\numberwithin{equation}{section}
\numberwithin{figure}{section}
\theoremstyle{plain}
\newtheorem{thm}{\protect\theoremname}[section]
  \theoremstyle{definition}
  \newtheorem{defn}[thm]{\protect\definitionname}
  \theoremstyle{plain}
  \newtheorem{prop}[thm]{\protect\propositionname}
  \theoremstyle{remark}
  \newtheorem{rem}[thm]{\protect\remarkname}
  \theoremstyle{plain}
  \newtheorem{lem}[thm]{\protect\lemmaname}
  \theoremstyle{remark}
  \newtheorem{claim}[thm]{\protect\claimname}
  \theoremstyle{plain}
  \newtheorem{cor}[thm]{\protect\corollaryname}
  \theoremstyle{definition}
  \newtheorem{example}[thm]{\protect\examplename}
\newcommand{\set}[1]{\left\{#1\right\}}
\newcommand{\qg}{{\mathbb G}}
\renewcommand{\a}{\alpha}
\newcommand{\f}{\varphi}
\newcommand{\nm}[1]{\|#1\|}
\newcommand{\irr}{\mathrm{Irr}(\mathbb{G})}
\DeclareMathOperator{\tr}{Tr}
  \providecommand{\claimname}{Claim}
  \providecommand{\corollaryname}{Corollary}
  \providecommand{\definitionname}{Definition}
  \providecommand{\examplename}{Example}
  \providecommand{\lemmaname}{Lemma}
  \providecommand{\propositionname}{Proposition}
  \providecommand{\remarkname}{Remark}
\providecommand{\theoremname}{Theorem}
\begin{document}

\title{$L_{p}$-improving convolution operators on finite quantum groups}

\author{Simeng Wang}

\address{Laboratoire de Mathématiques, Université de Franche-Comté, 25030
Besançon Cedex, France and Institute of Mathematics, Polish Academy
of Sciences, ul. \'{S}niadeckich 8, 00-956 Warszawa, Poland}

\email{simeng.wang@univ-fcomte.fr}

\subjclass[2010]{Primary: 20G42, 46L89. Secondary: 43A22, 46L30, 46L51.}

\keywords{$L_{p}$-improving operators, compact quantum groups, positive convolution
operators.}
\begin{abstract}
We characterize positive convolution operators on a finite
quantum group $\mathbb{G}$ which are $L_{p}$-improving. More precisely,
we prove that the convolution operator $T_{\varphi}:x\mapsto\varphi\star x$
given by a state $\varphi$ on $C(\mathbb{G})$ satisfies 
\[
\exists1<p<2,\quad\|T_{\varphi}:L_{p}(\mathbb{G})\to L_{2}(\mathbb{G})\|=1
\]
if and only if the Fourier series $\hat{\varphi}$ satisfy $\|\hat{\varphi}(\alpha)\|<1$
for all nontrivial irreducible unitary representations $\alpha$, and if
and only if the state $(\varphi\circ S)\star\varphi$ is non-degenerate
(where $S$ is the antipode). We also prove that these $L_{p}$-improving
properties are stable under taking free products, which gives a method
to construct $L_{p}$-improving multipliers on infinite compact quantum
groups. Our methods for non-degenerate states yield a general formula for computing idempotent 
states associated to Hopf images, which generalizes earlier work of Banica, 
Franz and Skalski. 
\end{abstract}
\maketitle

\section*{Introduction}

The convolution operators or multipliers constitute a central part of Fourier analysis. One among phenomena studied on the
circle group $\mathbb{T}$ is the existence and behavior of positive
Borel measures that convolve $L_{p}(\mathbb{T})$ into $L_{q}(\mathbb{T})$
with finite $q>p$ for a given $1<p<\infty$, which are considered
to be $L_{p}$-improving measures. An example due to Oberlin
\cite{oberlin1982convolution} is the Cantor-Lebesgue measure
supported by the usual middle-third Cantor set. Oberlin revealed that,
after a careful analysis on the structure of this measure, this result
can be reduced to proving that there exists $p<2$ such that 
\[
\|\mu\star f\|_{2}\leq\|f\|_{p},\quad f\in L_{p}(\mathbb{Z}/3\mathbb{Z})
\]
where the $L_{p}$-norms are those taken with respect to the normalized
counting measure on the cyclic group $\mathbb{Z}/3\mathbb{Z}=\{0,1,2\}$
with three elements and $\mu$ is the probability measure with mass $1/2$ at
$0$ and at $2$. Motivated by these results, Ritter
showed in \cite{ritter1984convolution} that, if $G$ is an arbitrary
finite group and $T_{\mu}:f\mapsto\mu\star f$ is the convolution
operator associated to a probability measure $\mu$ on $G$, then
\[
\left(\exists\, p<2,\ \|T_{\mu}:L_{p}(G)\to L_{2}(G)\|=1\right)\ \Leftrightarrow\ G=\langle ij^{-1}:i,j\in\mathrm{supp}\,\mu\rangle,
\]
which provides a more general method to construct $L_{p}$-improving
measures on groups.

In this paper we give an alternative approach related to these topics,
in the context of quantum groups and noncommutative $L_{p}$-spaces.
We show that, for a finite quantum group $\mathbb{G}$ and a state
$\varphi$ on $C(\qg)$, denoting by $\hat{\varphi}$ the Fourier
series of $\varphi$ and writing $\psi=(\f\circ S)\star\f$, $S$
being the antipode, the following assertions are equivalent (Theorem
\ref{cor:convolution_quantul}): 
\begin{enumerate}[label=(\arabic{enumi})]
\item there exists $1<p<2$ such that, 
\[
\forall\ x\in C(\qg),\ \nm{\f\star x}_{2}\leq\nm{x}_{p}\ ;
\]

\item $\nm{\hat{\f}(\a)}<1$ for all $\a\in\irr\setminus\{1\}$ ; 
\item For any nonzero $x\in C(\qg)_{+}$, there exists $n\geq1$ such that
$\psi^{\star n}(x)>0$. 
\end{enumerate}
The last assertion should be interpreted
as claiming that the ``support'' of $\varphi$ ``generates'' the
quantum group $\mathbb{G}$, which will be explained in the last section.
We will illustrate by example in Remark \ref{rem:finiteness is crucial}
that the finiteness condition in the above conclusion is rather crucial
and cannot be removed.

In particular, the result characterizes the Fourier-Schur multipliers
on finite groups which have an $L_{p}$-improving property.
Let $\Gamma$ be a finite group and $\varphi$ be a positive definite
function on $\Gamma$. Let $M_{\varphi}$ be the associated Fourier-Schur
multiplier operator determined by $M_{\varphi}(\lambda(\gamma))=\varphi(\gamma)\lambda(\gamma)$
for all $\gamma\in\Gamma$. Then 
\[
\exists1<p<2,\quad\|M_{\varphi}x\|_{2}\leq\|x\|_{p},\quad x\in C^{*}(\Gamma)
\]
if and only if $|\varphi(\gamma)|<1$ for any $\gamma\in\Gamma\setminus\{e\}$.

We should emphasize that our argument relies essentially on new
and interesting properties on the unital trace preserving operators
on noncommutative $L_{p}$-spaces, based on the recent work of Ricard and Xu \cite{ricardxu2014convexLp}. In fact, the following
fact proved in Theorem \ref{Lp improvement_spectral gap_strong version}
plays a key role in our argument. For a finite dimensional
C{*}-algebra $A$ equipped with a faithful tracial state $\tau$, and $T:A\to A$ a unital trace preserving map, the $L_{p}$-improving
property
\begin{equation}
\exists1<p<2,\quad\|T:L_{p}(A)\to L_{2}(A)\|=1\label{eq:Lp improving}
\end{equation}
holds if and only if we have the following ``spectral gap'':
\[
\sup_{\substack{x\in A\backslash\{0\},\tau(x)=0}
}\frac{\|Tx\|_{2}}{\|x\|_{2}}<1.
\]
We provide two proofs of this result, where one is based on very elementary arguments with an additional assumption of $2$-positivity and another, which is rather short, on \cite{ricardxu2014convexLp}. In Theorem \ref{thm:free prod Lp improving} we also show that the
$L_{p}$-improving property \eqref{eq:Lp improving} remains stable
under the free products. This method permits us to give $L_{p}$-improving
convolution operators for infinite
quantum groups.

In this paper we also include some simple properties of non-degenerate states on compact quantum groups with applications. We prove in Lemma \ref{lem:non degenerate} that the convolution Ces\`{a}ro limit of a non-degenerate state is the Haar state, which not only contributes to the proof of our main result, but also yields a generalization of \cite[Theorem 2.2]{banicafranzskalski2012inner} concerning the computation of idempotent states associated to Hopf images.

We end this introduction with a brief description of the organization
of the paper. Section \ref{sec:-improvement-and-spectral} deals with
the characterization of unital trace preserving $L_{p}$-improving
operators on finite dimensional C{*}-algebras and their free products. In Section \ref{sec:Fourier-analysis} we present some
preliminaries on compact quantum
groups and the related Fourier analysis. Here we give a short and explicit
calculation of Fourier series for compact quantum groups, parallel
to the case of classical compact groups, which does not exist in other
literature. In Section \ref{sec:convolution and hopf image} we obtain some properties of non-degenerate
states on a general compact quantum group.
The last Section \ref{sec:Some--improving-convolutions} is devoted
to the positive convolution operators on finite quantum groups, and
constructions of operators with similar properties on infinite compact
quantum groups by free product.

\section{$L_{p}$-improvement and spectral gaps\label{sec:-improvement-and-spectral}}

\subsection{Basic notions}
Let us firstly present some preliminaries and notations on noncommutative $L_p$-spaces and free products for later use. All the facts mentioned below are well-known.

\subsubsection{Noncommutative $L_{p}$-spaces}

Here we recall some basics of noncommutative $L_{p}$-spaces
on finite von Neumann algebras. We refer to \cite{takesaki2002opeI}
for the theory of von Neumann algebras and to \cite{pisierxu2003nclp}
for more information on noncommutative $L_{p}$-spaces. Let $\mathcal{M}$
be a finite von Neumann algebra equipped with a normal faithful tracial
state $\tau$. Let $1\leq p<\infty$. For each $x\in\mathcal{M}$,
we define
\[
\|x\|_{p}=\left[\tau(|x|^{p})\right]^{1/p}.
\]
One can show that $\|\|_{p}$ is a norm on $\mathcal{M}$. The completion
of $(\mathcal{M},\|\|_{p})$ is denoted by $L_{p}(\mathcal{M},\tau)$
or simply by $L_{p}(\mathcal{M})$. The elements of $L_{p}(\mathcal{M})$
can be described by densely defined closed operators measurable with
respect to $(\mathcal{M},\tau)$, as in the commutative case. For
convenience, we set $L_{\infty}(\mathcal{M})=\mathcal{M}$ equipped
with the operator norm. Since $|\tau(x)|\leq\|x\|_{1}$ for all $x\in\mathcal{M}$,
$\tau$ extends to a continuous functional on $L_{1}(\mathcal{M})$.
Let $1\leq p,q,r\leq\infty$ be such that $1/p+1/q=1/r$. If $x\in L_{p}(\mathcal{M})$
and $y\in L_{q}(\mathcal{M})$, then $xy\in L_{r}(\mathcal{M})$ and
the following Hölder inequality holds:
\[
\|xy\|_{r}\leq\|x\|_{p}\|y\|_{q}.
\]
In particular, if $r=1$, $|\tau(xy)|\leq\|xy\|_{1}\le\|x\|_{p}\|y\|_{q}$
for arbitrary $x\in L_{p}(\mathcal{M})$ and $y\in L_{q}(\mathcal{M})$.
This defines a natural duality between $L_{p}(\mathcal{M})$ and $L_{q}(\mathcal{M})$:
$\langle x,y\rangle=\tau(xy)$. For any $1\leq p<\infty$ we have
$L_{p}(\mathcal{M})^{*}=L_{q}(\mathcal{M})$ isometrically.

\subsubsection{Free products}

We firstly recall some constructions of free product of C{*}-algebras,
for which we refer to \cite{vdn1992freerandom} and \cite{nicaspeicher2006freeproba}
for details. Consider a family of unital C*-algebras
$(A_{i},\phi_{i})_{i\in I}$ with distinguished faithful states $\phi_{i}$
and associated GNS constructions $(\pi_{i},H_{i})$. Set $\mathring{A_{i}}=\ker\phi_{i}$ and $\mathring{a_{i}}=a_{i}-\phi_{i}(a_{i})1$
for each $i$ and $a_{i}\in A_{i}$. Construct a vector space
\begin{equation}
A=\mathbb{C}\mathbf{1}\oplus\bigoplus_{n\ge1}\Big(\bigoplus_{i_{1}\neq i_{2}\neq\cdots\neq i_{n}}\mathring{A}_{i_{1}}\otimes\mathring{A}_{i_{2}}\otimes\cdots\otimes\mathring{A}_{i_{n}}\Big).\label{algebraic free product}
\end{equation}

We equip $A$ with an algebra structure such that $\mathbf{1}$
is the identity and the multiplication of a letter $a\in\mathring{A_{i}}$
with an elementary tensor $a_{1}\otimes a_{2}\otimes\cdots\otimes a_{n}$
in $\mathring{A}_{i_{1}}\otimes\mathring{A}_{i_{2}}\otimes\cdots\otimes\mathring{A}_{i_{n}}$
is defined as 
\[
a\cdot(a_{1}\otimes a_{2}\otimes\cdots\otimes a_{n})=\left\{ \begin{array}{ll}
\ a\otimes a_{1}\otimes a_{2}\otimes\cdots\otimes a_{n}, & \textrm{if \ensuremath{i_{1}\neq i},}\\
\begin{array}{l}
\mathring{(aa_{1})}\otimes a_{1}\otimes a_{2}\otimes\cdots\otimes a_{n}\\
\quad+\phi_{i_{1}}(aa_{1})a_{2}\otimes\cdots\otimes a_{n},
\end{array} & \textrm{if \ensuremath{i_{1}=i}.}
\end{array}\right.
\]
Moreover, we give an involution on $A$ by 
\[
(a_{1}\otimes a_{2}\otimes\cdots\otimes a_{n})^{*}=a_{n}^{*}\otimes a_{2}^{*}\otimes\cdots\otimes a_{1}^{*}.
\]
In this sense $A$ becomes a $*$-algebra, and each $A_{i}$ can be
viewed as a $*$-subalgebra in $A$ by identifying $A_{i}$ with $\mathbb{C}\mathbf{1}\oplus\mathring{A}_{i}$
in the big direct sum. We call $A$ the \emph{algebraic free product}
of $(A_{i})_{i\in I}$.

It then can be shown that the algebra $A$ admits a faithful $*$-representation
$(\pi,H,\xi)$ such that $\pi|_{A_{i}}=\pi_{i}$ for each $i\in I$
and $\phi(\cdot)\coloneqq\langle\pi(\cdot)\xi,\xi\rangle$ restricted
on $A_{i}$ coincides with $\phi_{i}$. Moreover the state $\phi$
is faithful on $A$. Then the \emph{reduced C{*}-algebraic free product}
of $(A_{i})_{i\in I}$ is the C{*}-algebra generated by $\pi(A)$
in $B(H)$, i.e., the norm closure of $\pi(A)$ in $B(H)$, denoted
by $*_{i\in I}^{c_{0}}A_{i}$; and the state extends to $*_{i\in I}^{c_{0}}A_{i}$,
called the \emph{free product state} of $(\phi_{i})_{i\in I}$ and
denoted by $*_{i\in I}\phi_{i}$. If moreover each $A_{i}=\mathcal{M}_{i}$
is a von Neumann algebra and each $\phi_{i}$ is normal, then the
weak closure of $\pi(A)$ in $B(H)$, is defined to be the \emph{von
Neumann algebraic free product} of $(\mathcal{M}_{i})_{i\in I}$,
denoted by $\bar{*}_{i\in I}\mathcal{M}_{i}$, and the free product
state $\phi=*_{i\in I}\phi_{i}$ is also normal. Also, we remark that
if each $\phi_{i}$ is a tracial state, then $\phi=*_{i\in I}\phi_{i}$
is also tracial. 

Let $A_{i}$ and $B_{i}$ be unital C{*}-algebras with distinguished
faithful states $\phi_{i}$ and $\psi_{i}$ ($i\in I$) respectively,
and let $T_{i}:A_{i}\to B_{i}$ be a unital state preserving map for
each $i\in I$. Set $(A,\phi)=*_{i\in I}(A_{i},\phi_{i})$ and $(B,\psi)=*_{i\in I}(B_{i},\psi_{i})$.
Then it is obvious that 
\[
T(a_{1}a_{2}\cdots a_{n})=T_{i_1}(a_{1})\cdots T_{i_n}(a_{n})\qquad(a_{k}\in\mathring{A}_{i_{k}},\forall k,i_{1}\neq i_{2}\neq\cdots\neq i_{n})
\]
defines a unital state preserving map from the \emph{algebraic} free
products $(A,\phi)$ to $(B,\psi)$. We denote by $T=*_{i\in I}T_{i}$, and call
it the \emph{free product map} of the $T_{i}$. Similarly, we may define the \emph{c-free (conditionally free) product state} in the sense of Bo\.{z}ejko, Leinert and Speicher [BLS96]. Let $(A_{i},\phi_{i})$ be as above and let $\rho_i$ be further states respectively on $A_i$ for each $i$.  The conditional free product of $(\rho_i)_i$ is the functional $\omega:=*_{(\psi_i)} \rho_i$ on $(A,\phi)=*_{i\in I}(A_{i},\phi_{i})$  defined by the prescription $\omega(1) = 1$ and
\begin{equation*} \omega (a_1 \cdots a_n) = \rho_{i(1)} (a_1) \cdots \rho_{i(n)} (a_n)
\end{equation*}
for all $n \geq 1 $, $i(1)\neq \cdots \neq i(n)$ elements in $I$ and $a_j \in \ker \phi_{i(j)}$ for $j=1,\ldots,n$. It is shown in \cite[Theorem 2.2]{bozejkoleinertspeicher96condfree} that the conditional free product of states is again a state.

\subsection{$L_p$-improving operators}
Let $A$ be a finite dimensional C{*}-algebra equipped with a faithful
tracial state $\tau$. The associated noncommutative $L_{p}$-spaces will be denoted by $L_{p}(A)$. For a subset $E\subset A$,
we denote by $E_{+}$ the positive part of $E$. 

Recall that $A$ can be identified with a direct sum of matrix algebras,
that is, there exist some finite dimensional Hilbert spaces $H_{1},\ldots,H_{m}$
such that the following $*$-isomorphism holds 
\[
A\simeq B(H_{1})\oplus\cdots\oplus B(H_{m}).
\]
We will not distinguish the above two C{*}-algebras in the sequel.
For each $i\in\{1,\ldots,m\}$, let $\xi_{1}^{i},\ldots,\xi_{n_{i}}^{i}$
be an orthonormal basis for $H_{i}$, and define the operator $e_{pq}^{i}\in B(H_{i})$
by $e_{pq}^{i}(v)=\langle v,\xi_{q}^{i}\rangle_{H_{i}}\xi_{p}^{i}$
for all $v\in H_{i}$ and $p,q\in\{1,\ldots,n_{i}\}$. Take any $x=x_{1}\oplus\cdots\oplus x_{m}\in A$
with $x_{i}\in B(H_{i})$ for each $i\in\{1,\ldots,m\}$, and let
$\lambda_{1}^{i},\ldots,\lambda_{n_{i}}^{i}$ be the eigenvalues of
$|x_{i}|\in B(H_{i})$ ($1\leq i\leq m$) ranged in non-increasing
order and counted according to multiplicity. We can find a direct
sum of unitaries $u=u_{1}\oplus u_{2}\oplus\cdots\oplus u_{m}$ with
$u_{i}\in B(H_{i})$ for each $i$ such that $|x_{i}|(u_{i}\xi_{k}^{i})=\lambda_{k}^{i}(u_{i}\xi_{k}^{i})$
for all $k\in\{1,\ldots,n_{i}\}$ and $i\in\{1,\ldots,m\}$, that
is, $u^{*}|x|u=\sum_{i}\sum_{k=1}^{n_{i}}\lambda_{k}^{i}e_{kk}^{i}$.
If we write $\beta_{k}^{i}=\tau(e_{kk}^{i})\in[0,1]$ for $k\in\{1,\ldots,n_{i}\}$
and $i\in\{1,\ldots,m\}$, then the $L_{p}$-norm of $x$ for $1\leq p<\infty$
is
\begin{equation}
\|x\|_{p}^p=\tau(u^{*}|x|^{p}u)=\tau\left(\sum_{i=1}^{m}\sum_{k=1}^{n}(\lambda_{k}^{i})^{p}e_{kk}^{i}\right)=\sum_{i=1}^{m}\sum_{k=1}^{n}(\lambda_{k}^{i})^{p}\beta_{k}^{i}.\label{eq:L_p norm}
\end{equation}

We will prove in this section the result below. 
\begin{thm}
\label{Lp improvement_spectral gap}Let $A$ be a finite dimensional
C{*}-algebra equipped with a faithful tracial state $\tau$, and $T:A\to A$
be a unital $2$-positive trace preserving map on $A$. Then 
\[
\exists1\leq p<2,\quad\nm{Tx}_{2}\leq\nm{x}_{p},\ x\in A
\]
if and only if 
\begin{equation}
\sup_{\substack{x\in A\backslash\{0\},\tau(x)=0}
}\frac{\|Tx\|_{2}}{\|x\|_{2}}<1.\label{eq:spetral gap}
\end{equation}
\end{thm}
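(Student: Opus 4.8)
The plan is to prove the two directions separately, with the nontrivial content lying in the ``spectral gap implies $L_p$-improving'' direction. First I would handle the easy direction: suppose the $L_p$-improving bound $\|Tx\|_2 \le \|x\|_p$ holds for some $1 \le p < 2$. For $x$ with $\tau(x) = 0$ I want to compare $\|x\|_p$ with $\|x\|_2$, but H\"older runs the wrong way (we have $\|x\|_p \le \|x\|_2$ for $p < 2$), so a naive estimate is not enough to get a strict gap. The point is that the constant function $\un$ saturates neither bound: $T$ fixes $\un$ since it is unital and trace-preserving, and on the orthogonal complement $\{\tau(x)=0\}$ one exploits that $p < 2$ strictly. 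I would argue by contradiction, taking a sequence $x_n$ with $\tau(x_n)=0$, $\|x_n\|_2 = 1$, and $\|Tx_n\|_2 \to 1$; by finite-dimensionality pass to a limit $x$ with $\|x\|_2 = 1$, $\tau(x)=0$, $\|Tx\|_2 = 1$. Then $1 = \|Tx\|_2 \le \|x\|_p \le \|x\|_2 = 1$ forces $\|x\|_p = \|x\|_2$, which by the strict convexity implicit in the norm formula \eqref{eq:L_p norm} (equality in H\"older/interpolation for $p<2$) forces $|x|$ to be a multiple of a projection of full trace, i.e.\ $x$ is a scalar multiple of a unitary with all singular values equal; combined with $\tau(x)=0$ this should yield a contradiction with the trace-zero and norm-one conditions.

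For the main direction, assume the spectral gap \eqref{eq:spetral gap}, say the supremum equals $c < 1$. I want to interpolate. The natural idea is to write $L_p \to L_2$ as lying between the trivial bound $\|T : L_2 \to L_2\| = 1$ and some estimate that captures the gap, then use complex (or real) interpolation of the family of norms in $p$. The key structural input is the decomposition $A = \mathbb{C}\un \oplus A_0$ where $A_0 = \{\tau(x) = 0\}$, with $T$ fixing $\un$ and mapping $A_0$ into itself with $\|T|_{A_0} : L_2 \to L_2\| = c < 1$. For $x = \tau(x)\un + x_0$ one has $Tx = \tau(x)\un + Tx_0$, and orthogonality in $L_2$ gives $\|Tx\|_2^2 = |\tau(x)|^2 + \|Tx_0\|_2^2 \le |\tau(x)|^2 + c^2\|x_0\|_2^2$. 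The task is to dominate the right-hand side by $\|x\|_p^2$ for some $p < 2$. I would try to establish, for $p$ close enough to $2$, a reverse-type inequality of the form $|\tau(x)|^2 + c^2\|x_0\|_2^2 \le \|x\|_p^2$, treating $p = 2$ (where both sides become $|\tau(x)|^2 + \|x_0\|_2^2 = \|x\|_2^2$ on the left overshoots unless $c$ absorbs the slack) as the boundary case and perturbing.

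The hard part will be making this perturbation rigorous: at $p = 2$ the desired inequality $|\tau(x)|^2 + c^2\|x_0\|_2^2 \le \|x\|_2^2$ is \emph{false} when $c < 1$ (it reads $|\tau(x)|^2 + c^2\|x_0\|_2^2 \le |\tau(x)|^2 + \|x_0\|_2^2$, which is true, so in fact it holds at $p=2$), so the real issue is to show the inequality survives for some $p$ strictly below $2$ with room to spare coming from the strict gap $c < 1$. I expect to exploit a uniform continuity/compactness argument: on the unit sphere $\{\|x\|_2 = 1\}$ the functions $p \mapsto \|x\|_p$ are continuous and at $p = 2$ satisfy $|\tau(x)|^2 + c^2\|x_0\|_2^2 \le 1 = \|x\|_2^2$ with a strict deficit $(1-c^2)\|x_0\|_2^2$ bounded below away from the pole $x = \un$; a compactness argument on the sphere then produces a single $p < 2$ uniformly. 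The delicate point is that near $x = \un$ the deficit $(1-c^2)\|x_0\|_2^2$ vanishes, so I must check that $\|x\|_p \to \|x\|_2$ also vanishes to the same order there, which is where the local structure of the $L_p$-norm near the identity and the $2$-positivity hypothesis presumably enter.

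Here I anticipate the $2$-positivity assumption is used to guarantee $T$ maps positive elements in a controlled way so that the comparison of $L_p$ and $L_2$ norms interacts correctly with $T$; alternatively, the paper's remark that a shorter proof follows from \cite{ricardxu2014convexLp} suggests that the convexity results there replace this hands-on perturbation by a clean interpolation statement. I would present the elementary $2$-positive argument first, isolating the near-identity estimate as the single technical lemma, and flag that the Ricard--Xu input removes the $2$-positivity hypothesis altogether.
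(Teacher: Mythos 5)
Your forward direction (``$L_p$-improving $\Rightarrow$ spectral gap'') breaks down at its final step, and this is a genuine gap, not a presentational one. The compactness argument producing an extremal $x$ with $\tau(x)=0$, $\|x\|_2=1$, $\|Tx\|_2=1$ is fine, and you correctly identify the equality case: $\|x\|_p=\|x\|_2$ for $1\le p<2$ and a faithful tracial state forces $|x|$ to be a scalar multiple of $1$, i.e.\ $x$ is a scalar multiple of a unitary. But this does \emph{not} contradict the trace-zero and norm-one conditions: trace-zero unitaries exist in abundance, e.g.\ $x=\mathrm{diag}(1,-1)$ in $\mathbb{M}_2$ with its normalized trace, or $x=(1,-1)$ in $\mathbb{C}^2$ with the uniform trace; such an $x$ satisfies $\tau(x)=0$ and $\|x\|_p=1$ for \emph{every} $p$, so all your norm identities hold simultaneously and no contradiction appears. (For what it is worth, the paper's own one-line treatment is loose at exactly the same spot: it asserts that equality in H\"older forces $x_0=\lambda 1$, whereas the correct conclusion is only $|x_0|=\lambda 1$.) To close the gap one must feed the hypothesis $\|T\cdot\|_2\le\|\cdot\|_p$ further test elements beyond the extremal unitary $u$ itself. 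Two ways that work: (i) using the $2$-positivity hypothesis, $\|Tu\|_2=\|u\|_2$ forces equality in the Schwarz inequality $T(u)^*T(u)\le T(u^*u)$ (by faithfulness of $\tau$), so $u$ lies in the multiplicative domain of $T$; hence $T$ restricts to a $*$-homomorphism on the commutative C*-algebra generated by $u$ and sends some spectral projection $q$ of $u$ with $0<\tau(q)<1$ to a projection of the same trace, whence $\|Tq\|_2=\tau(q)^{1/2}>\tau(q)^{1/p}=\|q\|_p$, a contradiction; or (ii) without any positivity, test on $1+te^{i\theta}u$: one computes $\|T(1+te^{i\theta}u)\|_2^2=1+t^2$ while $\|1+te^{i\theta}u\|_p^2=1+t^2+\frac{p-2}{4}t^2\,\tau\bigl((e^{i\theta}u+e^{-i\theta}u^*)^2\bigr)+O(t^3)$, and a suitable $\theta$ gives $\tau\bigl((e^{i\theta}u+e^{-i\theta}u^*)^2\bigr)=2+2|\tau(u^2)|>0$, contradicting the hypothesis for small $t$ since $p<2$.

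Your converse direction is, in outline, correct and completable, and it is an instructive hybrid of the paper's two proofs. The skeleton --- $A=\mathbb{C}1\oplus A_0$, the identity $\|Tx\|_2^2=|\tau(x)|^2+\|Tx_0\|_2^2\le|\tau(x)|^2+c^2\|x_0\|_2^2$, and the target $|\tau(x)|^2+c^2\|x_0\|_2^2\le\|x\|_p^2$ --- is exactly the paper's short proof (Theorem \ref{Lp improvement_spectral gap_strong version}), which obtains the target in one line from the Ricard--Xu inequality $\|x\|_p^2\ge|\tau(x)|^2+(p-1)\|x-\tau(x)1\|_p^2$ together with the fact that the best constant $c_p$ in $\|\cdot\|_2\le c_p\|\cdot\|_p$ tends to $1$ as $p\to2$. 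Your proposed substitute for that input --- compactness on the sphere plus a second-order expansion near the scalars --- is in substance the paper's elementary proof (its functions $F$ and $G$), and the delicate point you flag does resolve as you predict: for trace-zero $y$ one has $\|1+y\|_2^2-\|1+y\|_p^2=\frac{2-p}{4}\tau((y+y^*)^2)+O(\|y\|_2^3)\le(2-p)\|y\|_2^2+O(\|y\|_2^3)$, which is dominated by $(1-c^2)\|y\|_2^2$ once $2-p<1-c^2$ and $\|y\|_2$ is small, while away from the scalars $\sup_{\|x\|_2=1}(\|x\|_2^2-\|x\|_p^2)\le1-c_p^{-2}\to0$. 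One correction, though: $2$-positivity is not what makes the near-identity step work. Since $x\mapsto\tau((x^*x)^{p/2})$ is smooth near the invertible point $1$, the expansion is valid for arbitrary complex perturbations, so your full-sphere argument needs no positivity at all --- consistent with the paper's strong version, which drops $2$-positivity entirely. In the paper's elementary proof, $2$-positivity enters for a different reason: via Lemma \ref{lem:positive reduction} it reduces the computation of $\|T:L_p\to L_2\|$ to positive elements, so that the Taylor analysis can be confined to positive trace-one elements.
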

\begin{rem}
Equivalently we can rewrite the above condition \eqref{eq:spetral gap}
as 
\[
\sup_{\substack{x\in A\backslash\{0\},\tau(x)=0}
}\frac{\langle|T|x,x\rangle}{\|x\|_{2}^{2}}<1,
\]
which means exactly that the whole eigenspace of $|T|$ for the eigenvalue
$1$ is just $\mathbb{C}1$. In this sense we refer to the above inequality
as a spectral gap phenomenon of $T$. 
\end{rem}
Recall that the $L_{2}$-norms assert some differential properties. The following lemma is elementary.
\begin{lem}
Let $A$ be a C{*}-algebra with a state $\f$ and $T:A\to A$ be a
positive map on $A$. Let $O\subset A_{h}$ be an open set in
the space $A_{h}$ of all selfadjoint elements in $A$. The function
$f:O\ni x\mapsto\f((Tx)^{2})$ is infinitely (Fr\'{e}chet) differentiable in $O$
and for $x\in O$, $f'(x)=\f(TxT\cdot)+\f(T\cdot Tx)$, $f''\equiv2\f(T\cdot T\cdot)$,
$f^{(n)}\equiv0$, $n\geq3$. \end{lem}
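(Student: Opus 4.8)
The plan is to exploit the fact that $f$ is literally a degree-two polynomial in the self-adjoint variable, so that its Taylor expansion around any point is finite and can be read off by a single algebraic expansion. First I would note that since $T$ is positive and $x$ is self-adjoint, $Tx$ is self-adjoint, hence $(Tx)^2\geq 0$ and $f(x)=\f((Tx)^2)$ is a nonnegative real number; thus $f$ is a genuine real-valued function on the real Banach space $A_h$. For $x\in O$ and $h\in A_{h}$ small enough that $x+h\in O$, linearity of $T$ gives
\[
(T(x+h))^2=(Tx)^2+Tx\cdot Th+Th\cdot Tx+(Th)^2,
\]
and applying the linear functional $\f$ yields
\[
f(x+h)=f(x)+\big[\f(Tx\cdot Th)+\f(Th\cdot Tx)\big]+\f((Th)^2).
\]
This already displays the candidate first derivative $f'(x)=\f(TxT\cdot)+\f(T\cdot Tx)$ together with the candidate second-order term.

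To promote this to a statement about Fr\'{e}chet derivatives I would verify the required estimates, whose only inputs are that $\f$ is a state (hence positive and contractive) and that $T$, being a positive linear map on a C*-algebra, is automatically bounded. The bracketed term is linear in $h$ and bounded, since $|\f(Tx\cdot Th)+\f(Th\cdot Tx)|\leq 2\nm{Tx}\,\nm{Th}\leq 2\nm{T}^{2}\nm{x}\,\nm{h}$; and the remainder obeys $0\leq\f((Th)^2)\leq\nm{(Th)^2}\leq\nm{T}^{2}\nm{h}^{2}=o(\nm{h})$. Hence $f$ is Fr\'{e}chet differentiable at $x$ with derivative exactly $f'(x)$ as stated. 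Differentiating once more is even cleaner: since $h\mapsto f'(x)(h)$ depends linearly on $x$, the increment $f'(x+k)(h)-f'(x)(h)=\f(Tk\cdot Th)+\f(Th\cdot Tk)$ is exactly linear in $k$ with zero remainder. This identifies the bounded symmetric bilinear form $f''(x)(k,h)=\f(Tk\cdot Th)+\f(Th\cdot Tk)$, which is independent of the base point $x$; on the diagonal it reads $f''(x)(h,h)=2\f((Th)^2)$, i.e. $f''\equiv 2\f(T\cdot T\cdot)$.

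Finally, because $f''$ is a constant bilinear form, its own derivative vanishes identically, so $f^{(n)}\equiv 0$ for all $n\geq 3$; the same conclusion is forced by the general principle that a bounded polynomial of degree $\leq 2$ on a Banach space is smooth with all derivatives past order two vanishing. I do not expect any genuine obstacle here: the entire content is the finite expansion above, and the only points demanding (minor) care are checking that $\f((Tx)^2)$ is real and recording the elementary boundedness and remainder estimates that justify passing from the formal expansion to honest Fr\'{e}chet derivatives.
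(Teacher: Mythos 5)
Your proof is correct, and it is precisely the elementary argument the paper has in mind: the paper states this lemma without proof, remarking only that it is elementary. Expanding $\varphi\big((T(x+h))^{2}\big)$ into a constant term, a bounded linear term, and an exactly quadratic remainder, then noting that the resulting second derivative is a constant symmetric bilinear form (so all derivatives of order $\geq 3$ vanish), is the standard route; your symmetrized form $f''(x)(k,h)=\varphi(Tk\,Th)+\varphi(Th\,Tk)$ is in fact the precise reading of the paper's loosely written $f''\equiv 2\varphi(T\cdot T\cdot)$, with which it agrees on the diagonal.
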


In general a norm estimate can be reduced to the argument on positive
cones. 
\begin{lem}[{{\cite[Remark 9]{ricardxu2014convexLp}}}]
\label{lem:positive reduction}Let $\mathcal{M}$ be a von Neumann
algebra and $T:L_{p}(\mathcal{M})\to L_{q}(\mathcal{M})$ be a bounded
linear map for $1\leq p,q\leq\infty$. Assume that $T$ is $2$-positive
in the sense that $\mathrm{Id}_{\mathcal{\mathbb{M}}_{2}}\otimes T$
maps the positive cone of $L_{p}(\mathbb{M}_{2}\otimes\mathcal{M})$
to that of $L_{q}(\mathcal{\mathbb{M}}_{2}\otimes\mathcal{M})$. Then

\[
\|Tx\|_{q}\leq\|T(|x|)\|_{q}^{1/2}\|T(|x^{*}|)\|_{q}^{1/2},\quad x\in L_{p}(\mathcal{M}).
\]
Consequently, 
\[
\|T\|=\sup\{\|Tx\|_{q}:x\in L_{p}(\mathcal{M})_{+},\|x\|_{p}\leq1\}.
\]

\end{lem}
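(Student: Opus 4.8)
The plan is to deduce the inequality from a single Cauchy--Schwarz estimate for positive $2\times2$ matrices over $L_q$, and to obtain the ``Consequently'' part as a formal corollary. First I would use the polar decomposition $x=u|x|$ of $x\in L_p(\mathcal M)$, with $u\in\mathcal M$ a partial isometry and $|x^*|=u|x|u^*$, to write
\[
\begin{pmatrix}|x^*| & x\\ x^* & |x|\end{pmatrix}=\begin{pmatrix}u\\ 1\end{pmatrix}|x|\begin{pmatrix}u^* & 1\end{pmatrix}\ge0
\]
as a positive element of $L_p(\mathbb M_2\otimes\mathcal M)$. Applying the $2$-positivity hypothesis, i.e.\ that $\mathrm{Id}_{\mathbb M_2}\otimes T$ preserves positive cones, the entrywise image
\[
Y:=\begin{pmatrix}T(|x^*|) & Tx\\ T(x^*) & T(|x|)\end{pmatrix}\ge0
\]
is positive in $L_q(\mathbb M_2\otimes\mathcal M)$; in particular $T(|x^*|),T(|x|)\ge0$, and since $Y$ is self-adjoint one automatically has $T(x^*)=(Tx)^*$.

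The crux --- and the step I expect to be the real obstacle --- is the purely $L_q$-level fact that any positive matrix $\begin{pmatrix}a & b\\ b^* & c\end{pmatrix}\ge0$ in $L_q(\mathbb M_2\otimes\mathcal M)$ satisfies $\|b\|_q\le\|a\|_q^{1/2}\|c\|_q^{1/2}$; applied to $Y$ this is exactly the claimed bound $\|Tx\|_q\le\|T(|x|)\|_q^{1/2}\|T(|x^*|)\|_q^{1/2}$. I would prove this matrix Cauchy--Schwarz by duality. Fixing $w\in L_{q'}(\mathcal M)$ with $\|w\|_{q'}\le1$ and polar decomposition $w=v|w|$, and a parameter $t>0$, I form the test element
\[
W_t=\begin{pmatrix}t^{-1}|w^*| & -w\\ -w^* & t\,|w|\end{pmatrix}=\begin{pmatrix}-t^{-1/2}v\\ t^{1/2}\end{pmatrix}|w|\begin{pmatrix}-t^{-1/2}v^* & t^{1/2}\end{pmatrix}\ge0,
\]
which is positive in $L_{q'}(\mathbb M_2\otimes\mathcal M)$. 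Since pairing two elements from the positive cones of $L_q$ and $L_{q'}$ through the trace $\mathrm{Tr}_2\otimes\tau$ is nonnegative, computing the diagonal of the product $YW_t$ and using H\"older together with $\|w\|_{q'}=\||w|\|_{q'}=\||w^*|\|_{q'}$ gives
\[
2\,\mathrm{Re}\,\tau(bw^*)\le t^{-1}\|a\|_q+t\,\|c\|_q.
\]
Minimising the right-hand side over $t>0$ and then rotating $w\mapsto e^{i\theta}w$ yields $|\tau(bw^*)|\le\|a\|_q^{1/2}\|c\|_q^{1/2}$, and taking the supremum over $w$ in the unit ball of $L_{q'}(\mathcal M)$ produces $\|b\|_q\le\|a\|_q^{1/2}\|c\|_q^{1/2}$.

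For the ``Consequently'' part I would simply observe that the main inequality gives $\|Tx\|_q\le\max\{\|T(|x|)\|_q,\|T(|x^*|)\|_q\}$, while $|x|$ and $|x^*|$ are positive with $\||x|\|_p=\||x^*|\|_p=\|x\|_p$ because the adjoint is an isometry of $L_p$ of a tracial von Neumann algebra. Hence whenever $\|x\|_p\le1$ both quantities on the right are dominated by $\sup\{\|Ty\|_q:y\in L_p(\mathcal M)_+,\ \|y\|_p\le1\}$; taking the supremum over all $x$ with $\|x\|_p\le1$ shows $\|T\|\le\sup\{\|Ty\|_q:y\in L_p(\mathcal M)_+,\ \|y\|_p\le1\}$, the reverse inequality being trivial.

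The remaining points to check are routine consequences of the noncommutative $L_p$-theory recalled above: the existence of the polar decomposition and the identity $|x^*|=u|x|u^*$ in $L_p$, the legitimacy of the trace pairing $\mathrm{Tr}_2\otimes\tau(YW_t)\ge0$ (which follows by writing $Y=Y^{1/2}Y^{1/2}$ and applying H\"older so that $Y^{1/2}W_tY^{1/2}\in L_1$), and the boundary exponents $q=1,\infty$, where the duality formula $\|b\|_q=\sup\{|\tau(bw)|:\|w\|_{q'}\le1\}$ remains valid.
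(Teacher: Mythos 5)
The paper never proves this lemma: it is quoted directly from \cite[Remark 9]{ricardxu2014convexLp}, so your argument can only be compared with the proof in that reference rather than with anything in the text. Your proof is correct. Its first half --- writing $x=u|x|$, observing that
$\left(\begin{smallmatrix}|x^{*}| & x\\ x^{*} & |x|\end{smallmatrix}\right)=\left(\begin{smallmatrix}u\\ 1\end{smallmatrix}\right)|x|\left(\begin{smallmatrix}u^{*} & 1\end{smallmatrix}\right)\geq0$ and applying $\mathrm{Id}_{\mathbb{M}_{2}}\otimes T$ --- is exactly the standard opening move of Ricard--Xu. Where you genuinely diverge is in how you extract the Cauchy--Schwarz bound $\|b\|_{q}\leq\|a\|_{q}^{1/2}\|c\|_{q}^{1/2}$ from positivity of $\left(\begin{smallmatrix}a & b\\ b^{*} & c\end{smallmatrix}\right)$ in $L_{q}$: the usual route is the factorization $b=a^{1/2}kc^{1/2}$ with $k$ a contraction, after which H\"older gives the bound at once, whereas you run a duality argument with the test matrices $W_{t}$, minimize the affine bound over $t>0$, and rotate by $e^{i\theta}$. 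Both are valid; your route avoids invoking the factorization theorem for positive $2\times2$ operator matrices with possibly unbounded entries, at the price of needing the isometric trace dualities $\|b\|_{q}=\sup\{|\tau(bw^{*})|:\|w\|_{q'}\leq1\}$ (including the endpoints $q=1,\infty$) and the positivity of the pairing of the two positive cones, both of which you correctly flag and justify. One caveat worth noting: your argument, like the paper's conventions, is written in the tracial setting (polar decomposition of $\tau$-measurable operators, trace duality), while the lemma is nominally stated for an arbitrary von Neumann algebra; since the paper only defines $L_{p}$ for finite von Neumann algebras with faithful tracial states and applies the lemma to finite-dimensional algebras, this reading is the intended one, but the type III (Haagerup $L_{p}$) case would require reformulating the pairing. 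The ``Consequently'' part is handled correctly, since $\||x|\|_{p}=\||x^{*}|\|_{p}=\|x\|_{p}$ makes both factors on the right-hand side dominated by the supremum over the positive part of the unit ball.
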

Now we give the proof of the theorem. 
\begin{proof}[Proof of the theorem]
Assume firstly $1\leq p<2$ and $\nm{Tx}_{2}\leq\nm x_{p}$ for all
$x\in A$. Note that $\|Tx\|_2=\||T|x\|_2$. Observe that $T^*$ is also a positive trace preserving map on $A$, and hence so is $|T|$. We choose an element $x\in A$ such that $\tau(x)=0$ and $|T|x=\lambda x$, with
$$\lambda=\sup_{\substack{x\in A\backslash\{0\},\tau(x)=0}}\frac{\|Tx\|_{2}}{\|x\|_{2}}=\sup_{\substack{x\in A\backslash\{0\},\tau(x)=0}}\frac{\||T|x\|_{2}}{\|x\|_{2}}.$$
Since $|T|$ is a positive map on $A$ and $\lambda\in \mathbb R$, we may assume that $x=x^*$. For any self-adjoint element $y\in A$ and $1\leq q<\infty$, it is easy to compute that
$$ \left.\frac{d^2}{d\varepsilon^2}\nm{1+\varepsilon y}_q\right|_{\varepsilon=0}=(q-1)\tau(y^2)>0.$$
Note also that by assumption
$$ \|1+\lambda \varepsilon x\|_2\leq \|1+\varepsilon x\|_p,\quad \varepsilon> 0.$$
Then taking the second derivative at $\varepsilon=0$ we get $\lambda^2\leq (p-1)<1$, as desired.

Now we suppose \eqref{eq:spetral gap} holds. Set $\mathring{A}=\{x\in A\bigm|\tau(x)=0\}$
and take $\sigma=\{x\in A_{+}\bigm|\tau(x)=1\}=(1+\mathring{A})_{+}$
which is exactly the set of positive elements in the unit sphere of $L_{1}(A)$.
We first show that there exists $1\leq p<2$ and a neighborhood $U$
of $1$ such that 
\begin{equation}
\forall x\in U\cap\sigma,\ \nm{Tx}_{2}\leq\nm x_{p}.\label{eq:Lp improvement,positive cone in unit L_1}
\end{equation}
To begin with, we consider 
\[
F(x)=\nm{Tx}_{2}-\nm x_{2},\ x\in A_{+}.
\]
Using the previous lemma we see that $F$ is infinitely differentiable
at any $x\in A_{+}\setminus\{0\}$ and 
\begin{align*}
F'(x) & (y)=\nm{Tx}_{2}^{-1}\tau((Tx)(Ty))-\nm{x}_{2}^{-1}\tau(xy),\quad y\in A\\
F''(x) & (y_{1},y_{2})=-\nm{Tx}_{2}^{3}\tau((Tx)(Ty_{1}))\tau((Tx)(Ty_{2}))+\nm{Tx}_{2}^{-1}\tau((Ty_{1})(Ty_{2}))\\
 & \,\quad+\nm{x}_{2}^{-3}\tau(xy_{1})\tau(xy_{2})-\nm{x}_{2}^{-1}\tau(y_{1}y_{2}),\quad y_{1},y_{2}\in A.
\end{align*}
Since $T$ is unital and preserves the trace, it follows that for
$y\in\mathring{A}$, 
\[
F'(1)(y)=0,\quad F''(1)(y,y)=\nm{Ty}_{2}^{2}-\nm y_{2}^{2}.
\]
Then consider the second order Taylor expansion of $F$ at $1$.
We can find a $\delta_{1}>0$ such that for all $\nm{y}_{2}\leq\delta_{1}$,
$y\in\mathring{A}$, we have $1+y\in A_{+}$ and 
\begin{align*}
F(1+y) & =F(1)+F'(1)(y)+\frac{1}{2}F''(1)(y,y)+R_{1}(y)\\
 & =\frac{1}{2}(\nm{Ty}_{2}^{2}-\nm y_{2}^{2})+R_{1}(y),\quad R_{1}(y)=o(\nm y_{2}^{2}).
\end{align*}
Recall that by \eqref{eq:spetral gap}, $\nm{Ty}_{2}^{2}-\nm y_{2}^{2}<0$
for $y\in\mathring{A}$. Thus by continuity, 
\[
c\coloneqq\sup\{\nm{Ty}_{2}^{2}-\nm y_{2}^{2}:y\in\mathring{A},\nm y_{2}=1\}<0.
\]
Since the function $y \mapsto \nm{Ty}_{2}^{2}-\nm y_{2}^{2} $ is $2$-homogeneous, we get 
\[
\forall y\in\mathring{A},\quad\nm{Ty}_{2}^{2}-\nm y_{2}^{2}\leq c\nm y_{2}^{2}.
\]
Take $\delta_{0}\in(0,\delta_{1})$ such that 
\[
\forall y\in\mathring{A},\nm y_{2}\leq\delta_{0},\quad\frac{|R_{1}(y)|}{\nm y_{2}^{2}}<\frac{|c|}{4}.
\]
Then for $y\in\mathring{A}$, $\nm y_{2}\leq\delta_{0},$ 
\[
F(1+y)=\frac{1}{2}(\nm{Ty}_{2}^{2}-\nm y_{2}^{2})+R_{1}(y)\leq\frac{c}{4}\nm y_{2}^{2}.\leqno(*)
\]
On the other hand, consider 
\[
G(x)=\nm x_{2}-\nm{x}_{p},\quad x=1+y,y=y^{*}\in A,\nm y_{2}<\delta_{0}.
\]
Let $y=y^{*}\in A$ with $\nm y_{2}<\delta_{0}$, then by \eqref{eq:L_p norm}
we may take some $K\in\mathbb{N}$ and $\beta_{1},\ldots,\beta_{K}\in[0,1]$
such that the $L_{p}$-norm of $x=1+y$ for $1\leq p<\infty$ is exactly
\[
\nm{1+y}_{p}=\left(\sum_{i=1}^{K}\beta_{i}(1+\lambda_{i})^{p}\right)^{\frac{1}{p}}\leqno(**)
\]
where $(\lambda_{i})_{i}\subset\mathbb{R}$ is the list of eigenvalues
of $y$. So in order to estimate $G$, we consider the function $g$
on $\mathbb{R}^{K}$ defined as 
\[
g(\xi)=\left(\sum_{i=1}^{K}\beta_{i}(1+\xi_{i})^{2}\right)^{\frac{1}{2}}-\left(\sum_{i=1}^{K}\beta_{i}(1+\xi_{i})^{p}\right)^{\frac{1}{p}},\quad\xi=(\xi_{1},\ldots,\xi_{K})\in\mathbb{R}^{K}.
\]
A straightforward calculation gives 
\[
\frac{\partial g}{\partial\xi_{i}}(0)=0,\ \frac{\partial^{2}g}{\partial\xi_{i}\partial\xi_{j}}(0)=(p-2)\beta_{i}\beta_{j},\ \frac{\partial^{2}g}{\partial\xi_{i}^{2}}(0)=(2-p)(\beta_{i}-\beta_{i}^{2}),\ 1\leq i\neq j\leq K.
\]
So by the Taylor formula 
\[
g(\xi)=\frac{1}{2}\sum_{i}(2-p)(\beta_{i}-\beta_{i}^{2})\xi_{i}^{2}+\frac{1}{2}\sum_{j\neq k}(p-2)\beta_{j}\beta_{k}\xi_{j}\xi_{k}+R_{2}(\xi),\ R_{2}(\xi)=o(\nm\xi^{2}).
\]
If $2-\frac{|c|}{8}\leq p\leq2$ and $0<\delta<\delta_{0}$ is such
that $|R_{2}(\xi)|\leq\frac{|c|}{8}\sum_{i=1}^{K}\beta_{i}\xi_{i}^{2}$
whenever $\sum_{i=1}^{K}\beta_{i}\xi_{i}^{2}\leq\delta^{2}$,
then for any $\xi\in\mathbb{R}^{K}$ with $\sum_{i=1}^{K}\beta_{i}\xi_{i}^{2}\leq\delta^{2}$,
\[
|g(\xi)|\leq\frac{1}{2}(2-p)\sum_{i=1}^{K}(\beta_{i}-\beta_{i}^{2})\xi_{i}^{2}+\frac{1}{2}(2-p)\sum_{i=1}^{K}\beta_{i}^{2}\xi_{i}^{2}+\frac{|c|}{8}\sum_{i=1}^{K}\beta_{i}\xi_{i}^{2}<\frac{|c|}{4}\sum_{i=1}^{K}\beta_{i}\xi_{i}^{2}.
\]
This, together with $(**)$, implies that, putting $\lambda=(\lambda_{1},\ldots,\lambda_{K})$,
\[
G(1+y)=g(\lambda)\leq\frac{|c|}{4}\sum_{i=1}^{K}\beta_{i}\lambda_{i}^{2}=\frac{|c|}{4}\nm y_{2}^{2},\quad\nm y_{2}\leq\delta.
\]
Combined with $(*)$ we deduce 
\[
\nm{Tx}_{2}-\nm x_{p}=F(1+y)+G(1+y)\leq0,\ x=1+y,\ y\in\mathring{A},\ \nm y_{2}\leq\delta,
\]
for all $p\geq2-\frac{|c|}{8}\coloneqq p_{1}$. So $U=\{1+y\bigm|y=y^{*}\in A,\|y\|_{2}<\delta\}$
is the desired neighborhood in \eqref{eq:Lp improvement,positive cone in unit L_1}.

Now we can derive the inequality for all $x\in\sigma$. For $x\in\sigma\setminus U\subset(1+\mathring{A})_{+}\setminus\{1\}$,
we write $x=1+y$ with $y\in\mathring{A}\backslash\{0\}$ and then
by \eqref{eq:spetral gap} and the trace preserving property we have
$\nm{Tx}_{2}^{2}=1+\|Ty\|_{2}^{2}<1+\|y\|_{2}^{2}=\|x\|_{2}^{2}$.
Note also that $\sigma$ is compact, so we can find $M<1$ such that
$\nm{Tx}_{2}/\nm x_{2}<M$ for all $x\in\sigma\backslash U$. Given
$p<2$, let $C_{p}$ be the optimal constant for the inequality $\nm x_{2}\leq C_{p}\nm x_{p}$
for $x\in A$, then $C_{p}\to1$ when $p\to2$. Take $p_{0}\geq p_{1}$
such that $C_{p_{0}}\leq M^{-1}$. We get then 
\[
\forall x\in\sigma\backslash U,\quad\frac{\nm{Tx}_{2}}{\nm x_{p}}\leq MC_{p}\leq1,\quad p_{0}\leq p\leq2.
\]
As a result, for all $p\in[p_{0},2]$, it holds that 
\[
\nm{Tx}_{2}\leq\nm x_{p},\quad x\in\sigma.
\]
Since the norm is homogeneous and $T$ is $2$-positive, the above
inequality holds for all $x\in A$ as well. 
\end{proof}
Apart from the above elementary proof, we would like to give an alternative
simpler approach which yields a little bit stronger conclusion. The argument,
however, depends heavily on the following recent and deep result
on the convexity of $L_{p}$-spaces:
\begin{thm}[{\cite[Theorem 1]{ricardxu2014convexLp}}]
\label{lem:ricard_xu_conv}Let $\mathcal{M}$ be a von Neumann algebra
equipped with a faithful semifinite normal trace $\phi$. Let $\mathcal{N}$
be a von Neumann subalgebra such that the restriction of $\phi$ to
$\mathcal{N}$ is semifinite. Denote by $\mathcal{E}$ the unique
$\phi$-preserving conditional expectation from $\mathcal{M}$ onto
$\mathcal{N}$. For $1<p\leq2$, we have
\[
\|x\|_{p}^{2}\geq\|\mathcal{E}x\|_{p}^{2}+(p-1)\|x-\mathcal{E}x\|_{p}^{2},\qquad x\in L_{p}(\mathcal{M}).
\]
For $2<p<\infty$, the inequality is reversed.
\end{thm}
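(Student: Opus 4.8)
The plan is to fix $1<p\le 2$; the case $p=2$ is the Pythagorean identity, and the reversed inequality for $2<p<\infty$ will follow at the end by a duality argument pairing $L_p$ with $L_{p'}$, $p'\in(1,2)$, using that $\mathcal{E}$ is self-adjoint for the trace. First I would carry out the standard reductions: since $\mathcal{E}$ is a contraction on every $L_r(\mathcal{M})$ and the norms are continuous, one reduces the semifinite case to elements $x$ supported under a finite projection of $\mathcal{N}$, hence effectively to a finite von Neumann algebra with a tracial state, and to a weak-$*$ dense set of ``nice'' elements, recovering the general statement by approximation and lower semicontinuity. Writing $a=\mathcal{E}x\in L_p(\mathcal{N})$ and $b=x-\mathcal{E}x\in\ker\mathcal{E}$, the target becomes
\[
\|a+b\|_p^2\ \ge\ \|a\|_p^2+(p-1)\|b\|_p^2 .
\]
The key preliminary observation is that the first-order term drops out: the Fr\'{e}chet derivative of $y\mapsto\|y\|_p^2$ at $a$ is a multiple of $|a|^{p-1}\mathrm{sgn}(a)$, which is affiliated with $\mathcal{N}$, so its trace pairing with $b$ vanishes, $\tau(\xi b)=\tau(\xi\,\mathcal{E}b)=0$ for $\xi\in L_{p'}(\mathcal{N})$ (using $\tau\circ\mathcal{E}=\tau$ and the $\mathcal{N}$-bimodularity of $\mathcal{E}$). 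Thus the inequality is exactly a \emph{strong convexity} (Bregman) estimate for $\|\cdot\|_p^2$ in directions $b$ lying in $\ker\mathcal{E}$.

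The base case is the two-point inequality, i.e. the sharp $2$-uniform convexity of noncommutative $L_p$ (Ball--Carlen--Lieb): for $1<p\le2$ and $A,B\in L_p(\mathcal{M})$,
\[
\|A\|_p^2+(p-1)\|B\|_p^2\ \le\ \Big(\tfrac12\big(\|A+B\|_p^p+\|A-B\|_p^p\big)\Big)^{2/p}\ \le\ \tfrac12\big(\|A+B\|_p^2+\|A-B\|_p^2\big).
\]
I would obtain this by reducing the trace-norm statement to the elementary scalar two-point inequality via diagonalization and majorization (or an integral representation of $t\mapsto t^{p}$), as in the commutative sharp-convexity arguments. The parallelogram form on the right is precisely our target in the special case where $\mathcal{N}=\{y:\theta(y)=y\}$ for an involutive trace-preserving symmetry $\theta$, so that $\mathcal{E}=\tfrac12(\mathrm{id}+\theta)$, $a=\tfrac{x+\theta x}{2}$ and $b=\tfrac{x-\theta x}{2}$.

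The main step, and the crux of the whole argument, is to upgrade this two-point inequality to a general conditional expectation. In the commutative setting this is routine: one filters $\mathcal{N}\subset\cdots\subset\mathcal{M}$ into small steps and iterates the two-point estimate, using the $L_2$-type orthogonality of martingale differences. The hard part will be that this iteration has no direct noncommutative analogue --- martingale differences are not orthogonal in $L_p$, and a general trace-preserving conditional expectation is \emph{not} a group average --- so a genuinely operator-level argument is needed. The route I would pursue is twofold. First, whenever $\mathcal{N}$ is the fixed-point algebra of a compact group $G$ of trace-preserving automorphisms $(\alpha_g)$, so that $\mathcal{E}(x)=\int_G\alpha_g(x)\,dg$, I would integrate the two-point inequality against Haar measure: since each $\alpha_g$ is an $L_p$-isometry fixing $\mathcal{N}$, one has $\int_G\|\alpha_g x-\mathcal{E}x\|_p^2\,dg=\|x-\mathcal{E}x\|_p^2$, and a Jensen-type passage from two points to averages yields the claim. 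Second, I would treat an arbitrary $\mathcal{E}$ by approximating $\mathcal{N}$ by an increasing net of such ``averageable'' subalgebras and invoking martingale convergence together with the lower semicontinuity already used in the reductions; alternatively, one tries to establish directly that $\|\cdot\|_p^2$ satisfies the sharp $(p-1)$-strong-convexity estimate in all directions of $\ker\mathcal{E}$, which by the first paragraph settles every conditional expectation at once. I expect the delicate point to be verifying that the \emph{sharp} constant $(p-1)$ survives this noncommutative passage, since the operator structure obstructs the coordinatewise estimates that make the constant transparent classically.
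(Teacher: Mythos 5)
First, be aware that the paper itself does not prove this statement: it is quoted from Ricard--Xu \cite{ricardxu2014convexLp} and used as a black box (it is the engine behind Theorem \ref{Lp improvement_spectral gap_strong version} and Theorem \ref{thm:free prod Lp improving}). So your proposal has to be judged as an attempt at the Ricard--Xu theorem itself, and there it has a genuine gap. The preparatory material is fine: the reductions, the duality passage to $2<p<\infty$, the vanishing of the first-order term, and the identification of the symmetry case $\mathcal{E}=\frac{1}{2}(\mathrm{id}+\theta)$ with Ball--Carlen--Lieb are all correct. But what you call the ``main step'' is precisely the content of the theorem, and neither of your routes closes it. Take your group-average case $\mathcal{E}(x)=\int_G\alpha_g(x)\,dg$. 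The natural way to feed the two-point inequality the quantities you want is $A=\mathcal{E}x$, $B=\alpha_g x-\mathcal{E}x$, so that $A+B=\alpha_g x$ and $A-B=\alpha_g\bigl((2\mathcal{E}-1)x\bigr)$; since each $\alpha_g$ is an isometry fixing $\mathcal{E}x$, this gives, for every $g$,
\[
\|\mathcal{E}x\|_p^2+(p-1)\|x-\mathcal{E}x\|_p^2\;\le\;\tfrac{1}{2}\|x\|_p^2+\tfrac{1}{2}\|(2\mathcal{E}-1)x\|_p^2,
\]
and to conclude you would need $\|(2\mathcal{E}-1)x\|_p\le\|x\|_p$. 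That fails for $p\neq 2$: already on $L_p(\mathbb{T})$ with $\mathcal{E}$ the mean, taking $x=1_A$ with $|A|$ small yields $\|(2\mathcal{E}-1)x\|_p^p-\|x\|_p^p=2^p|A|^p\bigl(1+o(1)\bigr)>0$ for $1<p<2$. The reflection $2\mathcal{E}-1$ is an $L_p$-contraction essentially only in the symmetry case, i.e.\ exactly the case Ball--Carlen--Lieb already covers. The alternative pairing $A=\frac{1}{2}(\alpha_g x+\alpha_h x)$, $B=\frac{1}{2}(\alpha_g x-\alpha_h x)$, averaged over $g,h$ with Jensen, only produces the constant $(p-1)/4$. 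So the sharp constant does not survive any such averaging; this is not a ``delicate point to be verified'' at the end, it is the whole theorem.

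Your second route fares no better, for structural reasons: a trace-preserving conditional expectation is in general neither a Haar average of automorphisms nor approximable by such. The minimal counterexample is commutative: on $\mathbb{C}\oplus\mathbb{C}$ with the trace $\phi(x,y)=\frac{1}{4}x+\frac{3}{4}y$, the only trace-preserving automorphism is the identity, so the expectation onto the constants is not of the form $\int_G\alpha_g\,dg$, and the only ``averageable'' subalgebra is the whole algebra --- there is nothing to take a limit along; irreducible subfactors of non-group type give noncommutative examples with the same defect. Your fallback (``establish directly that $\|\cdot\|_p^2$ is $(p-1)$-strongly convex in all directions of $\ker\mathcal{E}$'') is a restatement of the theorem, not an argument. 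A smaller issue: the noncommutative two-point inequality itself cannot be obtained by ``diagonalization and majorization,'' since $A+B$ and $A-B$ do not commute; you should simply cite Ball--Carlen--Lieb rather than sketch a reproof. For the record, the actual proof in \cite{ricardxu2014convexLp} does not upgrade the two-point inequality at all: after reducing to the finite case it analyzes $t\mapsto\|\mathcal{E}x+t(x-\mathcal{E}x)\|_p^p$ directly, via differentiation combined with operator-monotonicity and integral-representation techniques, and the constant $p-1$ is extracted at that level. The theorem is best understood as the genuinely nontrivial extension of Ball--Carlen--Lieb from symmetries to arbitrary conditional expectations, which is why no soft averaging or approximation argument is known to yield it.
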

Immediately we may deduce Theorem \ref{Lp improvement_spectral gap} as follows. Note that the result below is slightly stronger than the statement of Theorem \ref{Lp improvement_spectral gap}.
\begin{thm}
\label{Lp improvement_spectral gap_strong version}Let $A$ be a finite
dimensional C{*}-algebra equipped with a faithful tracial state $\tau$,
and $T:A\to A$ be a unital trace preserving map on $A$. Then 
\begin{equation}\label{Lp improv operator condition}
\exists 1<p<2,\ \forall\ x\in A,\ \|Tx\|_{2}\leq\|x\|_{p}
\end{equation}
if and only if $$\lambda\coloneqq\sup_{\substack{x\in A\backslash\{0\},\tau(x)=0}
}\frac{\|Tx\|_{2}}{\|x\|_{2}}<1.$$
Moreover, if the above assertions are satisfied, then $$\lambda\leq c_p^{-1}\sqrt{p-1},\ \text{ where }c_p=\sup_{x\in A\setminus\{0\},\tau(x)=0}\frac{\|x\|_2}{\|x\|_p}.$$
\end{thm}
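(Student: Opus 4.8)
The plan is to prove both implications through the orthogonal splitting of $L_2(A)$ along $\mathbb{C}1$ and a single application of the Ricard--Xu convexity inequality (Theorem \ref{lem:ricard_xu_conv}) with $\mathcal{N}=\mathbb{C}1$. Write $\mathring{A}=\ker\tau$ and, for $x\in A$, $x_0=x-\tau(x)1\in\mathring{A}$. Since $T$ is unital and trace preserving, $Tx=\tau(x)1+Tx_0$ with $\tau(Tx_0)=\tau(x_0)=0$, so $Tx_0\perp 1$ in $L_2(A)$ and we get the Pythagorean identity $\|Tx\|_2^2=|\tau(x)|^2+\|Tx_0\|_2^2$. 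The $\tau$-preserving conditional expectation onto $\mathbb{C}1$ is $\mathcal{E}x=\tau(x)1$, so for $1<p\le2$ Theorem \ref{lem:ricard_xu_conv} reads $\|x\|_p^2\ge|\tau(x)|^2+(p-1)\|x_0\|_p^2$. This is precisely a lower bound on $\|x\|_p^2$ in terms of the $L_p$-size of the mean-zero part, which is exactly what must dominate $\|Tx\|_2^2$.

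For the sufficiency (together with the quantitative bound) I would assume $\lambda<1$ and chain estimates on the mean-zero part: $\|Tx_0\|_2\le\lambda\|x_0\|_2\le\lambda c_p\|x_0\|_p$, where the second step is the definition of $c_p$. Feeding this into the Pythagorean identity and comparing with Ricard--Xu gives $\|Tx\|_2^2\le|\tau(x)|^2+\lambda^2c_p^2\|x_0\|_p^2\le|\tau(x)|^2+(p-1)\|x_0\|_p^2\le\|x\|_p^2$ as soon as $\lambda^2c_p^2\le p-1$, i.e. $\lambda\le c_p^{-1}\sqrt{p-1}$. It then remains to produce an admissible exponent: since $c_p\ge1$ and, by compactness of the unit sphere of $\mathring{A}$ together with continuity of $(x,p)\mapsto\|x\|_2/\|x\|_p$, one has $c_p\to1$ as $p\to2^-$, the threshold $c_p^{-1}\sqrt{p-1}\to1>\lambda$; hence some $p\in(1,2)$ satisfies $\lambda\le c_p^{-1}\sqrt{p-1}$. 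This $p$ witnesses \eqref{Lp improv operator condition} and, by construction, satisfies the stated estimate, yielding the ``moreover'' clause.

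For the necessity I would fix $p\in(1,2)$ with \eqref{Lp improv operator condition} and test it on the affine line through $1$: for $w\in\mathring{A}$ the function $h(t)=\|1+tw\|_p^2-\|T(1+tw)\|_2^2$ is nonnegative for all real $t$ and vanishes at $t=0$, which is therefore a global minimum; moreover $h$ is smooth near $0$ because $1+tw$ is invertible there. Hence $h''(0)\ge0$. Using $\|T(1+tw)\|_2^2=1+t^2\|Tw\|_2^2$ and expanding $\|1+tw\|_p^2$ to second order, the case $w=w^*$ yields $\|Tw\|_2^2\le(p-1)\|w\|_2^2$; for general $w$ the expansion produces a cross term governed by $\operatorname{Re}\tau(w^2)$, which I would neutralize by replacing $w$ with $e^{i\theta}w$ (this changes neither $\|w\|_2$ nor $\|Tw\|_2$), leading to $\|Tw\|_2^2\le\tfrac{p}{2}\|w\|_2^2$. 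In either case the ratio is bounded by a constant strictly below $1$, so $\lambda<1$.

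The main obstacle, I expect, is organizational rather than technical: the whole argument hinges on recognizing that Ricard--Xu supplies exactly the ``reverse'' comparison (a lower bound for $\|x\|_p$) needed in the sufficiency direction, after which only the elementary limit $c_p\to1$ remains. In the necessity direction the one genuine subtlety is that, without any positivity hypothesis, $T$ need not be $*$-preserving, so one cannot reduce to selfadjoint $w$ for free; the rotation $w\mapsto e^{i\theta}w$ handles this, and although it degrades the constant from $\sqrt{p-1}$ to $\sqrt{p/2}$, this is immaterial since only the strict inequality $\lambda<1$ is needed there.
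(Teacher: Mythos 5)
Your proof is correct. The sufficiency half, including the ``moreover'' estimate, is exactly the paper's argument: the Pythagorean identity $\|Tx\|_2^2=|\tau(x)|^2+\|Tx_0\|_2^2$, the chain $\|Tx_0\|_2\le\lambda\|x_0\|_2\le\lambda c_p\|x_0\|_p$, Theorem \ref{lem:ricard_xu_conv} applied with $\mathcal{N}=\mathbb{C}1$, and the admissible exponent produced from $c_p\to1$ as $p\to2^-$. As in the paper, this establishes the ``moreover'' clause for the exponent so constructed, which is the reading the proof supports.

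Where you genuinely diverge is the necessity direction, and there your extra work is not redundant: it repairs a real gap in the paper's own proof. The paper disposes of necessity in one line, invoking the claim that $\|x\|_{p_1}<\|x\|_{p_2}$ for every nonzero trace-zero $x$ when $p_1<p_2$. That claim is false: equality in the relevant H\"older inequality holds whenever $|x|$ is a scalar multiple of $1$, not only when $x\in\mathbb{C}1$, and trace-zero elements of constant modulus exist --- e.g.\ the trace-zero unitary $x=\mathrm{diag}(1,-1)\in\mathbb{M}_2$ has $\|x\|_p=1$ for all $p$. Consequently, applying \eqref{Lp improv operator condition} directly to a trace-zero $x_0$ with $\|Tx_0\|_2=\|x_0\|_2$ yields only $\|x_0\|_p=\|x_0\|_2$, hence $|x_0|=c1$, which is not yet a contradiction; the paper's argument cannot exclude a spectral-gap violation by such an element. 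Your variational argument closes precisely this case: testing on $1+tw$, using $\|T(1+tw)\|_2^2=1+t^2\|Tw\|_2^2$ together with the expansion $\|1+tw\|_p^2=1+t^2\bigl[\tfrac{p}{2}\|w\|_2^2+(\tfrac{p}{2}-1)\operatorname{Re}\tau(w^2)\bigr]+o(t^2)$, and then rotating $w\mapsto e^{i\theta}w$ so that $\operatorname{Re}\bigl(e^{2i\theta}\tau(w^2)\bigr)\ge0$ (the coefficient $\tfrac{p}{2}-1$ being negative), you get the uniform bound $\|Tw\|_2\le\sqrt{p/2}\,\|w\|_2$ on all of $\mathring{A}$, hence $\lambda\le\sqrt{p/2}<1$, with no compactness argument and no positivity or $*$-preservation assumed on $T$. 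This is the same second-order Taylor technique the paper deploys in its first, elementary proof of Theorem \ref{Lp improvement_spectral gap}, but used where it is actually indispensable; what your route buys is a correct (and quantitative) necessity direction, at the cost only of the routine perturbation computation you identify as the technical step.
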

\begin{proof}
The necessity has been already proved in the proof of Theorem 1.1. Now, assume $\lambda <1$. Let $x\in A$ and $y=x-\tau(x)1$. Write $a=\tau(x)$.
Since $T$ is trace preserving, $\tau(Ty)=\tau(y)=0$. For $p\leq2$ we denote by $c_{p}$ the best constant with $\|\cdot\|_{2}\leq c_{p}\|\cdot\|_{p}$.
Then $(p-1)/c_{p}^{2}\to1$ when $p\to2$. Take $p<2$ such that $(p-1)/c_{p}^{2}>\lambda^{2}$,
then we have
\begin{align*}
\|Tx\|_{2}^{2} & =\|a1+Ty\|_{2}^{2}=|a|^{2}+\|Ty\|_{2}^{2}\leq|a|^{2}+\lambda^{2}\|y\|_{2}^{2}\\
 & \leq|a|^{2}+\lambda^{2}c_{p}^{2}\|y\|_{p}^{2}\leq|a|^{2}+(p-1)\|y\|_{p}^{2}\leq\|x\|_{p}^{2},
\end{align*}
whence \eqref{Lp improv operator condition}.
\end{proof}
\begin{rem}
Let $A$ be a finite
dimensional C{*}-algebra equipped with a faithful tracial state $\tau$,
and $T:A\to A$ be a unital trace preserving map on $A$. Consider the restriction of $T$ on the subspace $\{x\in A:\tau(x)=0\} $ of $A$ and its adjoint, then we see that 
$$\sup_{\substack{x\in A\backslash\{0\},\tau(x)=0}
}\dfrac{\|Tx\|_{2}}{\|x\|_{2}}=\sup_{\substack{x\in A\backslash\{0\},\tau(x)=0}
}\dfrac{\|T^{*}x\|_{2}}{\|x\|_{2}}.$$
Then the above theorem also implies that if there exists $1<p<2$ such that 
\[
\forall\ x\in A,\ \|Tx\|_{2}\leq\|x\|_{p},
\]
then
\[
\forall\ x\in A,\ \|T^*x\|_{2}\leq\|x\|_{p},
\]
and equivalently for $2<q<\infty$ with $1/p+1/q=1$, 
\[
\forall\ x\in A,\ \|Tx\|_{q}\leq\|x\|_{2}.
\]
\end{rem}
\medskip

It is easy to see that the free product of unital trace preserving completely positive maps can be extended to the $L_p$-spaces on using the interpolation between $L_1$ and $L_\infty$. But in general it is a delicate problem for the extension of algebraic free product of unital trace preserving maps onto the associated $L_p$-spaces. Here we provide a method to construct unital trace
preserving $L_{p}$-improving operators on the free product of finite-dimensional
C{*}-algebras. To see this we need the following trivial claim.
\begin{claim}
Let $\mathcal{M}$ be a finite von Neumann algebra equipped with a
faithful tracial state $\tau$. If the vectors $e_{1},\ldots,e_{m}\in\mathcal{M}$
are orthonormal in $L_{2}(\mathcal{M},\tau)$ and denote $c=\max_{1\leq k\leq m}\|e_{k}\|_{\infty}^{2}$,
then for $\alpha_{1},\ldots,\alpha_{m}\in\mathbb{C}$ and $2\leq q\leq \infty$,
\[
\|\sum_{k=1}^{m}\alpha_{k}e_{k}\|_{q}\leq(cm)^{\frac{1}{2}-\frac{1}{q}}\|\sum_{k=1}^{m}\alpha_{k}e_{k}\|_{2}.
\]
\end{claim}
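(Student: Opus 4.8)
The plan is to prove the inequality at the two endpoints $q=2$ and $q=\infty$ and then fill in the intermediate range by interpolation. At $q=2$ the factor $(cm)^{1/2-1/q}$ equals $1$, so the estimate there is a trivial equality and carries no content; all the work is concentrated at $q=\infty$, after which a single interpolation step finishes the proof.

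First I would record the one consequence of orthonormality that we need. Writing $f=\sum_{k=1}^{m}\alpha_{k}e_{k}$, the fact that the $e_{k}$ are orthonormal in $L_{2}(\mathcal{M},\tau)$ gives $\nm{f}_{2}^{2}=\sum_{k=1}^{m}|\alpha_{k}|^{2}$, and this is the only place the orthonormality enters. Next I would estimate the operator norm: by the triangle inequality in $\mathcal{M}$ together with the bound $\nm{e_{k}}_{\infty}\le\sqrt{c}$ coming from the definition of $c$, one gets $\nm{f}_{\infty}\le\sqrt{c}\sum_{k=1}^{m}|\alpha_{k}|$, and Cauchy--Schwarz then replaces $\sum_{k}|\alpha_{k}|$ by $\sqrt{m}\,\bigl(\sum_{k}|\alpha_{k}|^{2}\bigr)^{1/2}=\sqrt{m}\,\nm{f}_{2}$. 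Combining these yields $\nm{f}_{\infty}\le(cm)^{1/2}\nm{f}_{2}$, which is exactly the asserted inequality at $q=\infty$.

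Finally I would interpolate. For $2\le q\le\infty$ set $\theta=1-2/q$, so that $1/q=(1-\theta)/2+\theta/\infty$; the standard Hölder-type (log-convexity) interpolation of the noncommutative $L_{p}$-norms gives $\nm{f}_{q}\le\nm{f}_{2}^{1-\theta}\nm{f}_{\infty}^{\theta}$. Inserting the endpoint bound just obtained turns this into $\nm{f}_{q}\le(cm)^{\theta/2}\nm{f}_{2}$, and since $\theta/2=1/2-1/q$ this is precisely the claim.

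I do not anticipate any genuine obstacle, which matches the paper's description of the statement as trivial; the only point that deserves a word is the use of the interpolation inequality in the von Neumann setting. For a finite von Neumann algebra with a tracial state this is the familiar estimate $\nm{f}_{q}\le\nm{f}_{p_{0}}^{1-\theta}\nm{f}_{p_{1}}^{\theta}$ valid whenever $1/q=(1-\theta)/p_{0}+\theta/p_{1}$, which is an immediate consequence of the Hölder inequality already recalled in the preliminaries, so no additional machinery is required.
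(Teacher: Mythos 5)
Your proposal is correct and matches the paper's own proof essentially step for step: the paper likewise establishes the $q=\infty$ endpoint via the triangle inequality, the bound $\|e_k\|_\infty\le\sqrt{c}$, and Cauchy--Schwarz combined with orthonormality, and then handles $2\le q\le\infty$ by the H\"older (log-convexity) interpolation of the $L_p$-norms. No gaps; nothing further is needed.
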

\begin{proof}
Note that 
\begin{align*}
\|\sum_{k=1}^{m}\alpha_{k}e_{k}\|_{\infty} & \leq c^{1/2}\sum_{k=1}^{m}|\alpha_{k}|\leq c^{1/2}m^{1/2}\left( \sum_{k=1}^{m}|\alpha_{k}|^{2} \right) ^{1/2},
\end{align*}
which gives the claim for $q=\infty$. The inequality for $2\leq q\leq \infty$
then follows from the Hölder inequality.\end{proof}
\begin{thm}
\label{thm:free prod Lp improving}Let $(A_{i},\tau_{i})$, $1\leq i\leq n$
be a finite family of finite dimensional C{*}-algebras and set $(\mathcal{A},\tau)=\bar{*}_{1\leq i\leq n}(A_{i},\tau_{i})$
to be the von Neumann algebraic free product. For each $1\leq i\leq n$,
$T_{i}$ is a unital trace preserving map such that 
\[
\|T_{i}:L_{p}(A_{i})\to L_{2}(A_{i})\|=1
\]
for some $1<p<2$. Then the (algebraic) free product map $T=*_{1\leq i\leq n}T_{i}$
on $*_{1\leq i\leq n}A_{i}$ extends to a map such that
\[
\|T:L_{p'}(\mathcal{A})\to L_{2}(\mathcal{A})\|=1
\]
for some $1<p'<2$. \end{thm}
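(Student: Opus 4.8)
The plan is to reduce the statement to a spectral-gap condition via Theorem~\ref{Lp improvement_spectral gap_strong version} and then transport the mechanism of that proof to the (infinite-dimensional) free product, the new ingredient being the control of higher $L_{q}$-norms on homogeneous subspaces furnished by the preceding Claim. First, by Theorem~\ref{Lp improvement_spectral gap_strong version} the hypothesis $\nm{T_{i}:L_{p}(A_{i})\to L_{2}(A_{i})}=1$ is equivalent to the spectral gap $\lambda_{i}:=\sup\{\nm{T_{i}x}_{2}/\nm x_{2}:x\in A_{i}\setminus\{0\},\ \tau_{i}(x)=0\}<1$, so I set $\lambda=\max_{i}\lambda_{i}<1$. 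Recall that $L_{2}(\mathcal A)$ carries the orthogonal length grading into reduced words, and that the free product map $T$ --- and likewise its $L_{2}$-adjoint $T^{*}=*_{1\le i\le n}T_{i}^{*}$, each $T_{i}^{*}$ being again unital, trace preserving, with the same gap $\lambda_{i}$ --- preserves this grading, acting on a length-$n$ word as $T_{i_{1}}\otimes\cdots\otimes T_{i_{n}}$ on the trace-zero factors. Hence $T$ and $T^{*}$ contract the $L_{2}$-norm on the length-$n$ subspace by $\lambda^{n}$; in particular both are $L_{2}$-contractions whose gap on the trace-zero part is at most $\lambda$.

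Second, I would estimate the higher $L_{q}$-norms on each homogeneous piece. Fixing an $L_{2}$-orthonormal basis of each $\mathring A_{i}$, the reduced words of length $n$ form an $L_{2}$-orthonormal basis of the \emph{finite-dimensional} length-$n$ subspace; such a word has $L_{\infty}$-norm at most $\kappa^{n}$ for a constant $\kappa$ depending only on the $A_{i}$, and there are at most $\nu^{n}$ of them for a suitable $\nu$. The preceding Claim then yields, for every $z$ in the length-$n$ subspace and $q\ge2$,
\[
\nm z_{q}\le\rho^{\,n(\frac12-\frac1q)}\nm z_{2},\qquad \rho:=\kappa^{2}\nu .
\]
Combining this with the gap $\lambda^{n}$ and summing over $n$ by the triangle and Cauchy--Schwarz inequalities, I choose $q_{0}>2$ close enough to $2$ that $\lambda\,\rho^{\frac12-\frac1{q_{0}}}<1$; then $T^{*}$ (and $T$) extends to a bounded map $L_{2}(\mathcal A)\to L_{q_{0}}(\mathcal A)$ with $\nm{T^{*}:L_{2}\to L_{q_{0}}}\le C_{0}<\infty$. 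This already produces the sought extension, and it remains only to sharpen the constant.

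Third, to recover the optimal constant $1$ I would mimic the proof of Theorem~\ref{Lp improvement_spectral gap_strong version}, replacing the now-infinite comparison constant $c_{p}$ by an interpolation. Keeping the domain fixed equal to the Hilbert space $\mathring{L_{2}}=\{w:\tau(w)=0\}$ and interpolating only the target along the isometric $L_{p}$-scale between $\nm{T^{*}:\mathring{L_{2}}\to L_{2}}\le\lambda$ and $\nm{T^{*}:\mathring{L_{2}}\to L_{q_{0}}}\le C_{0}$, I obtain for $2<p'<q_{0}$ with $\tfrac1{p'}=\tfrac{1-\theta}{2}+\tfrac{\theta}{q_{0}}$,
\[
\nm{T^{*}:\mathring{L_{2}}\to L_{p'}}\le\lambda^{1-\theta}C_{0}^{\theta}\xrightarrow[\,p'\to 2^{+}\,]{}\lambda<1 .
\]
By duality this reads $\nm{Ty}_{2}\le\lambda^{1-\theta}C_{0}^{\theta}\,\nm y_{p}$ for trace-zero $y$, where $1/p+1/p'=1$. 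Since $\lambda^{1-\theta}C_{0}^{\theta}\to\lambda<1$ while $\sqrt{p-1}\to1$ as $p\to2^{-}$, for $p$ close enough to $2$ one has $\nm{Ty}_{2}\le\sqrt{p-1}\,\nm y_{p}$. Writing a general $x=\tau(x)1+y$ and invoking Theorem~\ref{lem:ricard_xu_conv} for the conditional expectation $\mathcal E(\cdot)=\tau(\cdot)1$ onto $\mathbb C1\subset\mathcal A$,
\[
\nm{Tx}_{2}^{2}=|\tau(x)|^{2}+\nm{Ty}_{2}^{2}\le|\tau(x)|^{2}+(p-1)\nm y_{p}^{2}\le\nm x_{p}^{2},
\]
so $\nm{T:L_{p}(\mathcal A)\to L_{2}(\mathcal A)}\le1$ on the dense algebraic free product, hence everywhere; the reverse inequality is immediate from $T1=1$. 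Thus $p':=p$ works.

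The main obstacle is precisely this passage from boundedness to the optimal constant $1$: in infinite dimensions Theorem~\ref{Lp improvement_spectral gap_strong version} cannot be quoted directly, because the constant $c_{p}$ there diverges, so the geometric gain $\lambda^{n}$ coming from the spectral gaps of the factors must be made to dominate the word-growth $\rho^{\,n(1/2-1/q)}$ from the Claim, and --- more delicately --- the final estimate must be organized (here by interpolating only the isometric target scale against the fixed Hilbert domain) so that the resulting operator norm tends to $\lambda<1$ and undercuts $\sqrt{p-1}\to1$. The routine points to verify along the way are that $T^{*}=*_{1\le i\le n}T_{i}^{*}$ shares the same gaps, that each length-$n$ subspace is finite-dimensional so that the Claim applies, and that the whole argument is carried out on the dense algebraic free product before extending by density.
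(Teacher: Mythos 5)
Your proof is correct, and while it shares the paper's skeleton --- reduction to the spectral gap $\lambda<1$ via Theorem \ref{Lp improvement_spectral gap_strong version}, passage to the adjoint $T^{*}=*_{i}T_{i}^{*}$ (unital, trace preserving, same gaps), the orthogonal word-length grading on which $T^{*}$ contracts by $\lambda^{r}$, the Claim's estimate $\|y_{r}\|_{q}\leq\rho^{r(1/2-1/q)}\|y_{r}\|_{2}$ on each homogeneous piece, Theorem \ref{lem:ricard_xu_conv}, and a final duality --- it diverges at the decisive step, namely how these ingredients are converted into an operator norm exactly $1$. The paper proves $\|T^{*}x\|_{q}\leq\|x\|_{2}$ outright for a single well-chosen $q>2$: Ricard--Xu on the $q>2$ side splits off $|\tau(x)|^{2}$, and then the triangle inequality plus Cauchy--Schwarz with the geometric weights $(q-1)^{-r}$ controls $\sum_{r}T^{*}x_{r}$, under the requirement $\lambda(cnm)^{1/2-1/q}\leq(q-1)^{-1}$. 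You instead settle first for a crude bound $\|T^{*}:L_{2}\to L_{q_{0}}\|\leq C_{0}$, shrink it to $\lambda^{1-\theta}C_{0}^{\theta}\to\lambda<1$ by interpolating only the target space against the fixed Hilbertian domain $\mathring{L}_{2}$, dualize, and then rerun the mechanism of Theorem \ref{Lp improvement_spectral gap_strong version} on the $p<2$ side, beating $\sqrt{p-1}\to1$. The paper's route is shorter and self-contained, needing no interpolation theory; yours additionally invokes the (standard, but not quoted in the paper) facts that $[L_{2},L_{q_{0}}]_{\theta}=L_{p'}$ isometrically and that interpolation against a fixed domain is multiplicative in the endpoint norms. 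What your route buys is robustness of the constants: the paper's final display requires $(q-1)\sum_{k\geq1}(q-1)^{-2k}\leq1$, i.e.\ $q\geq(3+\sqrt{5})/2$, while its weight condition forces $q\leq1+1/\lambda$ (since $cnm\geq1$), and these two constraints are incompatible once $\lambda>(\sqrt{5}-1)/2$; so the paper's explicit choice of $q$ is problematic for $\lambda$ near $1$, whereas your interpolation annihilates the crude constant $C_{0}$ through the exponent $\theta\to0$ and works uniformly for all $\lambda<1$. The routine verifications you flag (the gaps of the $T_{i}^{*}$, finite dimensionality of each homogeneous subspace so that the Claim applies, and carrying out the estimates on the dense algebraic free product before extending) are indeed exactly the right ones, and all hold.
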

\begin{proof}
By the previous theorem and remark, 
\begin{equation}
\lambda=\max_{1\leq i\leq n}\sup_{x\in \mathring A_i}
\dfrac{\|T_{i}x\|_{2}}{\|x\|_{2}}=\max_{1\leq i\leq n}\sup_{x\in \mathring A_i}
\dfrac{\|T_{i}^{*}x\|_{2}}{\|x\|_{2}}<1.\label{eq:s gap with adjoint}
\end{equation}
Consider $R=T^{*}$ and $R_{i}=T_{i}^{*}$ for all $1\leq i\leq n$,
then $R=R_{1}*\cdots*R_{n}$. By density, consider $x\in*_{1\leq i\leq n}(A_{i},\tau_{i})$
in the algebraic free product and we will show that 
\[
\|Rx\|_{q}\leq\|x\|_{2}
\]
for some $q>2$ independent of the choice of $x$. Now fix some $r\geq1$.
For each $i$, choose a family  $(e_{k}^{(i)})_{k=1}^{n_{i}}$ of eigenvectors of $|R_i|$ which forms an orthonormal
basis of $\mathring{A}_{i}$ under $\tau_{i}$, then $E_{r}=\{e_{\underline{k}}^{\underline{i}}=e_{k_{1}}^{(i_{1})}\cdots e_{k_{r}}^{(i_{r})}:1\leq k_{j}\leq n_{j},1\leq j\leq r,i_{1}\neq\cdots\neq i_{r}\}$
forms an orthonormal basis of $\oplus_{i_{1}\neq\cdots\neq i_{r}}\mathring{A}_{i_{1}}\otimes\cdots\otimes\mathring{A}_{i_{r}}$ which are also eigenvectors of $|R|$.
Note that $|E_{r}|\leq n^{r}m^{r}$ for $m=\max_{j}n_{j}$. Write
additionally $c=\max_{k,i}\|e_{k}^{(i)}\|_{\infty}^{2}$. Then for
any $y_{r}\in\oplus_{i_{1}\neq\cdots\neq i_{r}}\mathring{A}_{i_{1}}\otimes\cdots\otimes\mathring{A}_{i_{r}}$
the
above claim yields
\begin{equation}\label{Lq norm estimate of red words}
\|y_{r}\|_{q}\leq(cnm)^{r(\frac{1}{2}-\frac{1}{q})}\|y_{r}\|_{2}.
\end{equation}
Write $x=\tau(x)1+\sum_{r\geq1}x_{r}$ where $x_{r}\in\oplus_{i_{1}\neq\cdots\neq i_{r}}\mathring{A}_{i_{1}}\otimes\cdots\otimes\mathring{A}_{i_{r}}$.
Note that $\|Rx_{r}\|_{2}\leq\lambda^{r}\|x_{r}\|_{2}$ according
to \eqref{eq:s gap with adjoint} and the choice of $E_r$.
Together with Theorem \ref{lem:ricard_xu_conv} and \eqref{Lq norm estimate of red words},
\begin{align*}
\|Rx\|_{q}^{2} & \leq|\tau(x)|^{2}+(q-1)\|\sum_{r\geq1}Rx_{r}\|_{q}^{2}\leq|\tau(x)|^{2}+(q-1)\left(\sum_{r\geq1}\|Rx_{r}\|_{q}\right)^{2}\\
 & \leq|\tau(x)|^{2}+(q-1)\left(\sum_{r\geq1}(cnm)^{r(\frac{1}{2}-\frac{1}{q})}\|Rx_{r}\|_{2}\right)^{2}\\
 & \leq|\tau(x)|^{2}+(q-1)\left(\sum_{r\geq1}(cnm)^{r(\frac{1}{2}-\frac{1}{q})}\lambda^{r}\|x_{r}\|_{2}\right)^{2}.
\end{align*}
Observe that $(q-1)(cnm)^{\frac{1}{2}-\frac{1}{q}}$ tends to $1$ whenever
$q\to2$ and that $\lambda<1$, so we may choose $2<q<\infty$ such that
$\lambda(cnm)^{\frac{1}{2}-\frac{1}{q}}\leq(q-1)^{-1}$. For such a $q$ we
then have
\begin{align*}
\|Rx\|_{q}^{2} & \leq|\tau(x)|^{2}+(q-1)\left(\sum_{r\geq1}(q-1)^{-r}\|x_{r}\|_{2}\right)^{2}\\
 & \leq|\tau(x)|^{2}+(q-1)\sum_{k\geq1}(q-1)^{-2k}\sum_{r\geq1}\|x_{r}\|_{2}^{2}\\
 & <|\tau(x)|^{2}+\sum_{r\geq1}\|x_{r}\|_{2}^{2}=\|x\|_{2}^{2}.
\end{align*}
Take $1<p'<2$ such that $1/p'+1/q=1$. Then we get $\|T:L_{p'}(\mathcal{A})\to L_{2}(\mathcal{A})\|=1$.
\end{proof}

\section{\label{sec:Fourier-analysis}Preliminaries on quantum groups with Fourier analysis}

In this section we will do some preparations for discussing convolution operators in the quantum group framework. We will start with some preliminaries on compact quantum groups and then introduce the Fourier series in this setting.

\subsection{Compact quantum groups}

In this short paragraph we recall some basic definitions and properties of compact quantum
groups. All proofs of the facts mentioned below without references can
be found in \cite{woronowicz1998note} and \cite{maesvandaele1998note}.
\begin{defn}
Consider a unital C{*}-algebra $A$ and a unital $*$-homomorphism
$\Delta:A\to A\otimes A$ called \emph{comultiplication} on $A$ such that
$(\Delta\otimes\iota)\Delta=(\iota\otimes\Delta)\Delta$ and 
\[
\{\Delta(a)(1\otimes b):a,b\in A\}\quad\text{and}\quad\{\Delta(a)(b\otimes1):a,b\in A\} 
\]
are linearly dense in $A\otimes A$. Then $(A,\Delta)$ is called
a \emph{compact quantum group}. We denote $\mathbb{G}=(A,\Delta)$
and $A=C(\mathbb{G})$. We say that $\mathbb{G}$ is a \emph{finite}
quantum group if the space $A=C(\mathbb{G})$ is finite dimensional.\end{defn}

The following fact due to Woronowicz is fundamental in the quantum
group theory.
\begin{prop}
Let $\mathbb{G}$ be a compact quantum group. There exists
a unique state $h$ on $C(\mathbb{G})$ \emph{(}called the \emph{Haar
state} of $\mathbb{G}$\emph{)} such that for all $x\in C(\mathbb{G})$,
\[
(h\otimes\mathrm{\iota})\circ\Delta(x)=h(x)1=(\iota\otimes h)\circ\Delta(x).
\]

\end{prop}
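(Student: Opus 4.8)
The plan is to obtain $h$ as an ergodic (Ces\`{a}ro) average of the convolution powers of a faithful state and then to upgrade the resulting partial invariance to full invariance using the Podle\'{s} density (cancellation) conditions built into the definition of a compact quantum group; uniqueness will then be a one-line computation. Throughout I use the convolution of functionals $\varphi\star\psi=(\varphi\otimes\psi)\circ\Delta$, which is associative by coassativity of $\Delta$ and maps states to states since $\Delta$ is a unital $*$-homomorphism. The only soft input needed is that the state space $S(A)$ is convex and weak$^*$-compact, so that any sequence of states has a weak$^*$-cluster point which is again a state. Note also that $\star$ is separately weak$^*$-continuous in each variable against a fixed state, since $(\varphi\otimes\psi)\Delta(a)=\psi\big((\varphi\otimes\iota)\Delta(a)\big)=\varphi\big((\iota\otimes\psi)\Delta(a)\big)$ with the inner slices being fixed elements of $A$.

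\emph{Existence.} First I would fix a faithful state $\varphi$ on $A$ (available when $A$ is separable, in particular in the finite-dimensional case of primary interest; the general case is handled by a limiting argument over separable C$^*$-subalgebras). Form the Ces\`{a}ro means $\varphi_n=\frac1n\sum_{k=1}^n\varphi^{\star k}$ and let $h$ be a weak$^*$-cluster point. Since $\varphi\star\varphi_n-\varphi_n=\frac1n(\varphi^{\star(n+1)}-\varphi)\to0$ in norm, passing to the limit along a subnet and using separate weak$^*$-continuity gives $\varphi\star h=h\star\varphi=h$. Moreover $\varphi\star h=h$ forces $\varphi^{\star k}\star h=h$ for all $k$, hence $\varphi_n\star h=h$, and passing to the limit yields the idempotency $h\star h=h$.

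\emph{The main step.} The heart of the proof is to promote this single-state invariance to the full relations $(\iota\otimes h)\Delta(a)=h(a)1=(h\otimes\iota)\Delta(a)$. Writing $c=(\iota\otimes h)\Delta(a)$ for $a\ge0$, positivity of $h$ gives $c\ge0$, and the relations $\varphi\star h=h=h\star\varphi$ give $\varphi(c)=h(a)$. A Cauchy--Schwarz equality-case argument (for a faithful state, $\varphi(c)^2\le\varphi(c^2)\varphi(1)$ with equality only if $c$ is a scalar) reduces the claim to the identity $\varphi(c^2)=\varphi(c)^2=h(a)^2$. This last identity is where the genuinely quantum-group axioms must enter: one establishes it from the invariance relations for $h$ together with the multiplicativity of $\Delta$ and the density of $\Delta(A)(1\otimes A)$ and $\Delta(A)(A\otimes 1)$ in $A\otimes A$, which play the role of the classical translation-invariance of Haar measure. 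Once $c=h(a)1$ is known, this reads $\mu\star h=h$ for every state $\mu$, and since states separate points it is exactly the right-invariant identity; the left-invariant one is symmetric. I expect this density-driven promotion to be the main obstacle, since it is the only place the cancellation conditions are used in an essential way.

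\emph{Uniqueness.} This is immediate once bi-invariance is available. If $h'$ is any state satisfying the same invariance identities, then evaluating $(h\otimes h')\circ\Delta(x)$ by collapsing the first leg gives $h'\big(h(x)1\big)=h(x)$, while collapsing the second leg gives $h\big(h'(x)1\big)=h'(x)$; comparing the two yields $h(x)=h'(x)$ for all $x$, so $h=h'$.
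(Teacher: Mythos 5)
Your skeleton (Ces\`{a}ro averages of convolution powers, promotion of single-state invariance to bi-invariance, then the two-line uniqueness computation) is exactly the standard Woronowicz strategy, and the parts you actually carry out are fine: the cluster-point construction of a state $h$ with $\varphi\star h=h\star\varphi=h$ is correct, and so is the uniqueness argument. The genuine gap is precisely at what you yourself label ``the main step''. You correctly reduce full invariance to the identity $\varphi(c^2)=\varphi(c)^2$ for $c=(\iota\otimes h)\Delta(a)$ (equivalently $\varphi(x^*x)=0$ for $x=c-h(a)1$; your Cauchy--Schwarz equality-case remark is a valid repackaging of the endpoint), but then you merely assert that ``one establishes it'' from the density conditions and that you ``expect this density-driven promotion to be the main obstacle''. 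That expectation is not a proof, and the identity is not a soft consequence of density plus multiplicativity of $\Delta$: it is the entire content of Woronowicz's key lemma, and the argument that closes it is nontrivial. Concretely, writing $\Psi(y)=(h\otimes\iota)\Delta(y)-h(y)1$ (or the symmetric slice), one forms $q=(\Psi\otimes\iota)\Delta(a)$, which by coassociativity equals $\Delta(b)-1\otimes b$ with $b=(h\otimes\iota)\Delta(a)$; one expands $(\varphi\otimes h)(q^*q)$ into four terms and shows, using coassociativity together with \emph{both} relations $\varphi\star h=h$ and $h\star\varphi=h$, that all four terms equal $h(b^*b)$, so the expression vanishes; then one observes that $\mathcal{I}=\{q\in A\otimes A:(\varphi\otimes h)(q^*q)=0\}$ is a \emph{closed left ideal}, and uses the density of $(1\otimes A)\Delta(A)$ in $A\otimes A$ (together with boundedness of $\Psi\otimes\iota$, which needs $\Psi$ to be a difference of u.c.p.\ maps) to conclude $\Psi(a)\otimes1\in\mathcal{I}$, i.e.\ $\varphi(\Psi(a)^*\Psi(a))=0$, whence $\Psi(a)=0$ by faithfulness. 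Neither the four-term cancellation nor the left-ideal/density bootstrap appears in your proposal. Note that the paper itself does not reprove this proposition (it cites Woronowicz and Maes--Van Daele), but the argument you are missing is carried out in detail, in slightly greater generality, in the proof of the lemma opening Section \ref{sec:convolution and hopf image}; you should check your missing step against it.

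A secondary weakness: your handling of the case where no faithful state exists. The claim that the general case ``is handled by a limiting argument over separable C$^*$-subalgebras'' does not work as stated, because a separable C$^*$-subalgebra of $C(\mathbb{G})$ need not be invariant under $\Delta$ and so carries no quantum group structure to which your argument applies. The standard remedy (Maes--Van Daele) is different: one produces, for each state $\omega$, a state invariant under $\omega$, shows via the key lemma that finitely many states admit a common invariant state, and then uses weak$^*$-compactness to obtain a state invariant under all states, which forces $(\iota\otimes h)\Delta(a)=h(a)1$ since states separate points. For the paper's setting (finite quantum groups) this is harmless, but as written your proof covers at most the separable case.
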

Let $\mathbb{G}=(A,\Delta)$ be a compact quantum group and
consider an element $u\in A\otimes B(H)$ with $\dim H=n$. We identify
$A\otimes B(H)=\mathbb{M}_{n}(A)$, and write $u=[u_{ij}]_{i,j=1}^{n}$. Here,
$u$ is called an \emph{$n$-dimensional representation }of $\mathbb{G}$
if for all $j,k=1,...,n$, we have 
\begin{equation}
\Delta(u_{jk})=\sum_{p=1}^{n}u_{jp}\otimes u_{pk}.\label{eq:comultiplication}
\end{equation}
A representation $u$ is said to be\emph{ non-degenerate} if $u$
is invertible, \emph{unitary} if $u$ is unitary, and \emph{irreducible}
if the only matrices $T\in\mathbb{M}_{n}(\mathbb{C})$ with $Tu=uT$
are multiples of the identity matrix. Two representations $u,v\in\mathbb{M}_{n}(A)$
are called \emph{equivalent} if there exists an invertible matrix
$T\in\mathbb{M}_{n}(\mathbb{C})$ such that $Tu=vT$. Denote by $\mathrm{Irr}(\mathbb{G})$
the set of unitary equivalence classes of irreducible unitary representations
of $\mathbb{G}$. For each $\alpha\in\mathrm{Irr}(\mathbb{G})$, let
$u^{\alpha}\in C(\mathbb{G})\otimes B(H_{\alpha})$ be a representative
of the class $\alpha$ where $H_{\alpha}$ is the finite dimensional
Hilbert space on which $u^{\alpha}$ acts. 

With the notation above, the $*$-subalgebra $\mathcal{A}$ spanned by $\{u_{ij}^{\alpha}:u^{\alpha}=[u_{ij}^{\alpha}]_{i,j=1}^{n_{\alpha}},\alpha\in\mathrm{Irr}(\mathbb{G})\}$,
usually called the algebra of the polynomials on $\mathbb{G}$, is
dense in $C(\mathbb{G})$ , and the Haar state $h$ is faithful on
this dense algebra. In the sequel we denote $\mathcal{A}=\mathrm{Pol}(\mathbb{G})$.
Consider the GNS representation $(\pi_{h},H_{h})$ of $C(\mathbb{G})$,
then $\mathrm{Pol}(\mathbb{G})$ can be viewed as a subalgebra of
$B(H_{h})$. Define $C_{r}(\mathbb{G})$ (resp., $L_{\infty}(\mathbb{G})$)
to be the C{*}-algebra (resp., the von Neumann algebra) generated
by $\mathrm{Pol}(\mathbb{G})$ in $B(H_{h})$. Then $h$ extends to
a normal faithful state on $L_{\infty}(\mathbb{G})$.

It is known that there exists a linear antihomomorphism $S$ on $\mathrm{Pol}(\mathbb{G})$ such that \begin{equation}
S(S(a)^*)^*=a,\quad a\in \mathrm{Pol}(\mathbb{G}),\label{antipode}
\end{equation}
determined by 
$$S(u_{ij}^{\alpha})=(u_{ji}^\alpha)^*,\quad u^\alpha=[u_{ij}^{\alpha}]_{i,j=1}^{n_{\alpha}},\ \alpha\in\mathrm{Irr}(\mathbb{G}).$$
$S$ is called the \emph{antipode} of $\mathbb{G}$. For $a,b\in \mathrm{Pol}(\mathbb{G})$, we have
\begin{align}\label{antipode property}
S((\iota\otimes h)(\Delta(b)(1\otimes a)))&=(\iota\otimes h)((1\otimes b)\Delta(a)),\\
S((h \otimes\iota)((b\otimes 1)\Delta(a)))&=(h \otimes\iota)(\Delta(b)(a\otimes 1)).\nonumber
\end{align}

We will  use the \emph{Sweedler notation} for the comultiplication of an element $ a\in A $, i.e.
omit the summation and the index in the formula $ \Delta(a)=\sum_i a_{(1),i} \otimes a_{(2),i}  $ and write
simply $ \Delta(a)=\sum a_{(1)} \otimes a_{(2)}  $.

The Peter-Weyl theory for compact groups can be extended to the quantum
case. In particular, it is known that for each $\alpha\in\irr$ there
exists a positive invertible operator $Q_{\alpha}\in B(H_{\alpha})$
such that $\mathrm{Tr}(Q_{\alpha})=\mathrm{Tr}(Q_{\alpha}^{-1})\coloneqq d_{\alpha}$
and 
\begin{equation}
h(u_{ij}^{\alpha}(u_{lm}^{\beta})^{*})=\delta_{\alpha\beta}\delta_{il}\dfrac{(Q_{\alpha})_{mj}}{d_{\alpha}},\quad h((u_{ij}^{\alpha})^{*}u_{lm}^{\beta})=\delta_{\alpha\beta}\delta_{jm}\dfrac{(Q_{\alpha}^{-1})_{li}}{d_{\alpha}}\label{eq:haar state def}
\end{equation}
where $\beta\in\irr$, $1\leq i,j\leq\dim H_{\alpha}$, $1\leq l,m\leq\dim H_{\beta}$. 

The dual quantum group $\hat{\qg}$ of $\qg$ is defined via its ``algebra
of functions'' 
\[
\ell_{\infty}(\hat{\mathbb{G}})=\oplus_{\alpha\in\irr}B(H_{\alpha})
\]
where $\oplus_{\alpha}B(H_{\alpha})$ refers to the direct sum of
$B(H_{\alpha})$, i.e. the bounded families $(x_{\alpha})_{\alpha}$
with each $x_{\alpha}$ in $B(H_{\alpha})$. We will not completely
recall the quantum group structure on $\hat{\mathbb{G}}$ as we do
not need it in the following. We only remark that the (left) Haar
weight $\hat{h}$ on $\hat{\qg}$ can be explicitly given by (see
e.g. \cite[Section 5]{vandaele1996discrete}) 
\[
\hat{h}:\ell_{\infty}(\hat{\qg})\ni x\mapsto\sum_{\a\in\irr}d_{\a}\tr(Q_{\alpha}p_{\a}x),
\]
where $p_{\a}$ is the projection onto $H_{\a}$ and $\mathrm{Tr}$
denotes the usual trace on $B(H_{\alpha})$ for each $\alpha$. 

Our main result will only concentrate on the case where $\mathbb{G}$
is \emph{of Kac type}, that is, its Haar state is tracial.
\begin{prop}[{\cite[Theorem 1.5]{woronowicz1998note}}]
\label{prop:kac}Let $\mathbb{G}$ be a compact quantum group. The
Haar state $h$ on $C(\mathbb{G})$ is tracial if and only if $Q_{\alpha}=\mathrm{Id}_{H_{\alpha}}$
for all $\alpha\in\irr$ in the formula \eqref{eq:haar state def}
and if and only if the antipode $S$ satisfies $S^{2}(x)=x$ for all
$x\in \mathrm{Pol}(\mathbb{G})$. In particular, if the above conditions are satisfied and $h$ is faithful on $C(\mathbb{G})$, then $S$ extends to a $*$-antihomomorphism on $C(\mathbb{G})$ which is positive and bounded of norm one according to \eqref{antipode}.
\end{prop}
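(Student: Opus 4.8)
The plan is to prove the two biconditionals separately, isolating the one genuinely analytic ingredient. Abbreviate the three conditions as (A) $h$ is tracial, (B) $Q_\alpha=\mathrm{Id}_{H_\alpha}$ for all $\alpha\in\irr$, and (C) $S^2=\mathrm{id}$ on $\pol$. I would obtain (A)$\Leftrightarrow$(B) by a bare-hands computation with the orthogonality relations \eqref{eq:haar state def}, then (B)$\Leftrightarrow$(C) through the description of $S^2$ as conjugation by $Q_\alpha$, and finally deduce the ``in particular'' clause from \eqref{antipode}.

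For (A)$\Leftrightarrow$(B) I would first reduce traciality to the identity $h(x^*x)=h(xx^*)$ for all $x$ (standard polarization for states), so that it suffices to test on $x=\sum_{\alpha,i,j}c_{ij}^\alpha u_{ij}^\alpha\in\pol$. Expanding and applying the two halves of \eqref{eq:haar state def} gives
\[
h(x^*x)=\sum_\alpha\frac1{d_\alpha}\sum_{i,j,l,m}\overline{c_{ij}^\alpha}\,c_{lm}^\alpha\,\delta_{jm}(Q_\alpha^{-1})_{li},\qquad h(xx^*)=\sum_\alpha\frac1{d_\alpha}\sum_{i,j,l,m}c_{ij}^\alpha\,\overline{c_{lm}^\alpha}\,\delta_{il}(Q_\alpha)_{mj}.
\]
When $Q_\alpha=\mathrm{Id}$ both collapse to $\sum_\alpha d_\alpha^{-1}\sum_{i,j}|c_{ij}^\alpha|^2$, proving (B)$\Rightarrow$(A); the trace property then passes to $C(\mathbb{G})$ by density. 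Conversely, applying (A) to the single product $u_{ij}^\alpha(u_{lm}^\alpha)^*$ and equating the two evaluations furnished by \eqref{eq:haar state def} yields $\delta_{il}(Q_\alpha)_{mj}=\delta_{mj}(Q_\alpha^{-1})_{il}$ for all indices. Taking $i=l$ forces $Q_\alpha$ to be a positive scalar multiple $c\,\mathrm{Id}$ of the identity with moreover $c=(Q_\alpha^{-1})_{ii}=c^{-1}$, whence $c=1$ and $Q_\alpha=\mathrm{Id}$.

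The heart of the matter is (B)$\Leftrightarrow$(C), which rests on the identity $S^2(u_{ij}^\alpha)=\sum_{p,q}(Q_\alpha^{-1})_{ip}\,u_{pq}^\alpha\,(Q_\alpha)_{qj}$, expressing $S^2$ as conjugation by $Q_\alpha$ on each isotypic block. Granting it, $S^2=\mathrm{id}$ amounts to $(1\otimes Q_\alpha)$ commuting with $u^\alpha$ for every $\alpha$, i.e. $Q_\alpha$ intertwining $u^\alpha$ with itself; irreducibility and Schur's lemma then force $Q_\alpha$ to be a positive scalar, which $\tr Q_\alpha=\tr Q_\alpha^{-1}$ makes equal to $1$, and the converse is immediate. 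I expect this identity to be the main obstacle, since it is exactly the explicit form of the square of the antipode --- equivalently the content of Woronowicz's scaling group, $S^2=\tau_{-i}$ with $\tau_t$ acting by $Q_\alpha^{it}$-conjugation. I would establish it either from the polar decomposition $S=R\circ\tau_{-i/2}$ of the antipode, or directly by tracking how $S$ transforms the coefficients of the conjugate representation $[(u_{ij}^\alpha)^*]$, whose failure to be unitary is measured precisely by $Q_\alpha$.

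For the ``in particular'' clause, assume (A)--(C) and that $h$ is faithful on $C(\mathbb{G})$. Taking adjoints in \eqref{antipode} gives $S(S(a)^*)=a^*$, and applying $S$ together with $S^2=\mathrm{id}$ yields $S(a^*)=S(a)^*$; hence $S$ is a unital, $*$-preserving antihomomorphism of $\pol$, and in particular $S(a^*a)=S(a)S(a)^*\ge0$, so $S$ is positive. To produce the continuous extension I would use that $h\circ S=h$ (checked on matrix coefficients) together with traciality to get $\|S(a)\|_2^2=h(S(aa^*))=h(aa^*)=\|a\|_2^2$; thus $S$ is $L_2$-isometric and extends to a normal $*$-antiautomorphism of $L_\infty(\mathbb{G})$, which is automatically isometric for the operator norm. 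Since $h$ is faithful, $C(\mathbb{G})=C_r(\mathbb{G})$ sits inside $L_\infty(\mathbb{G})$ and is preserved by this map, so $S$ extends to a positive, norm-one $*$-antihomomorphism of $C(\mathbb{G})$, as claimed.
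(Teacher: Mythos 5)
The paper offers no proof of this proposition to compare against: it is imported verbatim from \cite[Theorem 1.5]{woronowicz1998note}. So your proposal has to stand on its own, and judged that way it splits into a part that is genuinely complete and a part with a real gap.

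The equivalence (A)$\Leftrightarrow$(B) is correct and complete: polarization does reduce traciality of a state to $h(x^*x)=h(xx^*)$, your two expansions via \eqref{eq:haar state def} are right, and testing on $u_{ij}^\alpha(u_{lm}^\alpha)^*$ forces $Q_\alpha$ to be a scalar equal to $1$. The ``in particular'' clause is also essentially right ($S(a^*)=S(a)^*$ from \eqref{antipode} and $S^2=\mathrm{id}$, positivity from antimultiplicativity, $h\circ S=h$, $L_2$-isometry), but the step ``$L_2$-isometric, hence extends to a normal $*$-antiautomorphism of $L_\infty(\qg)$'' is precisely where traciality must be invoked and should not be waved through: writing $U$ for the $L_2$-unitary implementing $S$, the identity $S(a)S(y)=S(ya)$ gives $\pi_h(S(a))=U\rho(a)U^*$, where $\rho(a)$ is right multiplication by $a$, and $\nm{\rho(a)}\leq\nm{a}$ holds on $L_2(\qg)$ only because $h$ is a trace ($h(a^*y^*ya)=h(y^*yaa^*)\le\nm{a}^2\nm{y}_2^2$); combined with faithfulness of $h$ (so that $\pi_h$ is isometric on $C(\qg)$) this yields $\nm{S(a)}\le\nm{a}$ on $\pol$ and hence the desired extension. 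That argument is routine, but it is the actual content of the clause.

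The genuine gap is in (B)$\Leftrightarrow$(C). Everything there hangs on the identity $S^2(u_{ij}^\alpha)=\sum_{p,q}(Q_\alpha^{-1})_{ip}\,u_{pq}^\alpha\,(Q_\alpha)_{qj}$, which you assert but do not prove, and neither of your proposed routes settles it at the level of this paper's definitions. Here $Q_\alpha$ is introduced solely through the orthogonality relations \eqref{eq:haar state def}, while $S$ is introduced through $S(u_{ij}^\alpha)=(u_{ji}^\alpha)^*$; note that \eqref{antipode} alone yields only tautologies when applied to $S^2(u_{ij}^\alpha)=S((u_{ji}^\alpha)^*)$, so identifying the matrix governing the orthogonality relations with the matrix implementing $S^2$ by conjugation requires new input --- the unitarizability of the contragredient representation $[(u_{ij}^\alpha)^*]$, the identification of its canonical positive intertwiner with $Q_\alpha$ (using that \eqref{eq:haar state def} plus the normalization $\tr Q_\alpha=\tr Q_\alpha^{-1}$ determines $Q_\alpha$ uniquely), and the resulting Woronowicz character formula $S^2(a)=f_{-1}\star a\star f_1$ with $\hat f_1(\alpha)=Q_\alpha$. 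This is the substance of Woronowicz's Theorem 1.4/1.5, and the polar decomposition $S=R\circ\tau_{-i/2}$ you suggest invoking is itself established as part of that same theory, so appealing to it here is close to circular. Granting the identity, your finish is fine: $S^2=\mathrm{id}$ forces $Q_\alpha$ to commute with $u^\alpha$, the paper's definition of irreducibility makes $Q_\alpha$ a positive scalar, and $\tr Q_\alpha=\tr Q_\alpha^{-1}$ makes that scalar $1$.
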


\begin{prop}[\cite{vandaele1997haarfinite}]
If $\mathbb{G}$ is a finite quantum group, then the Haar state is
tracial on $C(\mathbb{G})$.
\end{prop}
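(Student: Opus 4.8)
The plan is to reduce the statement to the Woronowicz criterion recorded in Proposition \ref{prop:kac}: it suffices to prove that $Q_{\alpha}=\mathrm{Id}_{H_{\alpha}}$ for every $\alpha\in\irr$ (equivalently, $S^{2}=\mathrm{id}$). The only hypothesis at our disposal is finiteness, so I would first extract its structural consequences. Since $\pol=C(\qg)$ is finite dimensional and the matrix coefficients $\{u_{ij}^{\alpha}\}$ are linearly independent by the orthogonality relations \eqref{eq:haar state def}, the set $\irr$ is finite and each $H_{\alpha}$ is finite dimensional; consequently $\ell_{\infty}(\hat{\qg})=\bigoplus_{\alpha}B(H_{\alpha})$ is finite dimensional, so the dual $\hat{\qg}$ is itself a finite, hence \emph{compact}, quantum group.

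The heart of the argument is then a self-duality observation. As a compact quantum group, $\hat{\qg}$ carries a unique Haar state $\omega$, which is simultaneously the unique left-invariant and the unique right-invariant state. On the other hand the left Haar weight of $\hat{\qg}$ is the one displayed in the text, $\hat{h}(x)=\sum_{\alpha}d_{\alpha}\tr(Q_{\alpha}p_{\alpha}x)$, while the right Haar weight is given by the same formula with $Q_{\alpha}$ replaced by $Q_{\alpha}^{-1}$, say $\hat{h}'(x)=\sum_{\alpha}d_{\alpha}\tr(Q_{\alpha}^{-1}p_{\alpha}x)$. Both are faithful positive functionals, and since $\tr(Q_{\alpha})=\tr(Q_{\alpha}^{-1})=d_{\alpha}$ they have the same total mass $\hat{h}(1)=\hat{h}'(1)=\sum_{\alpha}d_{\alpha}^{2}$. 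Normalizing, $\hat{h}/\sum_{\alpha}d_{\alpha}^{2}$ is a left-invariant state and $\hat{h}'/\sum_{\alpha}d_{\alpha}^{2}$ a right-invariant one; by uniqueness both equal $\omega$, whence $\hat{h}=\hat{h}'$. Comparing the two formulas block by block forces $\tr(Q_{\alpha}\,\cdot\,)=\tr(Q_{\alpha}^{-1}\,\cdot\,)$ on $B(H_{\alpha})$, i.e. $Q_{\alpha}=Q_{\alpha}^{-1}$; since $Q_{\alpha}$ is positive and invertible this gives $Q_{\alpha}=\mathrm{Id}_{H_{\alpha}}$, and Proposition \ref{prop:kac} yields that $h$ is tracial.

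The delicate point, and the place where finiteness is truly used, is the claim that $\hat{\qg}$ is unimodular, i.e. that the normalizations of $\hat{h}$ and $\hat{h}'$ coincide with one and the same two-sided-invariant Haar state. This rests on $\hat{\qg}$ being compact, so that a genuine invariant \emph{state}, not merely a weight, exists and is unique on each side; this is precisely the consequence of $\dim C(\qg)<\infty$ established above. For an infinite compact quantum group the dual is only discrete, and its left and right Haar weights may genuinely differ, consistent with the existence of non-Kac examples. I would therefore spend the bulk of the write-up pinning down the right Haar weight formula for $\hat{\qg}$ and the left/right invariance of $\hat{h}$, $\hat{h}'$ from the comultiplication of $\hat{\qg}$; once these standard facts about discrete/compact duality are in place, the computation above closes at once.
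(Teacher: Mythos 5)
The paper itself contains no proof of this proposition: it is imported verbatim from Van Daele \cite{vandaele1997haarfinite}, where the argument is purely Hopf-algebraic (existence, uniqueness and positivity of the integral on a finite-dimensional Hopf $*$-algebra, leading to $S^{2}=\iota$). Your argument is a genuinely different, analytic route, and it is correct in substance: finiteness of $C(\qg)$ gives $|\irr|<\infty$, so $\ell_{\infty}(\hat{\qg})=\oplus_{\alpha}B(H_{\alpha})$ is finite dimensional and, with the dual comultiplication $\hat{\Delta}$, is again a finite, hence compact, quantum group; its left and right Haar weights are the $Q_{\alpha}$- and $Q_{\alpha}^{-1}$-twisted traces, finite and of equal total mass $\sum_{\alpha}d_{\alpha}^{2}$, so they normalize to one-sided invariant states, which must both coincide with the unique Haar state of $\hat{\qg}$; this forces $Q_{\alpha}=Q_{\alpha}^{-1}$, hence $Q_{\alpha}=\mathrm{Id}_{H_{\alpha}}$ by positivity, and Proposition \ref{prop:kac} concludes. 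Two points should be made explicit in a complete write-up. First, the uniqueness statement recorded in the paper concerns two-sided invariant states, so you need the standard one-line strengthening that a merely left- (or right-) invariant state $\rho$ already equals the Haar state $\omega$: computing $(\rho\otimes\omega)\hat{\Delta}(x)$ in two ways gives $\rho(x)=\omega(x)$. Second, the two facts you import --- that $(\ell_{\infty}(\hat{\qg}),\hat{\Delta})$ satisfies Woronowicz's axioms, and that the right Haar weight is $x\mapsto\sum_{\alpha}d_{\alpha}\tr(Q_{\alpha}^{-1}p_{\alpha}x)$ --- must be cited from the duality theory of discrete quantum groups \cite{vandaele1996discrete}, the same source the paper quotes for the left Haar weight; their derivations there rest only on the orthogonality relations \eqref{eq:haar state def}, which hold for arbitrary (non-Kac) compact quantum groups, so no circularity arises. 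Your approach buys a short conceptual proof at the price of invoking this duality machinery, whereas Van Daele's cited proof is longer but self-contained at the Hopf-$*$-algebra level.
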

For a compact quantum group $\mathbb{G}$, we write $L_{2}(\mathbb{G})$
to be the Hilbert space associated to the GNS-construction with respect
to the Haar state $h$. Similarly, denote by $\ell_{2}(\hat{\mathbb{G}})$
the Hilbert space given by the GNS-construction for $\ell_{\infty}(\hat{\mathbb{G}})$
with respect to the Haar weight $\hat{h}$. If $\mathbb{G}$ is of
Kac type, for $1\leq p\leq\infty$, we denote additionally by $L_{p}(\mathbb{G})$
the $L_{p}$-space associated to the pair $(L_{\infty}(\mathbb{G}),h)$,
as defined in the previous subsection.

Finally we turn to the dual free product of compact quantum groups. The
following construction is given by \cite{wang1995freeprod}.
\begin{prop}
Let $\mathbb{G}_{1}=(A,\Delta_{A})$ and $\mathbb{G}_{2}=(B,\Delta_{B})$
be two compact quantum groups with Haar states $h_{A},h_{B}$ respectively.
There exists a unique comultiplication $\Delta$ on $A*^{c_{0}}B$
such that the pair $(A*^{c_{0}}B,\Delta)$ forms a compact quantum
group, denoted by $\mathbb{G}=\mathbb{G}_{1}\hat{\ast}\mathbb{G}_{2}$
and we have
\[
\Delta|_{A}=(i_{A}\otimes i_{A})\circ\Delta_{A},\quad\Delta|_{B}=(i_{B}\otimes i_{B})\circ\Delta_{B},
\]
where $i_{A}$ and $i_{B}$ are the natural embedding of $A$ and
$B$ into $A*^{c_{0}}B$ respectively. Moreover the Haar state on
$\mathbb{G}$ is the free product state $h_{A}*h_{B}$.
\end{prop}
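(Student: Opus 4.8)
The plan is to build $\Delta$ first on the dense polynomial level, to identify the free product state $h_A\ast h_B$ as the Haar state by verifying its invariance, and then to pass to the reduced C*-completion using the standard machinery of compact quantum groups. I would begin with the Hopf $*$-algebras $\mathrm{Pol}(\mathbb{G}_1)\subset A$ and $\mathrm{Pol}(\mathbb{G}_2)\subset B$ and form their algebraic free product $\mathcal{A}=\mathrm{Pol}(\mathbb{G}_1)\ast\mathrm{Pol}(\mathbb{G}_2)$ as in \eqref{algebraic free product}. The two unital $*$-homomorphisms $(i_A\otimes i_A)\circ\Delta_A$ and $(i_B\otimes i_B)\circ\Delta_B$ take values in the algebraic tensor product $\mathcal{A}\odot\mathcal{A}\subset (A\ast^{c_0}B)\otimes(A\ast^{c_0}B)$, and by the universal property of the algebraic free product they combine into a unique unital $*$-homomorphism $\Delta\colon\mathcal{A}\to\mathcal{A}\odot\mathcal{A}$. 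Coassociativity, together with the existence of a counit $\varepsilon=\varepsilon_1\ast\varepsilon_2$ and of an antipode $S$ (an anti-homomorphism produced from $S_1,S_2$ by applying the same universal property to $\mathcal{A}^{\mathrm{op}}$), holds on the generators coming from each factor and then propagates to all of $\mathcal{A}$ since both sides of each identity are (anti)homomorphisms. Hence $(\mathcal{A},\Delta)$ is a Hopf $*$-algebra, and the bijectivity of its Galois maps yields the algebraic cancellation identities $\mathcal{A}\odot\mathcal{A}=\mathrm{span}\{\Delta(a)(b\otimes1)\}=\mathrm{span}\{\Delta(a)(1\otimes b)\}$. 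Uniqueness of $\Delta$ is then immediate, since any comultiplication with the prescribed restrictions to $A$ and $B$ is a bounded $*$-homomorphism determined on the generating set $A\cup B$.

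The heart of the matter is to show that $h:=h_A\ast h_B$ is left-invariant on $\mathcal{A}$, i.e. $(h\otimes\iota)\Delta(x)=h(x)1$ (right-invariance being symmetric). By linearity it suffices to treat $x=1$ and reduced words $w=a_1\cdots a_n$ with $a_k\in\mathring{A}_{j_k}$ and $j_1\neq\cdots\neq j_n$, for which $h(w)=0$, so the target is $(h\otimes\iota)\Delta(w)=0$. Writing $\Delta(w)=\prod_k\Delta(a_k)$ with each $\Delta(a_k)$ having both legs in $A_{j_k}$, one is led to the expression $\sum h\big((a_1)_{(1)}\cdots(a_n)_{(1)}\big)\,(a_1)_{(2)}\cdots(a_n)_{(2)}$, whose first legs form a word of alternating indices. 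I would peel off the first letter and recenter $(a_1)_{(1)}$ about its mean under $h_{j_1}$: the scalar part factors out the quantity $\sum_{(a_1)}h_{j_1}((a_1)_{(1)})(a_1)_{(2)}=(h_{j_1}\otimes\iota)\Delta_{j_1}(a_1)=h_{j_1}(a_1)1=0$, which annihilates the leading term, while the centered remainder is treated by the freeness of $h$ and the moment structure of the free product state, reducing to strictly simpler configurations that vanish by induction. The subtlety to respect is that $h$ of an alternating word does \emph{not} factor as $\prod_k h_{j_k}((a_k)_{(1)})$: already for a word of index pattern $1,2,1$ one gets instead $h_{j_2}((a_2)_{(1)})\,h_{j_1}((a_1)_{(1)}(a_3)_{(1)})$, because the two outer same-algebra legs recombine after the middle one passes through its scalar part. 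It is nonetheless the factor-wise invariance on each $\mathbb{G}_{j_k}$ that forces the cancellation, and this bookkeeping is the main obstacle of the proof.

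Once invariance is in hand, $(\mathcal{A},\Delta)$ is a Hopf $*$-algebra equipped with the positive faithful invariant state $h$ (faithfulness being that of the free product state recalled above), hence a compact quantum group algebra. I would then invoke the standard completion theory: the GNS construction for $(\mathcal{A},h)$ produces a multiplicative unitary through which $\Delta$ extends to a unital $*$-homomorphism into the \emph{minimal} tensor product, and the algebraic cancellation identities pass to their norm closures, giving exactly the density conditions of the definition of a compact quantum group. Finally, because the GNS representation of the free product state is by construction the free product of the GNS representations of $h_A$ and $h_B$, this reduced completion is precisely $A\ast^{c_0}B$ carrying the state $h_A\ast h_B$; the prescribed restrictions $\Delta|_A=(i_A\otimes i_A)\circ\Delta_A$ and $\Delta|_B=(i_B\otimes i_B)\circ\Delta_B$ hold by construction, and the identification of $h_A\ast h_B$ as the Haar state follows from its invariance together with the uniqueness of the Haar state.
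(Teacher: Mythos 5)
First, a point of comparison: the paper does not actually prove this proposition --- it is quoted from Wang's paper \cite{wang1995freeprod} --- so your proposal has to be judged on its own merits. Your overall architecture (construct $\Delta$ on $\mathrm{Pol}(\mathbb{G}_1)\ast\mathrm{Pol}(\mathbb{G}_2)$ by the universal property, verify the Hopf $*$-algebra axioms on generators, prove invariance of $h_A\ast h_B$, then pass to the reduced completion and get the density conditions from the Hopf-algebraic cancellation identities) is the standard and correct route, and the outer steps are fine, modulo the standing assumption, implicit in the paper's free-product conventions, that $h_A$ and $h_B$ are faithful on $A$ and $B$.

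The gap sits exactly at the step you call the heart of the matter. Your peel-and-recurse scheme for proving $(h\otimes\iota)\Delta(a_1\cdots a_n)=0$ does not close as described: after centering the first letter, you propose to handle what remains ``by the freeness of $h$ and the moment structure of the free product state'', but the recombined expressions this produces (your $1,2,1$ example) are sums of products of factor-state values against second-leg products; they are \emph{not} of the form $(h\otimes\iota)\Delta(w')$ for a shorter reduced word $w'$, so they are not instances of any inductive hypothesis you have stated, and you never say what quantity the induction actually runs on. The repair is to notice that the mechanism you correctly used on the first letter applies to \emph{every} letter, in any position, simultaneously: expand each first leg as $(a_k)_{(1)} = h_{j_k}((a_k)_{(1)})1 + \bigl[(a_k)_{(1)}-h_{j_k}((a_k)_{(1)})1\bigr]$ and expand multilinearly into $2^n$ terms. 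In any term where some letter $k$ contributes its scalar part, the Sweedler summation indices of distinct letters are independent, so summing over $(a_k)$ alone inserts the factor $\sum_{(a_k)} h_{j_k}((a_k)_{(1)})(a_k)_{(2)} = (h_{j_k}\otimes\iota)\Delta_{j_k}(a_k)=h_{j_k}(a_k)1=0$ into the second-leg product, and the term vanishes --- no recombination or free-moment computation is ever needed. The unique surviving term is the one whose first-leg word is alternating with all letters centered, and it vanishes by the very definition of the free product state. Even more economically: by linearity you may take each $a_k=u^{\alpha_k}_{p_kq_k}$ with $\alpha_k$ a nontrivial irreducible of $\mathbb{G}_{j_k}$; then $\Delta(a_k)=\sum_r u^{\alpha_k}_{p_kr}\otimes u^{\alpha_k}_{rq_k}$ has all first legs centered automatically by the orthogonality relations \eqref{eq:haar state def}, so the first-leg word is already reduced and $h$ annihilates it outright, with no expansion and no induction at all. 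With either repair, the rest of your argument goes through.
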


\subsection{Fourier analysis}

The Fourier transform for locally compact quantum groups has been defined in \cite{kahng2010fourier}, \cite{cooney2010hy} and  \cite{caspers2013fourier}.
In the setting of compact quantum groups, we may give a more explicit
description below. Let a compact quantum group $\mathbb{G}$ be fixed.
For a linear functional $\f$ on $\mathrm{Pol}(\mathbb{G})$, we define
the \emph{Fourier transform} $\hat{\f}=(\hat{\f}(\a))_{\a\in\irr}\in\oplus_{\a}B(H_{\a})$
by 
\begin{equation}
\hat{\f}(\a)=(\f\otimes\iota)((u^{\a})^{*})\in B(H_{\a}),\quad\a\in\irr.
\label{eq:fourier def}
\end{equation}
In particular, any $x\in L_{\infty}(\qg)$ (or $L_{2}(\mathbb{G})$)
induces a continuous functional on $L_{\infty}(\qg)$ defined by $y\mapsto h(yx)$,
and the Fourier transform $\hat{x}=(\hat{x}(\a))_{\a\in\irr}$ of
$x$ is given by 
\[
\hat{x}(\a)=(h(\cdot x)\otimes\iota)((u^{\a})^{*})\in B(H_{\a}),\quad\a\in\irr.
\]
The above definition is slightly different from that of \cite{caspers2013fourier}
or \cite{kahng2010fourier}. Indeed, we replace the unitary $u^{\alpha}$
by $(u^{\a})^{*}$ in the above formulas. This is just to be compatible
with standard definitions in classical analysis on compact groups
such as in \cite[Section 5.3]{folland1995harmonic}, which will not
cause essential difference. On the other hand,
the notation $\hat{\varphi}$ has some slight conflict with the dual
Haar weight $\hat{h}$ on $\hat{\mathbb{G}}$ whereas one can
distinguish them by the elements on which it acts, so we hope that
this will not cause ambiguity for readers. 

Denote by $\mathcal{F}:x\mapsto\hat{x}$ the Fourier transform established
above. It is easy to establish the Fourier inversion formula and the Plancherel theorem
for $L_2(\mathbb{G})$. As we did not find them explicitly for compact quantum groups in the literature, we include the detailed calculation of the Fourier series in the following proposition.
\begin{prop}
\emph{\label{prop:plancherel}(a) }For all $x\in L_{2}(\mathbb{G})$,
we have 
\begin{equation}
x=\sum_{\alpha\in\irr}d_{\alpha}(\iota\otimes\mathrm{Tr})[(1\otimes\hat{x}(\alpha)Q_{\alpha})u^{\alpha}],\label{fourier series}
\end{equation}
where the convergence of the series is in the $L_{2}$-sense. For any $\alpha\in\irr$, if we denote by $\mathcal E _\alpha $ the orthogonal projection of $L_2(\mathbb G )$ onto the subspace spanned by by the matrix coefficients $(u_{ij}^{\alpha})_{i,j=1}^{n_{\alpha}}$, and write $\mathcal{E}_{\alpha}x=\sum_{i,j}x_{ij}^{\alpha}u_{ij}^{\alpha}$ with $x_{ij}^{\alpha}\in\mathbb C$, $X_{\alpha}=[x_{ji}^{\alpha}]_{i,j}$, then
$$\hat x (\alpha)=d_\alpha^{-1}X_\alpha Q_\alpha^{-1}.$$

\emph{(b) }$\mathcal{F}$ is a unitary from $L_{2}(\mathbb{G})$ onto
$\ell_{2}(\hat{\mathbb{G}})$.

\end{prop}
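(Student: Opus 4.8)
The plan is to reduce everything to the orthogonality relations \eqref{eq:haar state def} together with the Peter--Weyl decomposition. Since $\mathrm{Pol}(\mathbb{G})$ is dense in $L_{2}(\mathbb{G})$ and decomposes as the orthogonal direct sum of the finite-dimensional blocks $\spa\{u_{ij}^{\alpha}\}_{i,j}$, with $\mathcal{E}_{\alpha}$ the associated projections satisfying $\sum_{\alpha}\mathcal{E}_{\alpha}=\mathrm{Id}$, it suffices to work block by block and then pass to the limit. First I would unwind the definition \eqref{eq:fourier def}: writing $u^{\alpha}=\sum_{pq}u_{pq}^{\alpha}\otimes e_{pq}$ one gets $\hat{x}(\alpha)_{kl}=h((u_{lk}^{\alpha})^{*}x)$. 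Applying this to $\mathcal{E}_{\alpha}x=\sum_{ij}x_{ij}^{\alpha}u_{ij}^{\alpha}$ (only the $\alpha$-block survives, by orthogonality of distinct blocks) and inserting the second relation in \eqref{eq:haar state def} gives, after a short index manipulation, the identity $\hat{x}(\alpha)=d_{\alpha}^{-1}X_{\alpha}Q_{\alpha}^{-1}$ with $X_{\alpha}=[x_{ji}^{\alpha}]_{i,j}$. This is the technical heart of (a).

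With this identity the inversion formula is immediate. Expanding the $\alpha$-term of the right-hand side of \eqref{fourier series} via $(\iota\otimes\tr)[(1\otimes\hat{x}(\alpha)Q_{\alpha})u^{\alpha}]=\sum_{pq}u_{pq}^{\alpha}(\hat{x}(\alpha)Q_{\alpha})_{qp}$ and substituting $\hat{x}(\alpha)Q_{\alpha}=d_{\alpha}^{-1}X_{\alpha}$ collapses the $\alpha$-term to $d_{\alpha}^{-1}\mathcal{E}_{\alpha}x$; the prefactor $d_{\alpha}$ then produces exactly $\mathcal{E}_{\alpha}x$, and summing over $\alpha\in\irr$ recovers $x=\sum_{\alpha}\mathcal{E}_{\alpha}x$, with $L_{2}$-convergence guaranteed by Peter--Weyl.

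For part (b) I would compute both sides of the Plancherel identity directly. Using block-orthogonality, $\|x\|_{2}^{2}=\sum_{\alpha}\|\mathcal{E}_{\alpha}x\|_{2}^{2}$, and each block norm is evaluated from \eqref{eq:haar state def}. On the dual side, the explicit Haar weight gives $\|\hat{x}\|_{\ell_{2}(\hat{\mathbb{G}})}^{2}=\sum_{\alpha}d_{\alpha}\tr(Q_{\alpha}\hat{x}(\alpha)^{*}\hat{x}(\alpha))$; substituting $\hat{x}(\alpha)=d_{\alpha}^{-1}X_{\alpha}Q_{\alpha}^{-1}$ and using that $Q_{\alpha}$ is positive, the factors $Q_{\alpha}$ and $Q_{\alpha}^{-1}$ partially cancel under the trace, leaving a quantity that matches $\|\mathcal{E}_{\alpha}x\|_{2}^{2}$ term by term. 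This shows $\mathcal{F}$ is isometric on $\mathrm{Pol}(\mathbb{G})$, hence extends to an isometry of $L_{2}(\mathbb{G})$ into $\ell_{2}(\hat{\mathbb{G}})$. Surjectivity follows because on each block the map $X_{\alpha}\mapsto d_{\alpha}^{-1}X_{\alpha}Q_{\alpha}^{-1}$ is a linear bijection onto $B(H_{\alpha})$ (as $Q_{\alpha}$ is invertible), so the range of $\mathcal{F}$ contains all finitely supported families, which are dense in $\ell_{2}(\hat{\mathbb{G}})$; being an isometry its range is closed, so $\mathcal{F}$ is onto.

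I expect the main obstacle to be purely bookkeeping: the two relations in \eqref{eq:haar state def} differ in which index is paired and in where $Q_{\alpha}$ versus $Q_{\alpha}^{-1}$ appears, and the definition \eqref{eq:fourier def} involves $(u^{\alpha})^{*}$, so one must track the transpose hidden in $X_{\alpha}=[x_{ji}^{\alpha}]$ and the adjoint in $\hat{x}(\alpha)$ carefully to avoid swapping a $Q_{\alpha}$ for a $Q_{\alpha}^{-1}$ or a row index for a column index. Once the block identity $\hat{x}(\alpha)=d_{\alpha}^{-1}X_{\alpha}Q_{\alpha}^{-1}$ is pinned down with the correct conventions, both (a) and (b) are short formal computations with no analytic subtlety beyond the standard density of $\mathrm{Pol}(\mathbb{G})$.
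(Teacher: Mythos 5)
Your proposal is correct and follows essentially the same route as the paper's proof: both decompose $L_{2}(\mathbb{G})$ into the orthogonal Peter--Weyl blocks $E_{\alpha}$, derive the block identity $\hat{x}(\alpha)=d_{\alpha}^{-1}X_{\alpha}Q_{\alpha}^{-1}$ from the orthogonality relations \eqref{eq:haar state def}, obtain the inversion formula \eqref{fourier series} term by term, and prove the Plancherel identity and surjectivity via the density of finitely supported families in $\ell_{2}(\hat{\mathbb{G}})$. The only cosmetic difference is that you make surjectivity explicit through the block bijection $X_{\alpha}\mapsto d_{\alpha}^{-1}X_{\alpha}Q_{\alpha}^{-1}$, a point the paper treats implicitly.
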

\begin{proof}
(a) Denote by $E_{\alpha}$ the subspace spanned by the matrix units $(u_{ij}^{\alpha})_{i,j=1}^{n_{\alpha}}$
for $\alpha\in\irr$. Then $\mathrm{Pol}(\mathbb{G})$ is spanned
by all the $E_{\alpha}$, $\alpha\in\irr$. It is easy to see from
Hölder's inequality that $\mathrm{Pol}(\mathbb{G})$ is $\|\cdot\|_{2}$-dense
in $L_{\infty}(\mathbb{G})$, and also recall that $L_{2}(\mathbb{G})$
is the $\|\cdot\|_{2}$-completion of $L_{\infty}(\mathbb{G})$, so
$\mathrm{Pol}(\mathbb{G})$ is $\|\cdot\|_{2}$-dense in $L_{2}(\mathbb{G})$
and $L_{2}(\mathbb{G})$ is a Hilbert direct sum of the orthogonal
subspaces $(E_{\alpha})_{\alpha\in\irr}$. So each $x\in L_{2}(\mathbb{G})$
can be written as 
\begin{equation}
x=\sum_{\alpha\in\irr}\mathcal{E}_{\alpha}x=\sum_{\alpha\in\irr}\sum_{i,j}x_{ij}^{\alpha}u_{ij}^{\alpha},\quad(x_{ij}^{\alpha}\in\mathbb{C})\label{eq:in pf decomposition proj}
\end{equation}
where $\mathcal{E_{\alpha}}x\coloneqq\sum_{i,j}x_{ij}^{\alpha}u_{ij}^{\alpha}$
is the orthogonal projection of $x$ onto $E_{\alpha}$.

Now for $\alpha\in\irr$, write $u^{\alpha}=\sum_{l,m}u_{lm}^{\alpha}\otimes e_{lm}^{\alpha}$
and $X_{\alpha}=[x_{ji}^{\alpha}]_{i,j}$ where $e_{lm}^\alpha$ denote the canonical matrix units of $B(H_\alpha)$. Then 
\begin{align}
\hat{x}(\alpha) & =(h(\cdot x)\otimes\iota)((u^{\alpha})^{*})=(h(\cdot\mathcal{E}_{\alpha}x)\otimes\iota)((u^{\alpha})^{*})+(h(\cdot\mathcal{E}_{\alpha}^{\bot}x)\otimes\iota)((u^{\alpha})^{*})\label{eq:fourier coeffecients}\\
 & =\sum_{i,j,l,m}x_{ij}^{\alpha}\left(h(\cdot u_{ij}^{\alpha})\otimes\iota\right)\left((u_{lm}^{\alpha})^{*}\otimes e_{ml}^{\alpha}\right)+0\nonumber \\
 & =\sum_{i,j,l,m}x_{ij}^{\alpha}h\left((u_{lm}^{\alpha})^{*}u_{ij}^{\alpha}\right)e_{ml}^{\alpha}\nonumber \\
 & =d_{\alpha}^{-1}\sum_{i,j,l}x_{ij}^{\alpha}(Q_{\alpha}^{-1})_{il}e_{jl}^{\alpha}=d_{\alpha}^{-1}X_{\alpha}Q_{\alpha}^{-1}.\nonumber 
\end{align}
Hence 
\begin{align*}
  d_{\alpha}(\iota\otimes\mathrm{Tr})[(1\otimes\hat{x}(\alpha)Q_{\alpha})u^{\alpha}] &= \sum_{i,j,l,m}(\iota\otimes\mathrm{Tr})[(1\otimes x_{ji}^{\alpha}e_{ij}^{\alpha})(u_{lm}^{\alpha}\otimes e_{lm}^{\alpha})]\\
& =  \sum_{i,j,m}x_{ji}^{\alpha}(\iota\otimes\mathrm{Tr})(u_{jm}^{\alpha}\otimes e_{im}^{\alpha})
=  \sum_{i,j}x_{ji}^{\alpha}u_{ji}^{\alpha}=\mathcal{E_{\alpha}}x.
\end{align*}
Combining the last equality with \eqref{eq:in pf decomposition proj}
proves the desired (\ref{fourier series}).

(b) Let $$x=\sum_{\alpha\in\irr}\mathcal{E}_{\alpha}x=\sum_{\alpha\in\irr}\sum_{i,j}x_{ij}^{\alpha}u_{ij}^{\alpha}\in L_{2}(\mathbb{G}).$$
For each $\alpha\in\irr$, 
\begin{align*}
\|\mathcal{E_{\alpha}}x\|_{2}^{2} & =h((\mathcal{E_{\alpha}}x)^{*}(\mathcal{E_{\alpha}}x))=\sum_{i,j,l,m}\overline{x_{ij}^{\alpha}}x_{lm}^{\alpha}h\left((u_{ij}^{\alpha})^{*}u_{lm}^{\alpha}\right)\\
 & =d_{\alpha}^{-1}\sum_{i,j,l}\overline{x_{ij}^{\alpha}}x_{lj}^{\alpha}(Q_{\alpha}^{-1})_{li}
\end{align*}
and also by (\ref{eq:fourier coeffecients}) 
\begin{align*}
d_{\alpha}^{2}\mathrm{Tr}(Q_{\alpha}\hat{x}(\alpha)^{*}\hat{x}(\alpha)) & =\mathrm{Tr}(X_{\alpha}^{*}X_{\alpha}Q_{\alpha}^{-1})=\sum_{i,j,l}\overline{x_{ij}^{\alpha}}x_{lj}^{\alpha}(Q_{\alpha}^{-1})_{li}.
\end{align*}
Hence by Parseval's identity, 
\begin{align*}
\|x\|_{2}^{2} & =\sum_{\alpha}\|\mathcal{E_{\alpha}}x\|_{2}^{2}=\sum_{\alpha}d_{\alpha}^{-1}\sum_{i,j,l}\overline{x_{ij}^{\alpha}}x_{lj}^{\alpha}(Q_{\alpha}^{-1})_{li}\\
 & =\sum_{\alpha}d_{\alpha}^{-1}d_{\alpha}^{2}\mathrm{Tr}(Q_{\alpha}\hat{x}(\alpha)^{*}\hat{x}(\alpha))=\|\hat{x}\|_{2}^{2}.
\end{align*}
Thus $\mathcal{F}$ maps isometrically $L_{2}(\mathbb{G})$ into $\ell_{2}(\hat{\mathbb{G}})$.
From (\ref{fourier series}) and the isometric relation we see that
the range of $\mathcal{F}$ contains the subset of all finitely supported
families $(a_{\alpha})\in\oplus_{\alpha}B(H_{\alpha})$, which is
dense in $\ell_{2}(\hat{\mathbb{G}})$. Therefore $\mathcal{F}$ is
surjective and hence unitary. 
\end{proof}

\begin{example}
(1) Let $G$ be a compact group and define
$$\Delta(f)(s,t)=f(st),\quad f\in C(G),\ s,t\in G.$$ 
Then $\mathbb{G}=(C(G),\Delta)$ is a compact quantum group. The elements in $ \mathrm{Irr}(\mathbb{G})\coloneqq \mathrm{Irr}(G) $ coincide with the usual strongly continuous irreducible unitary representations of $G$. Any continuous functional $\varphi$ on $C(G)$ corresponds to a complex Radon measure $\mu$ on $G$ by the Riesz representation theorem. By definition \eqref{eq:fourier def}, the Fourier series of $\mu$ is given by 
$$\hat{\mu}(\pi)=(\varphi\otimes\iota)(\pi(\cdot)^*)=\int_G \pi(g)^*\,d \mu(g),\quad \pi\in \mathrm{Irr}(G).$$
In particular for $f\in L_2(G)$, we have 
$$\hat{f}(\pi)=\int_G\pi(g)^*f(g)dg,\quad \pi\in \mathrm{Irr}(G)$$
and we have the Fourier expansion and the Plancherel formula
$$f=\sum_{\pi\in \mathrm{Irr}(G)}d_\pi\mathrm{Tr}(\hat{f}(\pi)\pi),\quad \|f\|_2^2=\sum_{\pi\in \mathrm{Irr}(G)}d_\pi \|\hat{f}(\pi)\|_{\mathrm{HS}}^2$$
where $d_\pi$ is the dimension of the Hilbert space on which the representation $\pi$ acts and $\|\|_{\mathrm{HS}}$ denotes the usual Hilbert-Schmidt norm. We refer to \cite[Section 5.3]{folland1995harmonic} and  \cite[pp.77-87]{hewittross1970abstract} for more information.

(2) Let $\Gamma$ be a discrete group with its neutral element $e$ and $C^{*}_r(\Gamma)$ be the associated reduced group
C{*}-algebra generated by $\lambda(\Gamma)\subset B(\ell_{2}(\Gamma))$,
where $\lambda$ denotes the left regular representation. The \textquotedblleft dual" $\mathbb{G}=\hat{\Gamma}$ of $\Gamma$ is a compact quantum group such that $C(\mathbb{G})$ is the group C{*}-algebra $C^{*}_r(\Gamma)$ equipped
with the comultiplication $\Delta:C^{*}_r(\Gamma)\to C^{*}_r(\Gamma)\otimes C^{*}_r(\Gamma)$
defined by
\[
\Delta(\lambda(\gamma))=\lambda(\gamma)\otimes\lambda(\gamma),\quad\gamma\in\Gamma.
\]
The Haar state of $\mathbb{G}$ is the unique trace $\tau$ on $C^{*}_r(\Gamma)$ such that $\tau(1)=1$ and $\tau(\lambda(\gamma))=0$ for $\gamma\in \Gamma\setminus\{e\}$. The elements of $\lambda(\Gamma)$ give all irreducible unitary representations of $\mathbb{G}$, which are all of dimension $1$. It is easy to check from definition that for any $f\in C^*_r(\Gamma)$,
\[ \hat f (\gamma)= \tau(f\lambda(\gamma)^*),\quad \gamma\in \Gamma.\]
And any $f\in L_2(\mathbb{G})$ has an expansion such that 
$$f=\sum_{\gamma\in \Gamma}\hat{f}(\gamma)\lambda(\gamma),\quad \|f\|_2^2=\sum_{\gamma\in \Gamma}|\hat{f}(\gamma)|^2.$$
\end{example}

Let us end the section with a brief description of multipliers and convolutions in terms of Fourier series. Return back to a general compact quantum group $\mathbb{G}$. For $a=(a_{\a})_{\a}\in\oplus_{\a}B(H_{\a})$, we define the left
\emph{multiplier} $m_{a}:\mathrm{Pol}(\qg)\to\mathrm{Pol}(\qg)$ associated
to $a$ by 
\begin{equation}
m_{a}x=\sum_{\a\in\irr}d_{\a}(\iota\otimes\tr)[(1\otimes\hat{x}(\a)a_{\a}Q_{\alpha})u^{\a}],\quad x\in\mathrm{Pol}(\qg),\label{left multiplier formula}
\end{equation}
which means that 
\begin{equation}
(m_{a}x)\,\hat{}\,(\alpha)=\hat{x}(\alpha)a_{\alpha},\quad\alpha\in\irr.\label{eq:multiplier, Fourier coefficient}
\end{equation}
In the same way we may define the right multiplier $m_{a}':\mathrm{Pol}(\qg)\to\mathrm{Pol}(\qg)$
such that 
\[
m_{a}'x=\sum_{\a\in\irr}d_{\a}(\iota\otimes\tr)[(1\otimes a_{\a}\hat{x}(\a)Q_{\alpha})u^{\a}],\quad x\in\mathrm{Pol}(\qg).
\]
Observe that the multiplier $m_a$ (or $m_a'$ resp.) is unital, i.e. $m_a(1)=1$ ($m_a'(1)=1$ resp.) if and only if $a_1=1$.
\begin{rem}
In case $\mathbb{G}$ is of Kac type, that is, $Q_{\alpha}=\mathrm{Id}_{H_{\alpha}}$
for all $\alpha\in\irr$ by Proposition \ref{prop:kac}, the multipliers
$m_{a}$ and $m_{a}'$ can be equivalently defined by 
\begin{equation}
(m_{a}\otimes\iota)(u^{\a})=(1\otimes a_{\a})(u^{\a}),\quad(m_{a}'\otimes\iota)(u^{\a})=(u^{\a})(1\otimes a_{\a}),\quad\a\in\irr,
\label{multiplier def by junge et al}
\end{equation}
which corresponds to the standard definition of left and right multipliers
on locally compact quantum groups in \cite{jungeruan2009repquantumgroup}
and \cite{daws2012cpmultiplier}. If $\mathbb{G}$ is not of Kac type,
the above formula \eqref{multiplier def by junge et al} gives a similar equality corresponding to \eqref{eq:multiplier, Fourier coefficient},
that is, $(m_{a}x)\,\hat{}\,(\alpha)=\hat{x}(\alpha)Q_{\alpha}a_{\alpha}Q_{\alpha}^{-1}$,
$\alpha\in\irr$.
\end{rem}

We will use the standard definition of convolution products given by Woronowicz. Let $x\in C(\mathbb{G})$ and $\f,\f'$ be linear functionals on $C(\mathbb{G})$. We define  
\begin{align*}
\f\star\f'&=(\f\otimes\f')\circ\Delta,\\
x\star \f &=(\varphi\otimes\iota)\Delta(x),\\
\f\star x
&=(\iota\otimes\varphi)\Delta(x).
\end{align*}
Observing the embedding $x\mapsto h(\cdot\, x)$ from $C(\mathbb{G})$ into $C(\mathbb{G})^{*}$, the convolution products defined above are related as follows according to \eqref{antipode property} (see also \cite[Proposition 2.2]{vandaele2007fourier}): on $\mathrm{Pol}(\mathbb{G})$ we have
\begin{equation}\label{two convolutions}
h(\cdot\, x)\star \f=h(\cdot\, [(\f\circ S)\star x]),\quad \f\star h(\cdot\, x)=h(\cdot\, [x\star (\f\circ S^{-1})]).
\end{equation}
We note that for $\a\in\irr$ and $u^{\a}=[u_{ij}^{\a}]_{1\leq i,j\leq n_{\a}}$,
\[
\left[\f((u_{ji}^{\a})^{*})\right]_{i,j}=(\f\otimes\iota)((u^{\a})^{*})=\hat{\f}(\a).
\]
Then by \eqref{eq:comultiplication}, a straightforward calculation
shows that \begin{equation}\label{fourier of conv of functional}
(\f\star\f')\,\hat{}\,(\a)=\hat{\f'}(\a)\hat{\f}(\a).
\end{equation}
Hence together with \eqref{two convolutions},
\begin{equation}\label{eq:fourier series for convolution}
(x\star\f)\,\hat{}\,(\a)=\hat{x}(\a)(\f\circ S)\,\hat{}\,(\a),\quad
(\f\star x)\,\hat{}\,(\a)=(\f\circ S^{-1})\,\hat{}\,(\a)\hat{x}(\a).
\end{equation}

\section{\label{sec:convolution and hopf image}Non-degenerate states and applications to Hopf images}

In this short section we give the key lemma on non-degenerate states, which will be of use for our main results. We need
the following observation adapted from \cite[Lemma 2.1]{woronowicz1998note}. The result is mentioned in \cite{soltan2005bohr}.
\begin{lem}
Let $\mathbb{G}$ be a compact quantum group and $A=C(\mathbb{G})$.
Suppose that $(\rho_{i})_{i\in I}$ is a family of states on $A$
separating the points of $A_{+}$, i.e., $\forall x\in A_{+}\backslash\{0\},$
$\exists i\in I$, $\rho_{i}(x)>0$. If $\rho$ is a state on $A$
such that 
\[
\forall i\in I,\quad\rho\star\rho_{i}=\rho_{i}\star\rho=\rho,
\]
then $\rho$ is the Haar state $h$ of $\mathbb{G}$.\end{lem}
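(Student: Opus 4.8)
The plan is to pass to the Fourier side and show that $\hat\rho$ agrees with the Fourier transform of the Haar state. By the Peter--Weyl theory one has $\hat h(\alpha)=0$ for every nontrivial $\alpha\in\irr$ and $\hat h(1)=1$; since a bounded functional on $C(\qg)$ is determined by its restriction to the dense subalgebra $\pol$, hence by its Fourier transform, it suffices to prove that $\hat\rho(\alpha)=0$ for every nontrivial $\alpha$ (the value at the trivial representation being automatically $\rho(1)=1$). Using the convolution formula $(\f\star\f')\,\hat{}\,(\alpha)=\hat{\f'}(\alpha)\hat{\f}(\alpha)$, the hypotheses $\rho\star\rho_i=\rho_i\star\rho=\rho$ become
\[
\hat{\rho_i}(\alpha)\,\hat\rho(\alpha)=\hat\rho(\alpha)=\hat\rho(\alpha)\,\hat{\rho_i}(\alpha),\qquad i\in I,\ \alpha\in\irr .
\]
Fixing a nontrivial $\alpha$ and writing $N=\hat\rho(\alpha)$ and $M_i=\hat{\rho_i}(\alpha)\in B(H_\alpha)$, the goal becomes $N=0$.

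Suppose $N\neq0$ and pick a nonzero vector $\zeta=(\zeta_q)_q$ in the range of $N$. The relation $M_iN=N$ gives $M_i\zeta=\zeta$ for all $i$. Since $u^\alpha$ is unitary and each $\rho_i$ is a state, the slice $M_i=(\rho_i\otimes\iota)((u^\alpha)^*)$ is a contraction of $H_\alpha$, so a fixed vector of $M_i$ is also a fixed vector of $M_i^*$; writing this out,
\[
\sum_q\rho_i(u^\alpha_{pq})\,\zeta_q=\zeta_p,\qquad 1\le p\le n_\alpha,\ i\in I .
\]

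The key step is to turn these linear identities into a positivity statement that the separating family can detect. I would introduce the elements $y_p=\sum_q\zeta_q u^\alpha_{pq}-\zeta_p 1\in\pol$ and the positive element $w=\sum_p y_p^*y_p\ge 0$. Expanding $w$, using the unitarity relation $\sum_p(u^\alpha_{pq})^*u^\alpha_{pq'}=\delta_{qq'}1$ for the quadratic term and the fixed-vector identities above for the two cross terms, a direct computation gives $\rho_i(w)=0$ for every $i\in I$. Because $(\rho_i)_{i\in I}$ separates the points of $C(\qg)_+$, this forces $w=0$, hence $y_p=0$ for all $p$, that is $\sum_q u^\alpha_{pq}\zeta_q=\zeta_p 1$; equivalently $\zeta$ is a nonzero invariant vector of the irreducible representation $u^\alpha$. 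This is impossible unless $u^\alpha$ is the trivial representation, contradicting the nontriviality of $\alpha$. Therefore $N=\hat\rho(\alpha)=0$ for all nontrivial $\alpha$, and $\rho=h$.

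The main obstacle, and the real content of the argument, is the construction of the positive element $w$ together with the computation $\rho_i(w)=0$: this is the device that converts the ``soft'' fixed-point relations $M_i\zeta=\zeta$, which by themselves are insensitive to positivity, into a genuine positive element killed by the whole family, at which point the separation hypothesis applies. Everything else is routine; note in passing that only the relation $\rho\star\rho_i=\rho$ is actually needed.
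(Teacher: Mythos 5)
Your proof is correct, but it takes a genuinely different route from the paper's. The paper adapts Woronowicz's argument: it introduces the closed left ideal $\mathcal{I}=\{q\in A\otimes A:(\rho_{i}\otimes\rho)(q^{*}q)=0\ \forall i\in I\}$, shows via coassociativity and the invariance hypotheses that $(\Psi_{L}\otimes\iota)\Delta(A)\subset\mathcal{I}$ for $\Psi_{L}(x)=(\rho\otimes\iota)\Delta(x)-\rho(x)1$, then uses the cancellation axiom of a compact quantum group (density of $(1\otimes A)\Delta(A)$ in $A\otimes A$) together with the separation hypothesis to force $\Psi_{L}\equiv0$ (and symmetrically $\Psi_{R}\equiv0$), whence $\rho=h$ by the defining property of the Haar state. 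You instead work on the Fourier side: the relation $M_{i}N=N$, the contraction trick $M_{i}\zeta=\zeta\Rightarrow M_{i}^{*}\zeta=\zeta$, and the positive element $w=\sum_{p}y_{p}^{*}y_{p}$ with $\rho_{i}(w)=0$ convert the hypothesis into a nonzero invariant vector for a nontrivial irreducible representation, which Schur's lemma (or, even more directly, applying $h$ to the identity $\sum_{q}u_{pq}^{\alpha}\zeta_{q}=\zeta_{p}1$ and using $h(u_{pq}^{\alpha})=0$) forbids. Both proofs share the same underlying mechanism --- turning invariance into a positive element annihilated by the whole separating family, at which point separation bites --- but yours leans on the Peter--Weyl and Fourier machinery (existence of $h$, orthogonality relations, Schur's lemma, the convolution formula, all developed in Section \ref{sec:Fourier-analysis} of the paper), whereas the paper's proof works directly from the C*-algebraic axioms and needs none of the representation theory. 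In exchange, your argument is more concrete and yields a small strengthening that the paper's computation does not: you only use the one-sided relation $\rho\star\rho_{i}=\rho$, while the paper's evaluation of the terms $q_{1}$ and $q_{3}$ requires both $\rho_{i}\star\rho=\rho$ and $\rho\star\rho_{i}=\rho$.
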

\begin{proof}
Set 
\[
\mathcal{I}=\{q\in A\otimes A:\forall i\in I,\,(\rho_{i}\otimes\rho)(q^{*}q)=0\}.
\]
Then $\mathcal{I}$ is a closed left ideal of $A\otimes A$. Define
\[
\Psi_{L}(x)=(\rho\otimes\iota)\Delta(x)-\rho(x)1,\quad x\in A.
\]
Since $\Psi_{L}$ is a difference of two unital completely positive
maps, we see that $\Psi_{L}$ is a completely bounded map with norm
at most $2$. We will prove that 
\[
(\Psi_{L}\otimes\iota)\Delta(A)\subset\mathcal{I}.
\]
In fact, given $x\in A$, by the coassociativity of $\Delta$ we have
\begin{align*}
q & \coloneqq(\Psi_{L}\otimes\iota)\Delta(x)=(\rho\otimes\iota\otimes\iota)(\iota\otimes\Delta)\Delta(x)-1\otimes[(\rho\otimes\iota)\Delta(x)]\\
 & =\Delta((\rho\otimes\iota)\Delta(x))-1\otimes[(\rho\otimes\iota)\Delta(x)].
\end{align*}
Thus
\begin{align*}
q^{*}q & =\Delta\left([(\rho\otimes\iota)\Delta(x)]{}^{*}(\rho\otimes\iota)\Delta(x)\right)-\Delta((\rho\otimes\iota)\Delta(x))^{*}[1\otimes(\rho\otimes\iota)\Delta(x)]\\
 & \qquad-[1\otimes\left((\rho\otimes\iota)\Delta(x)\right)^{*}]\Delta((\rho\otimes\iota)\Delta(x))+1\otimes\left([(\rho\otimes\iota)\Delta(x)]^{*}[(\rho\otimes\iota)\Delta(x)]\right)
\end{align*}
and hence for any $i\in I$ we may write
\[
(\rho_{i}\otimes\rho)(q^{*}q)=q_{1}-q_{2}-q_{3}+q_{4}
\]
where by the convolution invariance assumption and the coassociativity
of $\Delta$ we have
\begin{align*}
q_{1} & =(\rho_{i}\otimes\rho)\Delta\left([(\rho\otimes\iota)\Delta(x)]{}^{*}(\rho\otimes\iota)\Delta(x)\right)=\rho\left([(\rho\otimes\iota)\Delta(x)]{}^{*}(\rho\otimes\iota)\Delta(x)\right),\\
q_{2} & =q_{3}^{*},\\
q_{3} & =(\rho_{i}\otimes\rho)\left([1\otimes\left((\rho\otimes\iota)\Delta(x)\right)^{*}]\Delta((\rho\otimes\iota)\Delta(x))\right)\\
 & =\rho\left([\left((\rho\otimes\iota)\Delta(x)\right)^{*}](\rho_{i}\otimes\iota)((\rho\otimes\iota\otimes\iota)(\iota\otimes\Delta)\Delta(x))\right)\\
 & =\rho\left([\left((\rho\otimes\iota)\Delta(x)\right)^{*}](\rho\otimes\rho_{i}\otimes\iota)((\Delta\otimes\iota)\Delta(x))\right)=\rho\left([(\rho\otimes\iota)\Delta(x)]{}^{*}(\rho\otimes\iota)\Delta(x)\right),\\
q_{4} & =(\rho_{i}\otimes\rho)\left(1\otimes[\left((\rho\otimes\iota)\Delta(x)\right)^{*}(\rho\otimes\iota)\Delta(x)]\right)=\rho\left([(\rho\otimes\iota)\Delta(x)]{}^{*}(\rho\otimes\iota)\Delta(x)\right).
\end{align*}
Note that $q_{1}=q_{2}=q_{3}=q_{4}$. So $(\rho_{i}\otimes\rho)(q^{*}q)=0$
and $(\Psi_{L}\otimes\iota)\Delta(A)\subset\mathcal{I}$ is proved.

Now by the density of $(1\otimes A)\Delta(A)$ in $A\otimes A$ and
the complete boundedness of $\Psi_{L}$, it follows that $\Psi_{L}(A)\otimes1\subset\overline{(1\otimes A)(\Psi_{L}\otimes\iota)\Delta(A)}$
is also contained in the closed left ideal $\mathcal{I}$, which means
that for any $i\in I$ and $x\in A$,
\[
\rho_{i}(\Psi_{L}(x)^{*}\Psi_{L}(x))=\rho_{i}\otimes\rho(\Psi_{L}(x)^{*}\Psi_{L}(x)\otimes1)=0.
\]
Recall that $(\rho_{i})_{i\in I}$ separates the points of $A_{+}$,
so we have $\Psi_{L}(x)=0$ and $(\rho\otimes\iota)\Delta(x)=\rho(x)1$
for all $x\in A$.

A similar argument applies as well to the map $\Psi_{R}(x)=(\iota\otimes\rho)\Delta(x)-\rho(x)1$,
$x\in A$. So $\rho=h$ is the Haar state.\end{proof}
\begin{rem}
We remark that in case $\mathbb{G}$ is a finite quantum group,
we can provide a simpler proof of the above lemma. Indeed, since $C(\mathbb{G})$
is finite-dimensional, its dual space $C(\mathbb{G})^{*}$ is also
finite-dimensional, and we can take a maximal linear independent family
$\{\rho_{i_{1}},\ldots,\rho_{i_{s}}\}\subset(\rho_{i})_{i\in I}$
which form a basis of the subspace spanned by $(\rho_{i})_{i\in I}$
in $C(\mathbb{G})^{*}$. Given a nonzero $x\in A_{+}$, there is an
$i\in I$ such that $\rho_{i}(x)>0$. Write $\rho_{i}=\sum_{k=1}^{s}a_{k}\rho_{i_{k}}\ (a_{k}\in\mathbb{C})$,
then we see clearly that there exists at least one $k\in\{1,\ldots,s\}$
such that $\rho_{i_{k}}(x)\neq0$ in order that $\rho_{i}(x)>0$.
Then $\rho'=\frac{1}{s}\sum_{k=1}^{s}\rho_{i_{k}}$ is faithful on
$C(\qg)$ and $\rho\star\rho'=\rho'\star\rho=\rho$, thus $\rho$
is the Haar state by \cite[Lemma 2.1]{woronowicz1998note}.
\end{rem}
We immediately obtain the following fact.
\begin{lem}
\label{lem:non degenerate}Let $\mathbb{G}$ be a compact quantum
group and $\varphi$ be a state on $C(\mathbb{G})$. If $\varphi$ is \emph{non-degenerate} on $C(\mathbb{G})$ in
the sense that for all nonzero $x\in C(\mathbb{G})_{+}$ there exists
$k\geq0$ such that $\varphi^{\star k}(x)>0$, then 
\[
w^{*}\text{-}\lim_{n\to\infty}\frac{1}{n}\sum_{k=1}^{n}\varphi^{\star k}=h.
\]
If additionally $h$ is faithful on $C(\mathbb{G})$, then the converse also holds.\end{lem}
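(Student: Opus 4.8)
The plan is to identify the weak-$*$ Ces\`aro limit as the unique bi-invariant state furnished by the preceding lemma. Write $A=C(\mathbb G)$ and $\sigma_n=\frac1n\sum_{k=1}^n\varphi^{\star k}$. Each $\varphi^{\star k}$ ($k\ge1$) is a state, being the composition of the state $\varphi^{\otimes k}$ with the unital $*$-homomorphism coming from iterating $\Delta$, so each $\sigma_n$ is a state as a convex combination of states. Since the state space of $A$ is weak-$*$ compact, the net $(\sigma_n)$ has cluster points, and the heart of the forward implication is to show that \emph{every} cluster point equals $h$; once this is done, uniqueness of the cluster point in a compact Hausdorff space yields weak-$*$ convergence $\sigma_n\to h$, which is the assertion.

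To prove that a cluster point $\rho$ equals $h$, I would first establish that $\rho$ is convolution bi-invariant under $\varphi$. The telescoping identities $\sigma_n\star\varphi-\sigma_n=\frac1n(\varphi^{\star(n+1)}-\varphi)$ and $\varphi\star\sigma_n-\sigma_n=\frac1n(\varphi^{\star(n+1)}-\varphi)$ have norm at most $2/n\to0$, while for fixed $\varphi$ the one-sided convolutions $\psi\mapsto\psi\star\varphi$ and $\psi\mapsto\varphi\star\psi$ are weak-$*$ continuous, since $(\psi\star\varphi)(x)=\psi(\varphi\star x)$ and $(\varphi\star\psi)(x)=\psi(x\star\varphi)$ with $\varphi\star x,\,x\star\varphi\in A$. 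Passing to the limit along the subnet defining $\rho$ gives $\rho\star\varphi=\varphi\star\rho=\rho$, and coassociativity of $\Delta$ (i.e.\ associativity of $\star$) then upgrades this inductively to $\rho\star\varphi^{\star k}=\varphi^{\star k}\star\rho=\rho$ for all $k\ge1$; the case $k=0$ is the counit identity $\rho\star\epsilon=\epsilon\star\rho=\rho$. Thus $\rho$ is bi-invariant under the whole family $(\varphi^{\star k})_{k\ge0}$. The non-degeneracy hypothesis states exactly that this family of states separates the points of $A_+$, so the preceding lemma applies verbatim and forces $\rho=h$.

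For the converse, assuming $h$ faithful on $A$, I would argue by contraposition. If $\varphi$ were not non-degenerate there would exist a nonzero $x\in A_+$ with $\varphi^{\star k}(x)=0$ for every $k\ge0$; then $\sigma_n(x)=\frac1n\sum_{k=1}^n\varphi^{\star k}(x)=0$ for all $n$, and the assumed Ces\`aro limit gives $h(x)=\lim_n\sigma_n(x)=0$. Faithfulness of $h$ forces $x=0$, a contradiction, so the Ces\`aro condition implies non-degeneracy.

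I expect the main obstacle to lie in the role of the index $k=0$, i.e.\ the counit $\varphi^{\star0}=\epsilon$, in the separation step: for a general reduced $C(\mathbb G)$ the counit need not extend to a bounded state, so the cleaner route is to work with the genuine states $(\varphi^{\star k})_{k\ge1}$ and reduce the separation hypothesis to $k\ge1$. This reduction is available because, if no $k\ge1$ witnesses a given nonzero $x\in A_+$, then $x\star\varphi=0$; combined with the invariance identity $h(x\star\varphi)=(\varphi\otimes h)\Delta(x)=h(x)$, this forces $h(x)=0$, so on such $x$ one has $\rho(x)=\lim_n\sigma_n(x)=0=h(x)$ and the agreement $\rho=h$ is unaffected. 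Alternatively, in the setting of primary interest—finite quantum groups—$\epsilon$ is a genuine state and may simply be kept in the separating family. The remaining ingredients (weak-$*$ continuity of one-sided convolution and associativity of $\star$) are routine once the coassociativity bookkeeping is in place.
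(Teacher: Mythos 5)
Your main line of argument is the same as the paper's: the paper's (very terse) proof also takes a weak-$*$ accumulation point of the Ces\`aro means, observes that by non-degeneracy this family of states separates the points of $C(\mathbb{G})_+$, invokes the preceding separation lemma to conclude that every accumulation point equals $h$, and finishes by compactness of the state space; your converse is verbatim the paper's. Your telescoping and weak-$*$ continuity computations merely make explicit the bi-invariance $\rho\star\varphi^{\star k}=\varphi^{\star k}\star\rho=\rho$ that the paper leaves unstated.

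The one place where you go beyond the paper---the reduction of the index $k=0$ to $k\ge 1$---is where your write-up has a genuine flaw. First, the implication ``no $k\ge1$ witnesses $x$ $\Rightarrow$ $x\star\varphi=0$'' is not valid as stated: on $C(\mathbb{Z}/4\mathbb{Z})$, writing $g$ for a generator, take $\varphi=\mathrm{ev}_{g^2}$ and $x=1_{\{g\}}$; then $\varphi^{\star k}(x)=x(g^{2k})=0$ for every $k\ge1$, yet $x\star\varphi=(\varphi\otimes\iota)\Delta(x)=1_{\{g^3\}}\ne 0$. The claim can be repaired, but only by applying non-degeneracy to $x\star\varphi$ itself: $y=x\star\varphi\ge0$ satisfies $\varphi^{\star k}(y)=\varphi^{\star(k+1)}(x)=0$ for $k\ge1$ and $\epsilon(y)=\varphi(x)=0$, so $y$ admits no witness at all and non-degeneracy forces $y=0$. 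Second, and more importantly, even the repaired claim does not accomplish the reduction: it only shows that $h$ and $\rho$ vanish on such ``bad'' positive elements. The separation lemma is not a pointwise statement---its proof needs the separating hypothesis on all of $C(\mathbb{G})_+\setminus\{0\}$ in order to conclude $(\rho\otimes\iota)\Delta(x)=\rho(x)1$ for every $x$---so knowing $\rho(x)=h(x)=0$ on the bad elements does not allow you to run it, and ``the agreement $\rho=h$ is unaffected'' is not a conclusion you are entitled to. The workable resolutions are exactly the ones you mention in passing: when the counit is a bounded state (in particular for finite quantum groups, the only case the main theorem needs), keep $\epsilon=\varphi^{\star 0}$ in the separating family, since $\rho\star\epsilon=\epsilon\star\rho=\rho$ is automatic; in general, read non-degeneracy as a $k\ge1$ condition. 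Note that the latter is what the paper itself silently does: its proof simply asserts that the Ces\`aro family separates $C(\mathbb{G})_+$ (which is immediate only under the $k\ge1$ reading), and its applications---condition (5) of Theorem \ref{cor:convolution_quantul} and the Hopf-image theorem---verify witnesses with $k\ge1$.
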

\begin{proof}
If the above limit holds and $h$ is faithful on $C(\mathbb{G})$, then clearly $\varphi$ is non-degenerate
since if there existed a nonzero $x\geq0$ such that $\varphi^{\star n}(x)=0$
for all $n$, then we would have $\lim_{n}\frac{1}{n}\sum_{k=1}^{n}\varphi^{\star k}(x)=0$,
which contradicts the faithfulness of $h$. On the other hand, if
$\varphi$ is non-degenerate, the family of states $\{\frac{1}{n}\sum_{k=1}^{n}\varphi^{\star k}:n\geq1\}$
separates the points of $C(\mathbb{G})_{+}$, so any accumulation
point of this family becomes the unique Haar state by our previous
lemma.
\end{proof}

We would like to digress momentarily to see an application of above lemmas to some problems concerning Hopf images. Let $A$ be a unital C*-algebra and $\pi:C(\mathbb{G})\to A$ be a unital $*$-homomorphism. The \emph{Hopf image} of $\pi$, firstly introduced by Banica and Bichon in \cite{banicabichon2010hopfimage}, is the largest algebra $C(\mathbb{G}_\pi)$ for some compact quantum subgroup $\mathbb{G}_\pi\subset \mathbb{G}$ such that $\pi$ factorizes through $C(\mathbb{G}_\pi)$. In this paper we will use the following equivalent characterization recently given in \cite{skalskisoltan2014quantumfamily}: for each $k$ consider
$$
\pi_k =(\underbrace{\pi \otimes \cdots \otimes \pi}_k) \circ \Delta^{(k-1)}:C(\mathbb{G})\to A^{\otimes k}
$$
and let $\mathcal{I}=\cap_{k=1}^\infty \ker \pi_k$. Then, there is a compact quantum group $\mathbb{G}_\pi=(B,\Delta_\pi)$ with a $*$-homomorphism $\pi_q:B\to A$ such that 
\begin{equation}\label{eq: hopf image}
B=C(\mathbb{G})/\mathcal{I},\quad \Delta_\pi \circ q=(q\otimes q)\circ \Delta,\quad \pi=\pi_q \circ q
\end{equation}
where $q:C(\mathbb{G})\to C(\mathbb{G})/\mathcal{I}$ denotes the quotient map. The algebra $B=C(\mathbb{G}_\pi )$ is exactly the Hopf image of $\pi$. Now let $h_\mathbb{G}, h_{\mathbb{G}_\pi}$ be the Haar states on $\mathbb{G},\mathbb{G}_\pi$ respectively. A related question raised in \cite{banicafranzskalski2012inner} is the computation of the associated idempotent state $h_{\mathbb{G}_\pi}\circ q$ on $\mathbb{G}$. Simply based on Lemma \ref{lem:non degenerate}, the following property generalizes the main result of \cite{banicafranzskalski2012inner}.

\begin{thm}
Let $\mathbb{G}$ be a compact quantum group and $A$ be a unital C*-algebra with a unital $*$-homomorphism $\pi:C(\mathbb{G})\to A$. Let $\mathbb{G}_\pi$ be the compact quantum group constructed above. Then given any faithful state $\varphi$ on $A$, 
$$ h_{\mathbb{G}_\pi}\circ q= w^*\text{-}\lim_{n\to\infty}\frac{1}{n}\sum_{k=1}^{n}(\varphi\circ \pi)^{\star k}.$$
\end{thm}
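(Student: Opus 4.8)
The plan is to transport $\varphi$ to the Hopf image $\mathbb{G}_\pi$ and reduce the statement to an application of Lemma \ref{lem:non degenerate} on $\mathbb{G}_\pi$. First I would set $\tilde\varphi=\varphi\circ\pi_q$, a state on $B=C(\mathbb{G}_\pi)$; the factorization $\pi=\pi_q\circ q$ from \eqref{eq: hopf image} then gives $\varphi\circ\pi=\tilde\varphi\circ q$. Writing $\star_\pi$ for the convolution on $\mathbb{G}_\pi$, the morphism property $\Delta_\pi\circ q=(q\otimes q)\circ\Delta$ shows by an immediate induction that $q$ intertwines the two convolutions, that is $(\tilde\varphi^{\star_\pi k})\circ q=(\varphi\circ\pi)^{\star k}$ for all $k\geq1$. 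Since $\mu\mapsto\mu\circ q$ is the adjoint of a bounded map and hence weak*-continuous on $B^{*}$, it therefore suffices to prove $w^{*}\text{-}\lim_{n\to\infty}\frac1n\sum_{k=1}^{n}\tilde\varphi^{\star_\pi k}=h_{\mathbb{G}_\pi}$ and then precompose with $q$.

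By Lemma \ref{lem:non degenerate} the displayed limit holds as soon as $\tilde\varphi$ is non-degenerate on $C(\mathbb{G}_\pi)$, so this is the assertion I would establish. Introduce $(\pi_q)_k=\pi_q^{\otimes k}\circ\Delta_\pi^{(k-1)}\colon B\to A^{\otimes k}$; the same induction as above, now combined with $\pi_q\circ q=\pi$, yields $(\pi_q)_k\circ q=\pi_k$, and since $\tilde\varphi=\varphi\circ\pi_q$ one computes $\tilde\varphi^{\star_\pi k}=\varphi^{\otimes k}\circ(\pi_q)_k$. Now fix a nonzero $b\in C(\mathbb{G}_\pi)_+$ and lift it to some $x\in C(\mathbb{G})$ with $q(x)=b$. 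Because $\ker q=\mathcal I=\bigcap_k\ker\pi_k$ and $b\neq0$, there is a $k$ with $\pi_k(x)\neq0$, hence $(\pi_q)_k(b)=\pi_k(x)\neq0$. As $(\pi_q)_k$ is a $*$-homomorphism, $(\pi_q)_k(b)$ is a nonzero positive element of $A^{\otimes k}$, so the faithfulness of $\varphi^{\otimes k}$ will give $\tilde\varphi^{\star_\pi k}(b)=\varphi^{\otimes k}\big((\pi_q)_k(b)\big)>0$, which is exactly non-degeneracy of $\tilde\varphi$.

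The step requiring the most care, and the one I would single out as the main obstacle, is the faithfulness of the product state $\varphi^{\otimes k}$ on the minimal tensor product $A^{\otimes k}$, since this is what upgrades the algebraic nonvanishing $(\pi_q)_k(b)\neq0$ to the strict inequality $\tilde\varphi^{\star_\pi k}(b)>0$. I would obtain it by induction from the two-factor case: if $\varphi,\psi$ are faithful states and $c\geq0$ satisfies $(\varphi\otimes\psi)(c)=0$, then $\varphi\big((\iota\otimes\psi)(c)\big)=0$ together with faithfulness of $\varphi$ forces $(\iota\otimes\psi)(c)=0$; hence $(\rho\otimes\psi)(c)=0$ for every state $\rho$ on the first leg, and faithfulness of $\psi$ then gives $(\rho\otimes\iota)(c)=0$ for all such $\rho$, so that $(\rho\otimes\omega)(c)=0$ for all $\rho,\omega$ and therefore $c=0$, because product functionals separate the points of the minimal tensor product. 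With this fact in hand, the remaining ingredients, namely the intertwining identities for $q$ and the weak*-continuity of precomposition by $q$, are entirely routine.
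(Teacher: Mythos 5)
Your proposal is correct and follows essentially the same route as the paper: transport $\varphi$ to the state $\varphi\circ\pi_q$ on $C(\mathbb{G}_\pi)$, use the factorization \eqref{eq: hopf image} to identify $(\varphi\circ\pi_q)^{\star k}\circ q$ with $\varphi^{\otimes k}\circ\pi_k$, conclude non-degeneracy from $\ker q=\bigcap_k\ker\pi_k$, and finish with Lemma \ref{lem:non degenerate} followed by precomposition with $q$. The only difference is that you explicitly verify, via the slice-map induction, that $\varphi^{\otimes k}$ is faithful on the minimal tensor product $A^{\otimes k}$ --- a point the paper passes over with the words ``since $\varphi$ is faithful'' --- so your write-up is if anything slightly more complete than the original.
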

\begin{proof}
Let $\varphi$ be a faithful state on $A$ and let $\mathcal{I}=\cap_{k=1}^\infty \ker \pi_k$ with $\pi_k,k\geq 1$ constructed as above. Let us show that $\varphi\circ \pi_q$ is non-degenerate on $B=C(\mathbb{G}_\pi)$. Consider any $x=q(y)$ with $y\in C(\mathbb{G})_+$ satisfying
$$\forall k\geq 1,\quad (\varphi\circ \pi_q)^{\star k}(x)=0.$$ 
Since, by \eqref{eq: hopf image},
\begin{align*}
(\varphi\circ \pi_q)^{\star k}&=[(\varphi\otimes\cdots\otimes\varphi)\circ (\pi_q\otimes\circ\otimes \pi_q)]\Delta^{(k-1)}(x)\\
&=[(\varphi\otimes\cdots\otimes\varphi)\circ ((\pi_q\circ q)\otimes\cdots\otimes (\pi_q\circ q) )]\Delta^{(k-1)}(y)=(\varphi\otimes\cdots\otimes\varphi)(\pi_k (y))
\end{align*}
and since $\varphi$ is faithful, we get $y\in \cap_{k=1}^\infty \ker \pi_k$, which means that $x=q(y)=0$. As a result $\varphi\circ \pi_q$ is non-degenerate. Therefore we have $h_{\mathbb{G}_\pi}= w^*\text{-}\lim_{n\to\infty}\frac{1}{n}\sum_{k=1}^{n}(\varphi\circ \pi_q)^{\star k}$, and hence using \eqref{eq: hopf image} again,
$$h_{\mathbb{G}_\pi}\circ q= w^*\text{-}\lim_{n\to\infty}\frac{1}{n}\sum_{k=1}^{n}(\varphi\circ \pi_q)^{\star k}\circ q=w^*\text{-}\lim_{n\to\infty}\frac{1}{n}\sum_{k=1}^{n}(\varphi\circ \pi)^{\star k},$$
as desired.
\end{proof}

\section{\label{sec:Some--improving-convolutions}Main results}

In this section we aim to give several characterizations of
$L_{p}$-improving convolutions given by states on finite quantum groups,
and also give the constructions for the free product of finite quantum
groups. We will start with some discussions on multipliers on
compact quantum groups. 
In this section we keep the notation of multipliers $m_{a}$, $m_{a}'$
and convolutions $\varphi_{1}\star\varphi_{2}$ given in Section \ref{sec:Fourier-analysis}.
\begin{lem}
Let $\mathbb{G}$ be a compact quantum group of Kac type. Suppose $a\in\ell_{\infty}(\hat{\qg})$
such that $m_{a}$ (resp. $m_{a}'$) extends to a unital left (resp.,
right) multiplier on $L_2(\qg)$ and $b=aa^{*}$. Then $\lim_{n}\frac{1}{n}\sum_{k=1}^{n}m_{b}^{k}x=h(x)1$
for all $x\in L_2(\qg)$ if and only if $\lim_{n}\frac{1}{n}\sum_{k=1}^{n}m_{b}'^{k}x=h(x)1$
for all $x\in L_2(\qg)$ if and only if $\nm{a_{\a}}<1$ for all $\a\in\irr\setminus\set1$.\end{lem}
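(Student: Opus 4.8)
The plan is to transport everything to the Fourier side via the Plancherel unitary $\mathcal{F}$ of Proposition~\ref{prop:plancherel}, under which $m_b$ and $m_b'$ become explicit operations on the coefficients $\hat x(\alpha)$. Writing $b_\alpha=a_\alpha a_\alpha^*\ge 0$ and iterating \eqref{eq:multiplier, Fourier coefficient}, I get $(m_b^k x)\,\hat{}\,(\alpha)=\hat x(\alpha)\,b_\alpha^k$ and, symmetrically, $((m_b')^{k}x)\,\hat{}\,(\alpha)=b_\alpha^k\,\hat x(\alpha)$ for all $k\ge 1$. Unitality of $m_a$ means $a_1=1$, hence $b_1=1$; and since $\hat x(1)=h(x)$ while $(h(x)1)\,\hat{}\,(\alpha)=h(x)\delta_{\alpha,1}$, the trivial representation always produces the correct limit. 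Thus the whole statement reduces to understanding the positive Cesàro means $c_n(\alpha):=\frac1n\sum_{k=1}^n b_\alpha^k$ as $n\to\infty$, uniformly in $\alpha\in\irr\setminus\{1\}$.

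I would first isolate the elementary spectral fact that, for a positive operator $b$ on a finite-dimensional Hilbert space, $\frac1n\sum_{k=1}^n b^k$ converges to the spectral projection of $b$ onto the eigenvalue $1$ when $\nm b\le 1$ and is unbounded when $\nm b>1$; in particular it tends to $0$ if and only if $\nm b<1$. Applied to $b_\alpha$ this reads $c_n(\alpha)\to 0\iff\nm{b_\alpha}=\nm{a_\alpha}^2<1\iff\nm{a_\alpha}<1$. For the implication $\nm{a_\alpha}<1\ (\alpha\neq 1)\Rightarrow$ the first Cesàro limit, Plancherel and the cancellation of the $\alpha=1$ term give
\[
\bnm{\tfrac1n\sum_{k=1}^n m_b^k x-h(x)1}_2^2=\sum_{\alpha\neq 1}d_\alpha\,\nm{\hat x(\alpha)\,c_n(\alpha)}_{\mathrm{HS}}^2 .
\]
Here $\nm{a_\alpha}\le 1$ for every $\alpha$ (strictly for $\alpha\neq 1$) forces the uniform bound $\nm{c_n(\alpha)}\le\frac1n\sum_{k=1}^n\nm{b_\alpha}^k\le 1$, so each summand is dominated by $d_\alpha\nm{\hat x(\alpha)}_{\mathrm{HS}}^2$, whose total is $\nm x_2^2<\infty$; since $c_n(\alpha)\to 0$ for each fixed $\alpha\neq 1$, dominated convergence over $\irr$ yields the limit. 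The same computation with $c_n(\alpha)\hat x(\alpha)$ in place of $\hat x(\alpha)c_n(\alpha)$ yields the statement for $m_b'$.

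For the converse I argue contrapositively. If some $\alpha_0\neq 1$ satisfies $\nm{a_{\alpha_0}}\ge 1$, then $\mu:=\nm{b_{\alpha_0}}\ge 1$, and for a unit eigenvector $\xi$ of $b_{\alpha_0}$ with eigenvalue $\mu$ one has $c_n(\alpha_0)\xi=\big(\frac1n\sum_{k=1}^n\mu^k\big)\xi$ of norm at least $1$. Using surjectivity of $\mathcal{F}$ from Proposition~\ref{prop:plancherel}(b), I choose $x$ with $\hat x(\alpha_0)=\xi\xi^*$ and $\hat x(\beta)=0$ for $\beta\neq\alpha_0$; then $h(x)=\hat x(1)=0$, yet $\nm{\frac1n\sum_{k=1}^n m_b^k x}_2^2=d_{\alpha_0}\nm{\xi\xi^*c_n(\alpha_0)}_{\mathrm{HS}}^2\ge d_{\alpha_0}>0$ for every $n$, so the first limit fails; the same $x$ defeats the limit for $m_b'$. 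Together with the previous paragraph this closes the three-fold equivalence.

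The delicate point is not the algebra but the passage from the pointwise-in-$\alpha$ decay $c_n(\alpha)\to 0$ to genuine $L_2$-norm convergence when $\mathbb{G}$ is infinite, since then $\irr$ is infinite and the tail of the series over $\alpha$ must be controlled. This is exactly what the uniform estimate $\nm{c_n(\alpha)}\le 1$ supplies, and it is available precisely because $\nm{a_\alpha}\le 1$ holds for \emph{all} $\alpha$ (with equality only at the trivial representation); I expect verifying this uniform bound, and hence the dominating-function hypothesis, to be the main thing to get right.
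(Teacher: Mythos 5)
Your proposal is correct and follows essentially the same route as the paper's proof: transfer everything to the Fourier side via Plancherel, use that the Ces\`{a}ro means of $b_\alpha^k$ vanish when $\nm{a_\alpha}<1$ while $b_1=1$ takes care of the trivial representation, and defeat the convergence with an $x$ whose Fourier transform is supported on a single $\alpha_0$ when $\nm{a_{\alpha_0}}\geq 1$. The only (harmless) differences are technical: you pass from pointwise decay in $\alpha$ to $L_2$-convergence by dominated convergence using the uniform bound $\nm{c_n(\alpha)}\leq 1$, where the paper first treats $x\in\mathrm{Pol}(\mathbb{G})$ and then invokes density together with the fact that the Ces\`{a}ro averages are $L_2$-contractions, and your counterexample also covers $\nm{a_{\alpha_0}}>1$, whereas the paper only writes out the case $\nm{a_{\alpha_0}}=1$.
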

\begin{proof}
Without loss of generality we only discuss the left multiplier $m_{a}$.
Assume that $\nm{a_{\a}}<1$ for all $\a\in\irr\setminus\set1$. By
the Plancherel theorem \ref{prop:plancherel} and the formula \eqref{left multiplier formula}
we note that $m_{b}$ extends to a bounded map of norm one on $L_{2}(\mathbb{G})$.
We first consider the case $x\in\mathrm{Pol}(\mathbb{G})$, so that
$\hat{x}$ is finitely supported. Let $\a\in\irr\setminus\set1$ and
$\nm{a_{\a}}<1$. Then 
\[
\nm{(\frac{1}{n}\sum_{k=1}^{n}m_{b}^{k}x)\,\hat{}\,(\a)}_{2}=\nm{\frac{1}{n}\sum_{k=1}^{n}\hat{x}(\a)(a_{\a}a_{\a}^{*})^{k}}_{2}\leq\frac{1}{n}\sum_{k=1}^{n}\nm{a_{\a}}^{2k}\nm{\hat{x}(\a)}_{2}\to0
\]
whenever $n\to\infty$. And for $\a=1$, 
\[
(\frac{1}{n}\sum_{k=1}^{n}m_{b}^{k}x)\,\hat{}\,(1)=\hat{x}(1)=h(x).
\]
Thus by the Plancherel theorem 
\[
\nm{\frac{1}{n}\sum_{k=1}^{n}m_{b}^{k}x-h(x)1}_{2}^{2}=\sum_{\a\neq1}d_{\a}\nm{(\frac{1}{n}\sum_{k=1}^{n}m_{b}^{k}x)\,\hat{}\,(\a)}_{2}^{2}\to0
\]
when $n\to\infty$. Since $\frac{1}{n}\sum_{k=1}^{n}m_{b}^{k}$ is a contraction on $L_2(\mathbb{G})$ and $\mathrm{Pol}(\mathbb{G})$ is dense in $L_2(\mathbb{G})$, we get the convergence $\lim_{n}\frac{1}{n}\sum_{k=1}^{n}m_{b}^{k}x=h(x)1$
for all $x\in L_2(\qg)$.

Conversely, if $\exists\a_{0}\in\irr\backslash\{1\}$, $\nm{a_{\a_{0}}}=1$,
then viewing $b_{\a_{0}}$ as a matrix in $\mathbb{M}_{n_{\a_{0}}}$,
we observe that $1\in\sigma(b_{\a_{0}})$ and there exists a nonzero
$x_{\a_{0}}\in\mathbb{M}_{n_{\a_{0}}}$ such that $x_{\a_{0}}b_{\a_{0}}=x_{\a_{0}}$.
Take $x\in L_2(\qg)$ such that $\hat{x}(1)=1$, $\hat{x}(\a_{0})=x_{\a_{0}}$,
$\hat{x}(\a)=0$ for $\a\in\irr\backslash\{1,\a_{0}\}$. Then $m_{b}x=x$
and hence $\frac{1}{n}\sum_{k=1}^{n}m_{b}^{k}x=x$ does not converge
to $h(x)1$. 
\end{proof}
\begin{rem}
In case the compact quantum group $\mathbb{G}$ is not of Kac type, the above argument still remains true for right multipliers. 
\end{rem}
The following first main result is now in reach. We will consider
the case where $\mathbb{G}$ is a finite quantum group. 
\begin{thm}
\label{thm:multiplier_quantum}Let $\mathbb{G}$ be a finite quantum
group. Suppose $a\in\ell_{\infty}(\hat{\qg})$ is such that $m_{a}$
(resp., $m_{a}'$) is a unital left (resp., right) multiplier on $C(\qg)$.
Then the following assertions are equivalent: 
\begin{enumerate}[label=\emph{(\arabic{enumi})}]
\item there exists $1\leq p<2$ such that, 
\[
\forall\ x\in C(\qg),\ \nm{m_{a}x}_{2}\leq\nm{x}_{p}\ ;
\]

\item there exists $1\leq p<2$ such that, 
\[
\forall\ x\in C(\qg),\ \nm{m_{a}'x}_{2}\leq\nm{x}_{p}\ ;
\]

\item $\nm{a_{\a}}<1$ for all $\a\in\irr\setminus\set1$ ; 
\item  $\lim_{n}\frac{1}{n}\sum_{k=1}^{n}m_{b}^{k}x=h(x)1$ for all $x\in C(\qg)$ when $b=aa^{*}$;
\item  $\lim_{n}\frac{1}{n}\sum_{k=1}^{n}m_{b}'^{k}x=h(x)1$ for all $x\in C(\qg)$ when $b=aa^{*}$.
\end{enumerate}
\end{thm}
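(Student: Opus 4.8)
The plan is to prove the cycle of equivalences by combining the spectral-gap characterization from Section \ref{sec:-improvement-and-spectral} with the multiplier analysis of the preceding lemma. The key structural fact is that for a finite quantum group the Haar state $h$ is a faithful tracial state on the finite-dimensional C{*}-algebra $C(\qg)$, so $m_a$ and $m_a'$ are candidate maps for Theorem \ref{Lp improvement_spectral gap_strong version}, provided we verify they are unital and trace preserving. Unitality is given by the hypothesis $a_1=1$; trace preservation follows from \eqref{eq:multiplier, Fourier coefficient}, since $h(m_a x)=\widehat{(m_a x)}(1)=\hat{x}(1)a_1=h(x)$ (and symmetrically for $m_a'$).

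First I would establish $(1)\Leftrightarrow(3)$ and $(2)\Leftrightarrow(3)$ using Theorem \ref{Lp improvement_spectral gap_strong version}. The content is to identify the spectral gap $\sup_{\tau(x)=0}\nm{m_a x}_2/\nm x_2<1$ with the condition $\nm{a_\a}<1$ for all nontrivial $\a$. By the Plancherel theorem \ref{prop:plancherel} the map $m_a$ acts on Fourier coefficients by $\hat{x}(\a)\mapsto\hat{x}(\a)a_\a$, and the subspace $\{\tau(x)=0\}$ is exactly the orthogonal complement of the trivial block $\a=1$. Hence $\nm{m_a}$ restricted to this subspace equals $\sup_{\a\neq 1}\nm{a_\a}$, where $\nm{a_\a}$ is the operator norm on $B(H_\a)$; the spectral gap holds precisely when this supremum is strictly less than $1$, i.e. when $\nm{a_\a}<1$ for every $\a\in\irr\setminus\{1\}$. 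The same computation applies verbatim to $m_a'$, which acts by left multiplication $\hat{x}(\a)\mapsto a_\a\hat{x}(\a)$, giving the identical spectral gap and hence $(2)\Leftrightarrow(3)$.

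Next I would treat $(3)\Leftrightarrow(4)$ and $(3)\Leftrightarrow(5)$, which is essentially immediate from the lemma preceding this theorem. That lemma, applied with $b=aa^*$ so that $b_\a=a_\a a_\a^*$ and $\nm{b_\a}=\nm{a_\a}^2$, already states that the Ces\`aro convergence $\frac1n\sum_{k=1}^n m_b^k x\to h(x)1$ in $L_2(\qg)$ for all $x$ holds if and only if $\nm{a_\a}<1$ for all nontrivial $\a$, and likewise for $m_b'$. The only gap to fill is that the theorem is stated for $x\in C(\qg)$ with norm convergence, rather than $L_2$-convergence; but since $\qg$ is finite, $C(\qg)$ is finite-dimensional and all the norms $\nm\cdot_p$ are equivalent, so $L_2$-convergence and convergence in $C(\qg)$ coincide. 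Thus the lemma transfers directly.

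The main obstacle, modest but worth care, is ensuring the hypotheses of Theorem \ref{Lp improvement_spectral gap_strong version} are genuinely met and that the identification of the restricted operator norm with $\sup_{\a\neq 1}\nm{a_\a}$ is correct. The subtlety is that the spectral gap in Theorem \ref{Lp improvement_spectral gap_strong version} is phrased via $\nm{T x}_2/\nm x_2$ on trace-zero elements, and one must confirm that the block-diagonal action on Fourier coefficients, together with the Plancherel isometry, lets us compute this supremum blockwise and reduce it to the operator norm $\nm{a_\a}$ on each matrix block $B(H_\a)$. Once this translation between the $L_2$-operator norm and the $\ell_\infty(\hat\qg)$-norms is made precise, all five equivalences close up, with $(3)$ serving as the common hub.
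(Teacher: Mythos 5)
Your proposal is correct and follows essentially the same route as the paper: the paper likewise identifies condition (3) with the spectral gap \eqref{eq:spetral gap} for $T=m_{a}$ via the Plancherel theorem (proving the converse by exhibiting, when some $\nm{a_{\a}}=1$, a trace-zero $x$ with $\nm{m_{a}x}_{2}=\nm{x}_{2}$), invokes Theorem \ref{Lp improvement_spectral gap_strong version} for (1)$\Leftrightarrow$(3), cites the preceding lemma for (3)$\Leftrightarrow$(4), and handles the right multiplier symmetrically. Your explicit verification of unitality and trace preservation, and of the equivalence of $L_{2}$- and uniform convergence in finite dimension, only spells out details the paper leaves implicit.
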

\begin{proof}
Without loss of generality we only discuss the left multiplier $m_{a}$
and prove the equivalence (1)$\Leftrightarrow$(3)$\Leftrightarrow$(4). 

It is easy to see from Plancherel's theorem that (3) is just \eqref{eq:spetral gap}
for $T=m_{a}$. In fact note that for $x\in C(\mathbb{G})$, $h(x)=0$
if and only if $\hat{x}(1)=0$, so (3) implies \eqref{eq:spetral gap}
via Plancherel's theorem. On the other hand, suppose by contradiction
that there exists $\a\in\irr\setminus\set1$ such that $\nm{a_{\a}}=1$.
By Proposition \ref{prop:plancherel} we may take a nonzero $x\in C(\qg)$
such that $\hat{x}(1)=0$, $\hat{x}(\a)=0$ when $\nm{a_{\a}}<1$,
and $\nm{\hat{x}(\a)a_{\a}}_{2}=\nm{\hat{x}(\a)}_{2}$ when $\nm{a_{\a}}=1$.
Then $\|m_{a}x\|_{2}=\|x\|_{2}$ with $h(x)=0$. As a consequence
the equivalence (1)$\Leftrightarrow$(3) follows from Theorem \ref{Lp improvement_spectral gap_strong version}.

The equivalence between (3) and (4) was proved in the previous lemma.
Therefore the theorem is established. 
\end{proof}

Now we turn to the corresponding convolution problems. Let $\f\in C(\mathbb{G})^{*}$ for a compact quantum group $\mathbb{G}$.
Recall the formula \eqref{fourier of conv of functional}, and
then we have 
\begin{equation}
\left[\f^{\star n}((u_{ji}^{\a})^{*})\right]=(\f^{\star n})\,\hat{}\,(\a)=\hat{\f}(\a)^{n}.\label{eq:Fourier coefficient of functional}
\end{equation}
Note that the convergence $\frac{1}{n}\sum_{k=1}^{n}m_{\hat{\f}^{*}}^{k}(x)\to h(x)1$
for all $x\in\mathrm{Pol}(\mathbb{G})$, by (\ref{eq:multiplier, Fourier coefficient})
can be reformulated in terms of Fourier coefficients as
\[
\left(\frac{1}{n}\sum_{k=1}^{n}m_{\hat{\f}^{*}}^{k}(x)\right)\,\hat{}\,(1)=h(x)\hat{\varphi}(1)^{*}=h(x)1,
\]
\[
\lim_{n}\left(\frac{1}{n}\sum_{k=1}^{n}m_{\hat{\f}^{*}}^{k}(x)\right)\,\hat{}\,(\alpha)=\lim_{n}\frac{1}{n}\sum_{k=1}^{n}(\hat{\varphi}(\alpha)^*)^{k}\hat{x}(\alpha)=0,\quad\alpha\in\irr\backslash\{1\}.
\]
This is to say,
\[
\hat{\varphi}(1)=1,\quad\lim_{n}\frac{1}{n}\sum_{k=1}^{n}\hat{\varphi}(\alpha)^{k}=0,\quad\alpha\in\irr\backslash\{1\},
\]
which, according to \eqref{eq:Fourier coefficient of functional},
is equivalent to $\frac{1}{n}\sum_{k=1}^{n}\f^{\star k}(u_{ij}^{\alpha})\to h(u_{ij}^{\alpha})$
for all $\alpha\in\irr$ and $1\leq i,j\leq n_{\alpha}$, or in other
words, 
\begin{equation*}
\frac{1}{n}\sum_{k=1}^{n}\f^{\star k}(x)\to h(x),\quad n\to\infty,\quad x\in\mathrm{Pol}(\mathbb{G}).
\end{equation*}

Any state $\f$ on $C(\qg)$ induces two convolution operators on
$C(\qg)$ 
\[
T_{\f}:C(\mathbb{G})\ni x\mapsto x\star\varphi=(\varphi\otimes\iota)\Delta(x),\quad T_{\varphi}':C(\mathbb{G})\ni x\mapsto\f\star x=(\iota\otimes\varphi)\Delta(x).
\]
If additionally $\mathbb{G}$ is of Kac type and the Haar state is faithful on $C(\mathbb{G})$, then by Proposition \ref{prop:kac} the antipode $S$ extends to a positive linear operator on $C(\mathbb{G})$ and $S=S^{-1}$, and hence $\varphi\circ S=\varphi\circ S^{-1}$ is also a state. In this case we have
\begin{equation*}
\left[\,\overline{\f(u_{ji}^{\a})}\,\right]_{i,j}=\hat{\f}(\a),\quad(\f\circ S)\,\hat{}\,(\a)=\left[\,\f(u_{ij}^{\a})\,\right]_{i,j}=\hat{\f}(\a)^{*}
\end{equation*}
and by \eqref{eq:fourier series for convolution} we have $(x\star\varphi)\,\hat{}\,(\a)=\hat{x}(\a)\hat{\f}(\a)^{*}$
and $(\f\star x)\,\hat{}\,(\a)=\hat{\f}(\a)^{*}\hat{x}(\a)$ for $\a\in\irr$, $x\in C(\mathbb{G})$.
So $T_{\f}=m_{\hat{\f}^{*}}$ and $T_{\varphi}'=m_{\hat{\varphi}^{*}}'$
are unital completely positive left and right multipliers, respectively. 

Now with these remarks and Lemma \ref{lem:non degenerate} in hand,
we may reformulate Theorem \ref{thm:multiplier_quantum} in terms of
convolution operators using the above arguments. 
\begin{thm}
\label{cor:convolution_quantul}Let $\mathbb{G}$ be a finite quantum
group and $\f$ be a state on $C(\qg)$. Denote $\psi=(\f\circ S)\star\f$.
The following assertions are equivalent: 
\begin{enumerate}[label=\emph{(\arabic{enumi})}]
\item there exists $1\leq p<2$ such that, 
\[
\forall\ x\in C(\qg),\ \nm{\f\star x}_{2}\leq\nm{x}_{p}\ ;
\]

\item there exists $1\leq p<2$ such that, 
\[
\forall\ x\in C(\qg),\ \nm{x\star\varphi}_{2}\leq\nm{x}_{p}\ ;
\]

\item $\nm{\hat{\f}(\a)}<1$ for all $\a\in\irr\setminus\set1$ ; 
\item $\lim_{n}\frac{1}{n}\sum_{k=1}^{n}\psi^{\star k}=h$; 
\item For any nonzero $x\in C(\qg)_{+}$, there exists $n\geq1$ such that
$\psi^{\star n}(x)>0$. 
\end{enumerate}
\end{thm}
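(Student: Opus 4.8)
The plan is to translate the four assertions into statements about the multipliers and functionals already analyzed, and then invoke Theorem \ref{thm:multiplier_quantum} together with Lemma \ref{lem:non degenerate}. First I would fix $a=\hat{\varphi}^{*}\in\ell_{\infty}(\hat{\qg})$, that is, $a_{\alpha}=\hat{\varphi}(\alpha)^{*}$. Since $\varphi$ is a state, $\hat{\varphi}(1)=\varphi(1)=1$, so $a_{1}=1$ and the associated multipliers are unital; moreover, as recalled just before the statement, $\varphi\star x=T_{\varphi}'(x)=m_{a}'(x)$ and $x\star\varphi=T_{\varphi}(x)=m_{a}(x)$, both unital completely positive (hence $2$-positive) trace preserving maps on $C(\qg)$. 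With this dictionary, assertion (1) is exactly assertion (2) of Theorem \ref{thm:multiplier_quantum} for this $a$, assertion (2) is its assertion (1), and since $\nm{a_{\alpha}}=\nm{\hat{\varphi}(\alpha)^{*}}=\nm{\hat{\varphi}(\alpha)}$, assertion (3) here is precisely assertion (3) there. Hence the equivalences (1)$\Leftrightarrow$(2)$\Leftrightarrow$(3) are immediate from Theorem \ref{thm:multiplier_quantum}.

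Next, for (3)$\Leftrightarrow$(4) I would compute the Fourier transform of $\psi$. By \eqref{fourier of conv of functional} together with $(\varphi\circ S)\,\hat{}\,(\alpha)=\hat{\varphi}(\alpha)^{*}$ one gets $\hat{\psi}(\alpha)=\hat{\varphi}(\alpha)\hat{\varphi}(\alpha)^{*}$, a positive semidefinite matrix, and by \eqref{eq:Fourier coefficient of functional} $(\psi^{\star k})\,\hat{}\,(\alpha)=\hat{\psi}(\alpha)^{k}$. Testing the weak-$*$ limit in (4) against the matrix coefficients $u_{ij}^{\alpha}$ — which is legitimate because $C(\qg)$ is finite dimensional — reduces (4) to the requirement that $\frac{1}{n}\sum_{k=1}^{n}\hat{\psi}(\alpha)^{k}\to0$ for every $\alpha\in\irr\setminus\set1$. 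The key observation is that $\nm{\hat{\varphi}(\alpha)}\leq1$ always holds, since $(u^{\alpha})^{*}$ is a unitary in $\mathbb{M}_{n_{\alpha}}(C(\qg))$ and the slice map $\varphi\otimes\iota$ is a unital positive (hence contractive) map; thus $\hat{\psi}(\alpha)$ is a positive contraction, its Ces\`{a}ro averages converge to the spectral projection onto its eigenvalue-$1$ eigenspace, and this projection vanishes precisely when $\nm{\hat{\psi}(\alpha)}=\nm{\hat{\varphi}(\alpha)}^{2}<1$. This yields (3)$\Leftrightarrow$(4).

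Finally, (4)$\Leftrightarrow$(5) is a direct application of Lemma \ref{lem:non degenerate}. I would first note that $\psi$ is genuinely a state: since $\mathbb{G}$ is finite it is of Kac type with faithful Haar state, so by Proposition \ref{prop:kac} the antipode $S$ extends to a positive unital map and $\varphi\circ S$ is a state; then $\psi=(\varphi\circ S)\star\varphi=((\varphi\circ S)\otimes\varphi)\circ\Delta$ is the composition of the state $(\varphi\circ S)\otimes\varphi$ on $C(\qg)\otimes C(\qg)$ with the unital $*$-homomorphism $\Delta$, hence a state. Assertion (5) is exactly non-degeneracy of $\psi$, and since $h$ is faithful on the finite dimensional $C(\qg)$, Lemma \ref{lem:non degenerate} gives the equivalence with the Ces\`{a}ro convergence $\frac{1}{n}\sum_{k=1}^{n}\psi^{\star k}\to h$, which is (4). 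Stringing these together completes the chain of equivalences.

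The main obstacle I anticipate is bookkeeping rather than conceptual: one must keep the left/right multiplier identifications and the order of convolution straight (so that $\hat{\psi}(\alpha)=\hat{\varphi}(\alpha)\hat{\varphi}(\alpha)^{*}$ and not $\hat{\varphi}(\alpha)^{*}\hat{\varphi}(\alpha)$, though the two have the same norm and eigenvalue-$1$ behavior), and verify the harmless-looking but essential contraction bound $\nm{\hat{\varphi}(\alpha)}\leq1$ that makes the Ces\`{a}ro/spectral step of (3)$\Leftrightarrow$(4) valid.
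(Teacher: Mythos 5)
Your proposal is correct and follows essentially the same route as the paper's own proof: the dictionary $T_{\varphi}=m_{\hat{\varphi}^{*}}$, $T_{\varphi}'=m_{\hat{\varphi}^{*}}'$ combined with Theorem \ref{thm:multiplier_quantum} for (1)$\Leftrightarrow$(2)$\Leftrightarrow$(3), a Fourier-side Ces\`{a}ro/spectral argument for (3)$\Leftrightarrow$(4), and Lemma \ref{lem:non degenerate} for (4)$\Leftrightarrow$(5). The only cosmetic difference is that you handle (3)$\Leftrightarrow$(4) by working directly with the positive contractions $\hat{\psi}(\alpha)=\hat{\varphi}(\alpha)\hat{\varphi}(\alpha)^{*}$, whereas the paper routes it through conditions (4)--(5) of Theorem \ref{thm:multiplier_quantum} (Ces\`{a}ro convergence of powers of $m_{b}$ with $b=aa^{*}$) together with its reformulation in terms of convolution powers of functionals; the underlying computation is identical, and your version even keeps the convolution order in $\psi=(\varphi\circ S)\star\varphi$ straight, which the multiplier route only matches up to the harmless exchange of $AA^{*}$ and $A^{*}A$ that you correctly flag.
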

Note that if $C(\qg)$ is commutative, i.e. $C(\qg)=C(G)$ where $G$
is a finite group, then $\f\in C(G)^{*}$ corresponds to
a Radon measure $\mu$ via the Riesz representation theorem. The above
condition (5) in the theorem just asserts that $G$ is the union of
$D_{n}\coloneqq\text{supp}\,(\nu^{\star n})$, $n\geq1$, where $\nu$
denotes the Radon measure corresponding to $\psi$. It is easy to
see that $D_{1}=\set{i^{-1}j:i,j\in\text{supp}\,(\mu)}$ and $D_{n}=D_{1}^{n}$.
So the above corollary covers Ritter's result \cite{ritter1984convolution}. 
\begin{cor}
\label{cor:classical ritter's thm}Let $G$ be a finite group and
$\mu$ be a probability measure on $G$. Then there is a $1\leq p<2$
such that 
\[
\nm{x\star\mu}_{2}\leq\nm x_{p},\quad x\in L_{p}(G)
\]
if and only if $G$ is equal to the subgroup generated by $\set{i^{-1}j:i,j\in\mathrm{supp}\,(\mu)}$. 
\end{cor}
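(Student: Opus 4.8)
The plan is to specialize Theorem \ref{cor:convolution_quantul} to the commutative case. Taking $C(\mathbb{G})=C(G)$ with $G$ a finite group makes $\mathbb{G}$ a finite quantum group whose Haar state $h$ is the normalized counting measure, so the $L_p(\mathbb{G})$-norms coincide with the $L_p(G)$-norms in the corollary. Under the Riesz correspondence a state $\varphi$ on $C(G)$ is a probability measure $\mu$, and the hypothesis $\nm{x\star\mu}_2\le\nm{x}_p$ is precisely assertion (2) of Theorem \ref{cor:convolution_quantul}. Hence it suffices to show that assertion (5) of that theorem---the non-degeneracy of $\psi=(\varphi\circ S)\star\varphi$---is equivalent to $G=\langle\{i^{-1}j:i,j\in\mathrm{supp}\,\mu\}\rangle$.

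First I would compute $\psi$ explicitly. On $C(G)$ the antipode is the inversion $S(f)(g)=f(g^{-1})$, so $\varphi\circ S$ corresponds to the reflected measure $\check\mu$ with $\check\mu(E)=\mu(E^{-1})$, and by the definition of the convolution product $\psi$ corresponds to $\nu=\check\mu\star\mu$. More generally $\psi^{\star n}$ corresponds to $\nu^{\star n}$, and since supports of convolutions of measures on a finite group multiply, its support is $D_n=D_1^n$, where $D_1=\mathrm{supp}(\nu)=(\mathrm{supp}\,\mu)^{-1}\cdot(\mathrm{supp}\,\mu)=\{i^{-1}j:i,j\in\mathrm{supp}\,\mu\}$.

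Next I would translate assertion (5). For a positive function $x\in C(G)_+$ one has $\psi^{\star n}(x)=\sum_g x(g)\,\nu^{\star n}(\{g\})$, which is strictly positive exactly when $\mathrm{supp}(x)\cap D_n\neq\emptyset$. Testing against the indicator of a single point $g$ shows that assertion (5) forces each $g\in G$ to lie in some $D_n$; conversely, if every point of $G$ lies in some $D_n$, then for any nonzero $x\in C(G)_+$ choosing $g_0$ with $x(g_0)>0$ gives $\psi^{\star n}(x)\ge x(g_0)\,\nu^{\star n}(\{g_0\})>0$. Thus (5) is equivalent to $G=\bigcup_{n\ge1}D_n$.

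Finally I would identify this union with the subgroup generated by $D_1$. Since $i=j$ is permitted, $e\in D_1$, whence the sets $D_n=D_1^n$ increase with $n$; and $D_1$ is symmetric because $(i^{-1}j)^{-1}=j^{-1}i\in D_1$. Therefore $\bigcup_n D_1^n$ is closed under products and inverses and contains $e$, so it is exactly the subgroup $\langle D_1\rangle$. Combining the steps, assertion (5) holds if and only if $G=\langle\{i^{-1}j:i,j\in\mathrm{supp}\,\mu\}\rangle$, and by Theorem \ref{cor:convolution_quantul} this is equivalent to the $L_p$-improving property, which proves the corollary. The only mildly delicate points are checking that convolution supports multiply and that testing (5) on point masses suffices; there is no genuine obstacle, since all of the analytic content is carried by Theorem \ref{cor:convolution_quantul}.
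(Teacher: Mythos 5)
Your proposal is correct and follows essentially the same route as the paper: the paper also deduces the corollary by specializing Theorem \ref{cor:convolution_quantul} to the commutative case, identifying condition (5) with the statement that $G=\bigcup_{n}D_{1}^{n}$ for $D_{1}=\{i^{-1}j:i,j\in\mathrm{supp}\,\mu\}$. The only difference is that the paper states these translations (antipode as inversion, multiplicativity of supports, $\bigcup_n D_1^n=\langle D_1\rangle$) without proof, whereas you supply the verifications.
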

On the other hand, let $\Gamma$ be a finite group with
neutral element $e$ and $C^{*}(\Gamma)$ be the associated group
C{*}-algebra generated by $\lambda(\Gamma)\subset B(\ell_{2}(\Gamma))$,
where $\lambda$ denotes the left regular representation. Recall that if $\mathbb{G}=\hat{\Gamma}$,
then $C(\mathbb{G})$ is the group C{*}-algebra $C^{*}(\Gamma)$ equipped
with the comultiplication $\Delta:C^{*}(\Gamma)\to C^{*}(\Gamma)\otimes C^{*}(\Gamma)$
defined by
\[
\Delta(\lambda(\gamma))=\lambda(\gamma)\otimes\lambda(\gamma),\quad\gamma\in\Gamma.
\]
Note that any state $\Phi$ on $C(\mathbb{G})$ corresponds to a positive
definite function $\varphi$ on $\Gamma$ with $\varphi(e)=1$ via the relation $\Phi(\lambda(\gamma))=\varphi(\gamma)$
for all $\gamma\in\Gamma$. Therefore we have
\[
\Phi\star\lambda(\gamma)=\lambda(\gamma)\star\Phi=(\Phi\otimes\iota)\Delta(\lambda(\gamma))=(\Phi\otimes\iota)(\lambda(\gamma)\otimes\lambda(\gamma))=\varphi(\gamma)\lambda(\gamma),
\]
so the convolution operators associated to $\Phi$ are just the Fourier-Schur
multiplier on $\Gamma$ associated to $\varphi$. Our preceding argument
in particular yields the following result extending
\cite[Theorem 2(a)]{ritter1984convolution}.
\begin{cor}
\label{cor:cocommutative case_schur}Let $\Gamma$ be a finite group
and $\varphi$ be a positive definite function on $\Gamma$ with $\varphi(e)=1$. Let $M_{\varphi}$
be the associated Fourier-Schur multiplier operator determined by
$M_{\varphi}(\lambda(\gamma))=\varphi(\gamma)\lambda(\gamma)$ for
all $\gamma\in\Gamma$. Then there exists $1\leq p<2$ such that
\[
\|M_{\varphi}x\|_{2}\leq\|x\|_{p},\quad x\in C^{*}(\Gamma)
\]
if and only if $|\varphi(\gamma)|<1$ for any $\gamma\in\Gamma\setminus\{e\}$.\end{cor}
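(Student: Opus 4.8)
The plan is to recognize this corollary as a direct specialization of Theorem \ref{cor:convolution_quantul} to the cocommutative finite quantum group $\mathbb{G}=\hat{\Gamma}$. Since $\Gamma$ is finite, $C(\mathbb{G})=C^{*}(\Gamma)$ is finite-dimensional, so $\mathbb{G}$ is a finite quantum group; its Haar state is the canonical trace $\tau$ with $\tau(\lambda(\gamma))=0$ for $\gamma\neq e$, which is faithful and tracial, so $\mathbb{G}$ is of Kac type with faithful Haar state. This places us squarely within the hypotheses of Theorem \ref{cor:convolution_quantul}, and essentially the entire argument is then a bookkeeping translation.

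First I would translate the data. The state $\Phi$ on $C(\mathbb{G})$ determined by $\Phi(\lambda(\gamma))=\varphi(\gamma)$ is exactly the state whose associated convolution operators recover $M_{\varphi}$: as already observed in the text preceding the statement, $\Phi\star\lambda(\gamma)=\lambda(\gamma)\star\Phi=\varphi(\gamma)\lambda(\gamma)$, whence $T_{\Phi}=T_{\Phi}'=M_{\varphi}$. Consequently assertion (1) of Theorem \ref{cor:convolution_quantul} applied to $\Phi$ reads precisely as the existence of $1\leq p<2$ with $\|M_{\varphi}x\|_{2}\leq\|x\|_{p}$ for all $x\in C^{*}(\Gamma)$, which is the left-hand side of the claimed equivalence.

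Next I would compute the Fourier transform. The irreducible unitary representations of $\hat{\Gamma}$ are the one-dimensional representations $u^{\gamma}=\lambda(\gamma)$ indexed by $\gamma\in\Gamma$, with the trivial representation corresponding to $\gamma=e$. By definition \eqref{eq:fourier def}, $\hat{\Phi}(\gamma)=(\Phi\otimes\iota)((u^{\gamma})^{*})=\Phi(\lambda(\gamma^{-1}))=\varphi(\gamma^{-1})=\overline{\varphi(\gamma)}$, where the last equality uses positive definiteness of $\varphi$. Since the operator norm on a one-dimensional space is the modulus, $\|\hat{\Phi}(\gamma)\|=|\varphi(\gamma)|$. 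Under this dictionary, assertion (3) of Theorem \ref{cor:convolution_quantul}, namely $\|\hat{\Phi}(\alpha)\|<1$ for all nontrivial $\alpha\in\irr$, becomes exactly $|\varphi(\gamma)|<1$ for all $\gamma\in\Gamma\setminus\{e\}$.

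Finally, invoking the equivalence (1)$\Leftrightarrow$(3) of Theorem \ref{cor:convolution_quantul} yields the corollary at once. The hard part, such as it is, lies entirely upstream in Theorem \ref{cor:convolution_quantul} (and ultimately in Theorem \ref{Lp improvement_spectral gap_strong version}); at the level of this corollary there is no genuine obstacle, and the only point requiring care is the correct identification of $\irr(\hat{\Gamma})$ with $\Gamma$, with $e$ labeling the trivial representation, together with the ensuing identity $\|\hat{\Phi}(\gamma)\|=|\varphi(\gamma)|$.
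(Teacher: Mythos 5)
Your proposal is correct and follows essentially the same route as the paper: the text immediately preceding the corollary performs exactly this specialization of Theorem \ref{cor:convolution_quantul} to $\mathbb{G}=\hat{\Gamma}$, identifying the convolution operators of the state $\Phi$ (with $\Phi(\lambda(\gamma))=\varphi(\gamma)$) with $M_{\varphi}$ and the Fourier coefficients $\hat{\Phi}(\gamma)$ with $\overline{\varphi(\gamma)}$, so that condition (3) becomes $|\varphi(\gamma)|<1$ for $\gamma\neq e$. Your bookkeeping, including the one-dimensionality of the irreducible representations of $\hat{\Gamma}$ and the use of $\varphi(\gamma^{-1})=\overline{\varphi(\gamma)}$, matches the paper's argument.
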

\begin{rem}
\label{rem:finiteness is crucial}We remark that the finite-dimensional
condition cannot be removed in any of our results, including Theorem
\ref{Lp improvement_spectral gap}, Theorem \ref{thm:multiplier_quantum},
Corollary \ref{cor:convolution_quantul}-\ref{cor:cocommutative case_schur}.
Here we give a counterexample illustrating this. Let $\mathbb{T}$
be the unit circle in the complex plane, then $\mathbb{T}$ gives
an infinite compact quantum group. Define an operator $T:C(\mathbb{T})\to C(\mathbb{T})$
by 
\[
T(f)=(1-\lambda)\tau(f)+\lambda f,\quad f\in C(\mathbb{T}),
\]
where $0<\lambda<1$ and $\tau$ denote the usual integral against
the normalized Lebesgue measure on $\mathbb{T}$. Then $T$ is obviously
unital completely positive since so are $\tau$ and the identity map.
It is a left multiplier satisfying $T(f)\,\hat{}\,(0)=\hat{f}(0)$
and $T(f)\,\hat{}\,(n)=\lambda\hat{f}(n)$ for $n\neq0$. Also we
may view $T$ as a convolution operator associated to the state $f\mapsto(1-\lambda)\tau(f)+\lambda f(1)$
on $C(\mathbb{T})$, which is faithful since $\tau$ is faithful.
Note that $T$ admits the spectral gap inequality \eqref{eq:spetral gap}
as well, and in fact, $\|Tf\|_{2}=\lambda\|f\|_{2}<\|f\|_{2}$ for
any $f\in C(\mathbb{T})$ with $\tau(f)=0$. However, there doesn't
exist any $p<2$ such that $\|Tf\|_{2}\leq\|f\|_{p}$ for all $f\in C(\mathbb{T})$.
Indeed if such a $p$ existed, then for any $f\in C(\mathbb{T})$,
we would have 
\begin{align*}
\|f\|_{2}^{2} & \geq\|f\|_{p}^{2}\geq\|Tf\|_{2}^{2}=\tau(f)^{2}+\lambda^{2}\|f-\tau(f)\|_{2}^{2}\\
 & \geq\lambda^{2}(\tau(f)^{2}+\|f-\tau(f)\|_{2}^{2})=\lambda^{2}\|f\|_{2}^{2},
\end{align*}
which yields an impossible equivalence between the norms $\|\cdot\|_{2}$
and $\|\cdot\|_{p}$. 
\end{rem}
In spite of the above general remark, Theorem \ref{thm:free prod Lp improving}
still gives constructions of $L_{p}$-improving positive convolution
operators on infinite compact quantum groups. Let $\mathbb{G}_{1},\ldots,\mathbb{G}_{n}$
be finite quantum groups with Haar states $h_1,\ldots,h_n$ respectively and let each $\varphi_{i}$ be a state on
$C(\mathbb{G}_{i})$, $i\in\{1,\ldots,n\}$. Denote $\mathbb{G}=\mathbb{G}_{1}\hat{*}\cdots\hat{*}\,\mathbb{G}_{n}$
with the Haar state $h$ and consider the convolution operators $T_{i}:x\mapsto x\star\varphi_{i},\, x\in C(\mathbb{G}_{i})$.
Note that the free product map $T=*_{1\leq i\leq n}T_{i}$ on $C(\mathbb{G})$ is just the convolution operator given by the c-free product state $\varphi=*_{(h_1,\ldots,h_n)}\varphi_{i}$, i.e.,
\begin{equation}
T(x)=(\varphi\otimes\iota)\Delta(x),\quad x\in C(\mathbb{G}).\label{eq:free product of convolution}
\end{equation}
In fact, we note that if $h(a)=0$, then $h(a_{(1)})=h(a_{(2)})=0$
by \eqref{eq:comultiplication}, where $\Delta(a)\coloneqq\sum a_{(1)}\otimes a_{(2)}$
denotes the Sweedler notation. Now for a reduced word $x=x^{1}\cdots x^{m}$
with $x^{k}\in C(\mathbb{G}_{i_{k}})$ such that $h(x^{k})=0$, $i_{1}\neq\cdots\neq i_{n}$,
$i_{k}\in\{1,\ldots,n\}$ for each $k=1,\ldots,m$,, we have
\begin{align*}
T(x) & =T_{i_{1}}(x^{1})\cdots T_{i_{m}}(x^{m})=\sum\varphi_{i_{1}}(x_{(1)}^{1})x_{(2)}^{1}\cdots\sum\varphi_{i_{m}}(x_{(1)}^{m})x_{(2)}^{m}\\
 & =\sum\varphi(x_{(1)})x_{(2)}=(\varphi\otimes\iota)\Delta(x)
\end{align*}
where we have used the fact that the comultiplication $\Delta$ is
an homomorphism. Then the equality \eqref{eq:free product of convolution}
follows from a standard density argument. Now taking in Theorem \ref{thm:free prod Lp improving}
each $T_{i}$ to be a convolution operator on a finite quantum group,
we get the following corollary:
\begin{cor}
\label{cor:free prod conv}Let $\mathbb{G}_{1},\ldots,\mathbb{G}_{n}$
be finite quantum groups and let each $\varphi_{i}$ be a state on
$C(\mathbb{G}_{i})$, $i\in\{1,\ldots,n\}$. Denote $\mathbb{G}=\mathbb{G}_{1}\hat{*}\cdots\hat{*}\,\mathbb{G}_{n}$
and $\varphi=*_{(h_1,\ldots,h_n)}\varphi_{i}$. If each $\varphi_{i}$
satisfies any one of the conditions \emph{(1)-(5)} in Corollary \ref{cor:convolution_quantul},
then the free product convolution operator given by $T:x\mapsto x\star\varphi,\, x\in C(\mathbb{G})$
is a unital left multiplier on $\mathbb{G}$ satisfies 
\[
\|T:L_{p}(\mathbb{G})\to L_{2}(\mathbb{G})\|=1
\]
for a certain $1<p<2$.\end{cor}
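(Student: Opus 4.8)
The plan is to realize the free product convolution operator $T\colon x\mapsto x\star\varphi$ as the free product map $*_{1\leq i\leq n}T_i$ of the individual convolution operators $T_i\colon x\mapsto x\star\varphi_i$ on $C(\mathbb{G}_i)$, and then to feed these into Theorem \ref{thm:free prod Lp improving}. The identification of $*_i T_i$ with $T=T_\varphi$ is exactly the content of \eqref{eq:free product of convolution}, established just above, so the real work is only to check that each $T_i$ satisfies the hypotheses of Theorem \ref{thm:free prod Lp improving}.

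First I would verify that each $T_i$ is a unital trace preserving map on $(C(\mathbb{G}_i),h_i)$. Unitality is immediate, since $\Delta(1)=1\otimes1$ and $\varphi_i$ is a state give $T_i(1)=\varphi_i(1)1=1$. Trace preservation follows from the invariance of the Haar state: for $x\in C(\mathbb{G}_i)$,
\[
h_i(T_i(x))=(\varphi_i\otimes h_i)\Delta(x)=\varphi_i\big((\iota\otimes h_i)\Delta(x)\big)=\varphi_i(h_i(x)1)=h_i(x).
\]
Since each $\mathbb{G}_i$ is finite and hence of Kac type, each $h_i$ is tracial, so the free product state $h=*_i h_i$ on $C(\mathbb{G})$ is again tracial; thus the von Neumann algebraic free product $L_\infty(\mathbb{G})=\bar{*}_i L_\infty(\mathbb{G}_i)$ carries a faithful tracial state and the spaces $L_p(\mathbb{G})$ are well defined, as required to even state the conclusion.

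Next, the hypothesis that each $\varphi_i$ satisfies one of the equivalent conditions (1)--(5) of Corollary \ref{cor:convolution_quantul} gives, through condition (3) and Theorem \ref{Lp improvement_spectral gap_strong version}, a spectral gap for $T_i$, and hence some $1<p_i<2$ with $\|T_ix\|_2\leq\|x\|_{p_i}$; since $T_i 1=1$ this is precisely $\|T_i\colon L_{p_i}(\mathbb{G}_i)\to L_2(\mathbb{G}_i)\|=1$. Because the $L_p$-norms increase with $p$ in a tracial probability space, the property for $p_i$ persists for every exponent in $[p_i,2)$, so setting $p=\max_i p_i<2$ furnishes a single exponent valid for all the $T_i$ simultaneously. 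Applying Theorem \ref{thm:free prod Lp improving} to the family $(T_i)_{i}$ then produces some $1<p'<2$ with $\|*_iT_i\colon L_{p'}(\mathbb{G})\to L_2(\mathbb{G})\|=1$, and combining this with \eqref{eq:free product of convolution}, which says $*_iT_i=T=T_\varphi$, yields $\|T\colon L_{p'}(\mathbb{G})\to L_2(\mathbb{G})\|=1$; finally $T=m_{\hat{\varphi}^{*}}$ is a unital left multiplier because $\varphi$ is a state, so $\hat{\varphi}(1)=1$.

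There is no genuinely hard step here: the statement is an assembly of the machinery already built. The only points demanding a little care are the uniformization of the exponent $p$ across the finitely many factors before invoking Theorem \ref{thm:free prod Lp improving} (handled by monotonicity of $L_p$-norms, or equally well by noting the theorem only uses the $p$-independent spectral gap $\lambda=\max_i\lambda_i<1$), and the bookkeeping ensuring that the abstract free product map delivered by that theorem is literally the concrete convolution operator $T_\varphi$ via \eqref{eq:free product of convolution} rather than merely some extension of it.
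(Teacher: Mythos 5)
Your proposal is correct and follows essentially the same route as the paper: the paper also identifies $T$ with the free product map $*_{1\leq i\leq n}T_i$ via \eqref{eq:free product of convolution} and then invokes Theorem \ref{thm:free prod Lp improving}, using that the hypotheses on each $\varphi_i$ give $\|T_i:L_{p_i}(\mathbb{G}_i)\to L_2(\mathbb{G}_i)\|=1$. Your added care about uniformizing the exponent (or, better, observing that Theorem \ref{thm:free prod Lp improving} only uses the $p$-independent spectral gap $\lambda<1$) and the verification that each $T_i$ is unital and trace preserving are exactly the details the paper leaves implicit.
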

\begin{example}
Now we give a simple method to create nontrivial $L_{p}$-improving positive
convolutions (i.e. the associated state is different from the Haar state) on finite and infinite compact quantum groups. Let $\mathbb{G}$
be a finite quantum group and $h$ the Haar state on it. Given any
state $\varphi$ on $C(\mathbb{G})$ and any $0<\lambda<1$, we can
define a state $\rho$ on $C(\mathbb G)$ by
\[
\rho=\lambda\varphi+(1-\lambda)h.
\]
This is a faithful state which is in particular non-degenerate, 
and hence by Theorem \ref{cor:convolution_quantul} the convolution operator $T_{\rho}:x\mapsto x\star\rho,\, x\in C(\mathbb{G})$
satisfies 
\[
\|T_{\rho}:L_{p}(\mathbb{G})\to L_{2}(\mathbb{G})\|=1
\]
for a certain $1<p<2$ according to Theorem \ref{cor:convolution_quantul}.
Moreover by Corollary \ref{cor:free prod conv}, the convolution operator
$T_{\rho'}:x\mapsto x\star\rho',\, x\in C(\mathbb{G}\hat{*}\mathbb{G})$
given by the c-free product state $\rho'=\rho*_{(h,h)}\rho$ satisfies 
\[
\|T_{\rho'}:L_{p'}(\mathbb{G}\hat{*}\mathbb{G})\to L_{2}(\mathbb{G}\hat{*}\mathbb{G})\|=1
\]
for some $1<p'<2$.
\end{example}

\subsection*{Acknowledgment}
 The author is indebted to his advisors Quanhua Xu and
Adam Skalski for their helpful discussions and constant encouragement, and to
Professor Gilles Pisier for his careful reading and pointing out a mistake in the
preprint version. The author also thanks the referee for a careful reading of the
manuscript and useful suggestions. This research was partially supported by the
NCN (National Centre of Science), grant no. 2014/14/E/ST1/00525.

\end{document}